\DeclareMathOperator{\arccosh}{arccosh}
\newtheorem{defi}{Definition}[section]
\newtheorem{theorem}{Theorem}[section]
\newtheorem{prop}{Proposition}[section]
\newtheorem{corollary}{Corollary}[section]
\newtheorem{remark}{\textbf{Remark}}[section]
\def\rr{\mathbb{R}}
\def\ss{\mathbb{S}}
\def\hh{\mathbb{H}}
\def\bb{\mathbb{B}}
\def\tr{\mathrm{tr}}
\def\p{\partial}
\def\a{\alpha}
\def\d{\delta}
\def\th{\theta}
\def\l{\lambda}
\def\p{\partial}
\def\<{\langle}
\def\>{\rangle}
\def\div{{\rm div}}
\def\n{\nabla}
\def\De{\Delta}
\def\vp{\varphi}
\def\R{{\mathbb R}}
\def\ep{\epsilon}
\numberwithin{equation} {section}
\begin{document}

\title[Stability for a second type partitioning problem]{Stability for a second type partitioning problem}
\author{Jinyu Guo}
\address{School of Mathematical Sciences\\
Xiamen University\\
361005, Xiamen, P.R. China}
\email{guojinyu14@163.com}
\author{Chao Xia}
\address{School of Mathematical Sciences\\
Xiamen University\\
361005, Xiamen, P.R. China}
\email{chaoxia@xmu.edu.cn}
\thanks{This work is  supported by NSFC (Grant No. 11871406),  the Natural Science Foundation of Fujian Province of China (Grant No. 2017J06003) and the Fundamental Research Funds for the Central Universities (Grant No. 20720180009).
}

\begin{abstract}
In this paper, we study stability and instability problem for type-II partitioning problem. First, we make a complete classification of stable type-II stationary hypersurfaces in a ball in a space form as totally geodesic $n$-balls. Second, for general ambient spaces and convex domains, we give some topological restriction for type-II stable stationary immersed surfaces in two dimension. Third, we give a lower bound for the Morse index for type-II stationary hypersurfaces in terms of their topology.\\

\noindent {\bf MSC 2010:} 53A10, 53C24, 	53C40. \\
{\bf Keywords:}  Partitioning problem, Minimal surfaces, Capillary surfaces, Stability, Rigidity. \\

\end{abstract}

\date{}
\maketitle

\medskip

\tableofcontents

\section{Introduction}
Let $B\subset \rr^{n+1}$ be a convex body (compact convex set with non-empty interior). We look at hypersurfaces in $B$ which divides $B$ into to two disjoint domains $B_1$ and $B_2$ in different manners.
In the literature the following two types of partitioning problems have been considered.

\smallskip

\noindent{\bf Type-I partitioning problem.} Find the area-minimizing hypersurfaces among all hypersurfaces in $B$ which divides $B$ into two disjoint domains $B_1$ and $B_2$ with prescribed volume, namely, $$|B_1|=s|B|\hbox{ and }|B_2|=(1-s)|B|, \hbox{ for some }s\in (0,1).$$

\noindent {\bf Type-II partitioning problem.} Find the area-minimizing hypersurfaces among all such hypersurfaces in $B$ which divides $B$ into two disjoint domains $B_1$ and $B_2$ with prescribed wetting boundary area\footnote{wetting boundary means for the boundary part on $\p B$. The word ``wetting" comes from the physical model of capillary surfaces.}, namely,
$$|B_1\cap \p B|=s|\p B|\hbox{ and }|B_2\cap \p B|=(1-s)|\p B|, \hbox{ for some }s\in (0,1).$$

\smallskip

These two problems haven been intensively studied by Burago-Maz'ya in late 60s \cite{BM}  in the case $B=\bar \bb^{n+1}$, the unit ball. By using spherical symmetrization, they showed that the solution for Type-I partitioning problem is totally geodesic $n$-ball and all spherical caps intersecting $\ss^n (=\p \bb^{n+1})$ orthogonally (see \cite{Ma}, Section 5.2.1, Lemma 1), while the solution for Type-II partitioning problem is all totally geodesic $n$-balls  (see \cite{Ma}, Section 9.4.4, Lemma). Bokowsky-Sperner \cite{BS} also studied these two partitioning problems and gave the same classification result as Burago-Maz'ya when $B$ is a ball. They also gave several estimates for the corresponding isoperimetric ratio when $B$ is a general convex body.

Besides the area-minimizing hypersurfaces, one is also interested in studying stationary hypersurfaces for these partitioning problems. It follows from the first variational formulas (see e.g. Section 2) that stationary hypersurfaces for Type-I partitioning problem are free boundary constant mean curvature (CMC)  hypersurfaces, while stationary hypersurfaces for Type-II partitioning problem are minimal hypersurfaces intersecting $\p B$ at a constant angle. Here free boundary means the hypersurfaces intersects $\p B$ orthogonally.
There have been plenty of works about existence, regularity and construction of  free boundary CMC hypersurfaces, especially free boundary minimal hypersurfaces in the last four decades.

When $B$ is a ball, there are several rigidity results, for example, Hopf type theorem by Nitsche \cite{Ni} and Ros-Souam \cite{RS}, Alexandrov type theorem by Ros-Souam \cite{RS} for these two types stationary hypersurfaces. In particular, it has been found by Fraser-Schoen's series of works \cite{FS1, FS2, FS3} that  free boundary minimal hypersurfaces in a ball turns out to have close relationship with the Steklov eigenvalue problem.

Stability problem for Type-I partitioning problem has been initiated and studied by Ros-Vergasta \cite{RV}.
A free boundary CMC hypersurface is called type-I stable if the second variation of the area functional at this hypersurface is non-negative among any volume-preserving variations.
Type-I stable free boundary CMC hypersurfaces are smooth local minimizers for Type-I partitioning problem. In the framework of sets of finite perimeter, the local minimizers for Type-I partitioning problem have been considered by Sternberg-Zumbrun \cite{SZ} in 1998.
It has been conjectured that the free boundary totally geodesic $n$-ball and free boundary spherical caps are all type-I stable free boundary CMC hypersurfaces in $\bar \bb^{n+1}$. This conjecture has been recently solved by Nunes \cite{Nu} in two dimension (see also Barbosa \cite{Ba}) and Wang-Xia \cite{WX} in any dimensions. Moreover, Wang-Xia \cite{WX} also gave complete classification for type-I stable capillary hypersurfaces in $\bar \bb^{n+1}$, namely, CMC hypersurfaces intersecting $\ss^n$ at a constant contact angle.

The first aim of this paper is to study stability problem for type-II partitioning problem. Recall that stationary hypersurfaces for type-II partitioning problem in $B$ are minimal hypersurfaces intersecting $\p B$ at a constant angle. We call a minimal hypersurfaces intersecting $\p B$ at a constant angle is type-II stable if the second variation of the area functional at this hypersurface is non-negative among any wetting-area-preserving variations.
We show the following complete classification for type-II stable minimal hypersurfaces in $\bar \bb^{n+1}$ intersecting $\ss^n$ at a constant angle.
\begin{theorem}\label{thm0.1}
A type-II stable stationary immersed hypersurface in an Euclidean ball $\bar \bb^{n+1}$ is a totally geodesic $n$-ball.
\end{theorem}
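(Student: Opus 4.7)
The plan is to exploit the type-II stability inequality which, as established in Section 2, takes the schematic form $Q(u,u) = \int_\S (|\n u|^2 - |A|^2 u^2) - \int_{\p\S} q\,u^2 \geq 0$ for every $u$ obeying the wetting-area-preserving constraint $\int_{\p\S} u = 0$, where $q$ is a boundary weight depending on the contact angle $\th$ and on $h^\S(\mu,\mu)$. I would feed into this inequality test functions built from the Gauss map: the natural candidates are the Jacobi functions $u_a := \langle a, \nu \rangle$ associated to translations $a \in \rr^{n+1}$, which on a minimal $\S$ satisfy $\De u_a + |A|^2 u_a = 0$, so that integration by parts reduces the bulk part of $Q(u_a,u_a)$ to a pure boundary integral $\int_{\p\S} u_a \n_\mu u_a$.

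Before applying stability I would record two preparatory boundary identities. Differentiating the constant contact angle condition $\langle \nu, x \rangle = \cos\th$ along $X$ tangent to $\p\S$ and using $\on_X x = X$ forces $h^\S(\mu, X) = 0$, so $\mu$ is a principal direction of $\S$ along $\p\S$. Combined with the orthonormal decomposition $\mu = \sin\th\, x - \cos\th\, \bar\nu$ (where $\bar\nu$ is the outer conormal of $\p B$ along $\p\S$), this collapses the conormal derivative of the Jacobi function to a single term: $\n_\mu u_a = -h^\S(\mu,\mu)\,\langle a,\mu\rangle$.

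The third step is a vector-valued application: pick an orthonormal basis $\{a_i\}_{i=1}^{n+1}$ of $\rr^{n+1}$, replace each $u_{a_i}$ by $\tilde u_{a_i} := u_{a_i} - c_{a_i}$ where $c_{a_i}$ is its boundary average so the constraint holds, and sum the $n+1$ stability inequalities. The pointwise identities $\sum_i u_{a_i}^2 = |\nu|^2 = 1$ and $\sum_i \langle a_i, \mu \rangle u_{a_i} = \langle \mu, \nu\rangle = 0$ kill the awkward cross terms, leaving an inequality whose bulk side is a multiple of $\int_\S |A|^2$ (whose sign can be controlled once one accounts for the average vector $c = \frac{1}{|\p\S|}\int_{\p\S} \nu$), and whose boundary side, after substituting the value of $q$ together with the relation $\sin\th\,\langle a,\mu\rangle = \langle a, x\rangle - \cos\th\, u_a$ on $\p\S$, becomes sign-definite in the favorable direction. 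This should force $\int_\S |A|^2 = 0$, so $\S$ is totally geodesic; as a minimal hypersurface of $\ov\bb^{n+1}$ whose boundary lies on $\ss^n$, $\S$ must then be the intersection of $\ov\bb^{n+1}$ with an affine hyperplane, i.e., a totally geodesic $n$-ball.

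The main obstacle I anticipate is the bookkeeping in the third step. The sum $\sum_i \tilde u_{a_i}^2 = |\nu - c|^2 = 1 - 2\langle \nu, c \rangle + |c|^2$ introduces mixed terms $\int_\S |A|^2 \langle \nu, c \rangle$ and a $|c|^2$-multiple of $\int_\S |A|^2$, and the boundary side carries an analogous expansion against $q$. Distilling a clean inequality $\int_\S |A|^2 \leq 0$ from these pieces requires the precise interplay between the $\cot\th$ and $\csc\th$ parts of $q$ (coming respectively from $h^\S(\mu,\mu)$ and from the second fundamental form of $\ss^n$) and the three boundary moments $\int_{\p\S} 1$, $\int_{\p\S} \langle a, \mu\rangle u_a$, and $\int_{\p\S} u_a^2$. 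The Euclidean ball's special feature $x = N$, $\on x = \mathrm{Id}$ is exactly what makes these moments line up.
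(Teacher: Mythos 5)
Your preparatory identities are fine (minimality gives $\Delta\langle\nu,a\rangle+|h|^2\langle\nu,a\rangle=0$; the constant angle condition makes $\mu$ a principal direction along $\partial M$, so $\nabla_\mu\langle\nu,a\rangle=h(\mu,\mu)\langle\mu,a\rangle$ up to sign conventions). The genuine gap is in your third step: with the test functions $\tilde u_a=\langle\nu,a\rangle-c_a$, where $c_a$ is the boundary average, the summed stability inequality does \emph{not} become sign-definite. Writing $Q$ for the symmetric index form and $c=|\partial M|^{-1}\int_{\partial M}\nu\,ds$, expanding $Q(\tilde u_a,\tilde u_a)=Q(u_a,u_a)-2c_aQ(u_a,1)+c_a^2Q(1,1)$ and summing over an orthonormal basis (using $\sum_a u_a^2=1$ and $\sum_a u_a\langle\mu,a\rangle=\langle\nu,\mu\rangle=0$, exactly as you propose) gives
\begin{equation*}
0\;\le\;\sum_a Q(\tilde u_a,\tilde u_a)\;=\;-\int_{\partial M}q\,|\nu-c|^2\,ds\;+\;\int_M|h|^2\bigl(2\langle\nu,c\rangle-|c|^2\bigr)\,dA .
\end{equation*}
Neither term is controlled: $q=\frac{1}{\sin\theta}+\cot\theta\,h(\mu,\mu)$ has no a priori sign when $\theta\neq\frac{\pi}{2}$ (and $h(\mu,\mu)$ is unknown), and $2\langle\nu,c\rangle-|c|^2$ is pointwise at most $1$ but not of one sign. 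At best one obtains an estimate of a boundary integral by $\int_M|h|^2\,dA$, which is the wrong direction and cannot force $h\equiv0$. This is precisely the known limitation of the Ros--Vergasta-type argument (coordinates of the Gauss map minus averages), which in the type-I setting also yields only partial results; no amount of bookkeeping of the moments you list closes it.

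The paper avoids this by a different choice of test function that incorporates the position vector: $\varphi_a=\frac{1}{\sin\theta}\langle x,a\rangle+\cot\theta\,\langle\nu,a\rangle$. On $\partial M$ one has $\varphi_a=\langle\mu,a\rangle$, so $\int_{\partial M}\varphi_a\,ds=\int_M\Delta\langle x,a\rangle\,dA=0$ by minimality — the constraint holds exactly, with no average subtraction and hence no vector $c$ — and moreover $\nabla_\mu\varphi_a=q\varphi_a$ on $\partial M$, so the boundary contribution to the index form vanishes identically, regardless of the sign of $q$. Summation over a coordinate basis then leaves only $\int_M|h|^2\bigl(|x|^2+\cos\theta\,\langle x,\nu\rangle\bigr)\le0$, and this residual bulk term (which is itself not pointwise sign-definite) is handled by adding $\int_M\Delta\frac12\Phi^2=0$ for the auxiliary function $\Phi=\langle x,\nu\rangle+\cos\theta$, which vanishes on $\partial M$; this converts the inequality into $\int_M|h|^2|x^T|^2+|\nabla\Phi|^2\le0$, forcing $h\equiv0$. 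Your proposal contains neither the angle-weighted combination of $\langle x,a\rangle$ and $\langle\nu,a\rangle$ nor the $\Phi$-trick, and without them the argument stalls at the displayed inequality above.
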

In case that the ball lies in a space form, we have the similar result.
\begin{theorem}\label{thm0.2}
A type-II stable stationary immersed hypersurface in a (n+1)-ball in a space form is a totally geodesic $n$-ball.
\end{theorem}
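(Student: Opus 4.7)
The plan is to follow the strategy behind Theorem~\ref{thm0.1} verbatim but with the translation Killing fields of $\rr^{n+1}$ replaced by the ambient coordinate fields of an isometric embedding of the space form. Indeed, $\mm^{n+1}(c)$ for $c\in\{-1,0,+1\}$ embeds isometrically into a flat $(n+2)$-dimensional vector space (Euclidean $\rr^{n+2}$ for $c=1$, $\rr^{n+1}$ for $c=0$, Minkowski $\rr^{1,n+1}$ for $c=-1$), and the ambient coordinate vector fields $E_1,\dots,E_N$ project onto conformal Killing fields on $\mm^{n+1}(c)$ with explicit conformal factors. Writing $u_i:=\<E_i,\nu\>$ along a minimal immersion $\S\subset B$ meeting $\p B$ at constant contact angle, each $u_i$ is, up to a lower order correction involving the conformal factor, a Jacobi function for the operator $L=\De_\S+|A|^2+nc$.

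First, I would record the second variation for wetting-area-preserving variations,
\[
Q(u)=\int_\S\bigl(|\n u|^2-(|A|^2+nc)u^2\bigr)\,dA-\int_{\p\S}q(x)\,u^2\,ds,
\]
restricted to the admissible class $\int_{\p\S}u\,ds=0$, where $q(x)$ encodes the shape operator of the geodesic sphere $\p B$ and the constant contact angle. Then for each $i$, I would subtract the $\p\S$-average $c_i$ of $u_i$ to obtain admissible test functions $u_i-c_i$, substitute into $Q$, and sum over $i$. The essential identity to exploit is $\sum_{i=1}^N u_i^2=1$ in the Euclidean case, or an analogous identity differing by a smooth radial function in the sphere and hyperbolic cases, which gives closed-form control on the quadratic expression.

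Third, after integration by parts and use of the Jacobi-type equation for $u_i$, I would expect the summed second variation to reduce to an inequality of the form
\[
0\le\sum_{i=1}^N Q(u_i-c_i)\le -\int_\S f\,|A|^2\,dA
\]
for some positive weight $f$ coming from the identity in the previous step. This forces $|A|\equiv 0$, so $\S$ is totally geodesic with zero mean curvature; standard classification then yields that $\S$ is a totally geodesic $n$-ball through a point of $B$.

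The main obstacle I anticipate is handling the extra terms in the space form cases cleanly. In $\rr^{n+1}$ the ambient coordinate fields are genuinely Killing, so the curvature term $nc$ vanishes and the boundary Robin coefficient $q$ matches the shape operator of $\ss^n$ in a self-cancelling way; in $\ss^{n+1}$ and $\hh^{n+1}$ the coordinate fields are only conformal Killing, producing new zeroth-order bulk terms and new boundary contributions. These must be absorbed by the constant-angle boundary term using that $\p B$ is a geodesic sphere, hence umbilical, with explicit mean curvature determined by $c$ and the radius. A careful book-keeping, parallel to the flat case but accounting for the conformal factor and the modified boundary geometry, should show that everything reassembles into the desired sign, completing the classification.
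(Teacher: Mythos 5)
There is a genuine gap: the heart of the paper's proof is the \emph{choice} of test function, and your proposal replaces it with one that does not work as stated. For the type-II problem the admissible class is $\int_{\partial M}\varphi\,ds=0$ and the second variation carries the Robin-type boundary term $\int_{\partial M}\varphi(\nabla_\mu\varphi-q\varphi)\,ds$ with $q=\frac{1}{\sin\theta}h^{\partial B}(\bar\nu,\bar\nu)+\cot\theta\,h(\mu,\mu)$, which contains the second fundamental form of the \emph{unknown} hypersurface and has no sign. If you take $u_i=\langle E_i,\nu\rangle$ from a flat embedding and subtract its $\partial M$-average $c_i$, two things go wrong: the subtracted constant destroys the Jacobi-type interior equation (constants are not in the kernel of $\Delta+|A|^2+nc$), and, more seriously, the boundary term neither vanishes nor acquires a sign, since nothing forces $\nabla_\mu(u_i-c_i)=q\,(u_i-c_i)$ and the $h(\mu,\mu)$-part of $q$ cannot be ``absorbed'' by umbilicity of $\partial B$. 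The paper's key construction avoids both problems at once: in the hyperbolic case it uses genuine Killing fields $Y_a=\frac12(|x|^2+1)a-\langle x,a\rangle x$ of the ball model and sets $\varphi_a=\frac{1}{\sin\theta\,\sinh R}\,\bar g(Y_a,x)+\cot\theta\,\bar g(Y_a,\nu)$; then $\varphi_a=\bar g(Y_a,\mu)$ on $\partial M$, so $\int_{\partial M}\varphi_a\,ds=0$ follows from the divergence theorem applied to $Y_a^T$ with $H=0$ (no average needs to be subtracted), and a computation shows the exact Robin identity $\nabla_\mu\varphi_a=q\varphi_a$ along $\partial M$, killing the boundary term. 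Your sketch does not supply any mechanism playing this role, and ``careful book-keeping'' will not produce one for the functions $u_i-c_i$.

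A second, related gap is the final sign. Even with the correct test functions, summing over the coordinate directions gives $\int_M|h|^2\bigl(\bar g(x,x)+\cos\theta\,\sinh R\,\bar g(x,\nu)\bigr)\le 0$, whose integrand is \emph{not} pointwise nonnegative, so one cannot conclude $|h|\equiv 0$ directly as your step three asserts. The paper closes the argument with the auxiliary function $\Phi=-(\bar g(x,\nu)+\cos\theta\,\sinh R)$, which vanishes on $\partial M$ and satisfies $\Delta\Phi=|h|^2\bar g(x,\nu)$; adding $\int_M\Delta\frac12\Phi^2=0$ completes a square and yields $\int_M|h|^2|x^T|^2+|\nabla\Phi|^2\le 0$, whence $h\equiv 0$. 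Your outline has the right overall flavor (ambient symmetries, summation, rigidity from $|A|\equiv0$), but without the specific Killing-field test functions satisfying both the admissibility and the Robin condition exactly, and without the $\Phi$-completion of the square, the proof does not close.
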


For general convex bodies in general ambient $3$-manifolds, we obtain some topological restriction for  type-II stable stationary immersed surfaces.
\begin{theorem}\label{thm0.3}
Let $x: M\to B\subset \bar M^3$ be a type-II stable stationary compact immersed surface with free boundary. Assume $\overline{\rm Ric}\ge 0$ and $h^{\p B}\ge 0$.
Then the only possible values for the genus $g$ and the number of boundary component $r$ of $x(M)$ are
$g=0\hbox{ or }1$ and $r=1, 2 \hbox{ or }3$ or $g=2$ and $r=1$. Moreover, $g=2$ and $r=1$ happens only when $h^{\p B}\equiv 0$ along $\p M$ and $\overline{{\rm R}}-\overline{{\rm Ric}}(\nu,\nu)\equiv 0$ along $M$.
\end{theorem}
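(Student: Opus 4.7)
My plan is to follow the Ros--Vergasta strategy for type-I stable surfaces, adapted to the wetting-area-preserving (type-II) setting: combine the second-variation inequality with the Gauss equation, Gauss--Bonnet, and a Hersch-type choice of test functions. First, from the second variation of area among wetting-area-preserving variations I would derive the stability inequality
\[
\int_M\big(|\nabla u|^2-(|A|^2+\overline{\rm Ric}(\nu,\nu))u^2\big)\,dA \;\ge\; \int_{\p M} q\,u^2\,ds
\]
for every $u\in C^\infty(M)$ with $\int_{\p M} u\,ds=0$, where the boundary coefficient $q$ depends on the contact angle $\theta$ and on $h^{\p B}$. Using the capillary formula relating the geodesic curvature $k_g$ of $\p M$ in $M$ with $h^{\p B}$, $h^M$ and $\theta$, the hypothesis $h^{\p B}\ge 0$ should force $2k_g+q\ge 0$ along $\p M$. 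On the interior, because $M^2$ is minimal in $\bar M^3$, the Gauss equation gives
\[
|A|^2+\overline{\rm Ric}(\nu,\nu)=(\bar R-\overline{\rm Ric}(\nu,\nu))-2K_M,
\]
with the first summand non-negative under $\overline{\rm Ric}\ge 0$, and Gauss--Bonnet $\int_M K_M\,dA+\int_{\p M}k_g\,ds=2\pi\chi(M)$ converts the Gauss-curvature integral into $2\pi(2-2g-r)$ plus a boundary piece.

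The key step is to produce enough admissible test functions. I would form the Riemann double $\widetilde{M}=M\cup_{\p M}M^*$ of genus $\widetilde g=2g+r-1$, and use a Yang--Yau / Li--Yau type theorem to obtain an equivariant (under the doubling involution) conformal branched cover $\phi=(\phi_1,\phi_2,\phi_3)\colon\widetilde{M}\to S^2\subset\rr^3$ of degree $d$ controlled by $\widetilde g$. A Hersch argument exploiting the six-dimensional M\"obius group of $S^2$ lets me enforce the three linear constraints $\int_{\p M}\phi_i\,ds=0$ simultaneously, so each $\phi_i$ is admissible. Feeding $u=\phi_i$ into the stability inequality, summing over $i=1,2,3$, and using $\sum_i\phi_i^2\equiv 1$ together with the conformality identity $\sum_i|\nabla\phi_i|^2=2e^{2\lambda}$, the output combines with the Gauss--Bonnet rewriting to give the schematic inequality
\[
4\pi(\chi(M)+d)\;\ge\;\int_M(\bar R-\overline{\rm Ric}(\nu,\nu))\,dA \;+\; \int_{\p M}(2k_g+q)\,ds \;\ge\; 0,
\]
so that $\chi(M)\ge -d$. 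Combined with a sharp degree bound on equivariant conformal maps on the real double (in the spirit of Gabard's theorem for real algebraic curves, tighter than the bare Yang--Yau estimate), a case analysis on $(g,r)$ isolates exactly the list in the statement. In the borderline case $(g,r)=(2,1)$, every inequality must be saturated, which forces $h^{\p B}\equiv 0$ along $\p M$ and $\bar R-\overline{\rm Ric}(\nu,\nu)\equiv 0$ along $M$, giving the rigidity part.

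The two main technical obstacles I foresee are: (i) pinning down the explicit form of the boundary coefficient $q$ and the capillary identity for $k_g$, in order to verify the key positivity $2k_g+q\ge 0$ under $h^{\p B}\ge 0$, a tensorial computation at $\p M$ that needs care; and (ii) obtaining the sharp degree bound on equivariant conformal maps $\widetilde{M}\to S^2$ that excludes precisely the topological pairs not appearing in the statement, which is where the real structure of the double must be exploited beyond the bare Yang--Yau / Li--Yau estimate.
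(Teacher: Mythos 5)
Your overall framework (stability inequality, Hersch balancing, Gauss equation plus Gauss--Bonnet, conformal maps as test functions) is exactly the paper's, and your boundary positivity is fine in the free boundary case: with $\theta=\pi/2$ one has $q=h^{\p B}(\bar\nu,\bar\nu)$ and $\kappa_g=h^{\p B}(e,e)$, so $2\kappa_g+q\ge 0$ when $h^{\p B}\ge 0$ (for general $\theta$ this would fail, but the statement is free boundary). The genuine gap is in the test-function step. Your single construction --- an equivariant conformal map of the double $\widetilde M$ to $\ss^2$, equivalently an Ahlfors--Gabard map $M\to\ss^2_+$ --- gives, after summing the three coordinates and using Gauss--Bonnet, precisely $4\pi(d+\chi(M))\ge \int_M(\overline{\rm R}-\overline{\rm Ric}(\nu,\nu))\,dA+\int_{\p M}(\kappa_g+H^{\p B})\,ds\ge 0$, i.e. $d\ge 2g+r-2$. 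With Gabard's bound $d\le g+r$ the number $r$ cancels: $g+r\ge 2g+r-2$ only forces $g\le 2$ (and equality at $g=2$, whence the rigidity), but it excludes no value of $r$ whatsoever --- e.g. $(g,r)=(0,5)$ or $(1,4)$ or $(2,2)$ survive. Moreover the ``sharp degree bound on equivariant conformal maps, tighter than the bare Yang--Yau estimate'' that you hope will finish the case analysis goes the wrong way: the equivariant (real) gonality of the double is controlled by Gabard's $g+r$, which is in general \emph{larger} than the Yang--Yau bound for a closed surface of genus $2g+r-1$, and no equivariant bound can restore the $r$-dependence that cancels identically in the displayed inequality.

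What is missing is a second, independent family of test functions whose inequality retains $r$. The paper gets it by capping off the $r$ boundary circles with conformal disks to form a closed surface $\hat M$ of genus $g$, taking a conformal map $\hat\psi:\hat M\to\ss^2$ of degree at most $1+\left[\frac{g+1}{2}\right]$ (the classical meromorphic-function bound), restricting to $M$, balancing by Hersch--Li--Yau, and using the \emph{strict} energy estimate $\int_M|\nabla\psi|^2<2\,{\rm Area}\le 8\pi\left(1+\left[\frac{g+1}{2}\right]\right)$ because $M$ is a proper subsurface of $\hat M$. This yields $0\le\hbox{LHS}<4\pi\left(4-2g-r+2\left[\frac{g+1}{2}\right]\right)$, which gives $r\le 3$ for $g=0,1$ and $r=1$ for $g=2$ (and would still allow $(3,1)$, which is why the double/Gabard inequality is also needed). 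Only the combination of the two constructions isolates the stated list; as written, your proposal proves only $g\le 2$ together with the rigidity statement at $g=2$.
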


\begin{theorem}\label{thm0.4}
Let $x: M\to B\subset \bar M^{3}$ be a type-II stable stationary compact immersed surface. Assume $\overline{\rm Sect}\ge 0$ and $h^{\p B}\ge 0$. Assume $x(\p M)$ is embedded in $\p B$.
Then the only possible values for the genus $g$ and the number of boundary component $r$ of $x(M)$ are
$g+\frac{r}{2}<4$ if $g$ is even and $g+\frac{r}{2}<5$ if $g$ is odd.
\end{theorem}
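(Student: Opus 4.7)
The plan is to run the Hersch--Li--Yau scheme used by Ros--Vergasta and Fraser--Li for related stability problems, adapted to the type-II wetting-area constraint. From the second-variation formula derived in Section 2 the stability inequality reads
\[
\int_M\bigl(|\nabla u|^2-(|A|^2+\overline{\rm Ric}(\nu,\nu))u^2\bigr)\,dA-\int_{\p M}q\,u^2\,ds\ge 0
\]
for all admissible $u$ (i.e.\ $\int_{\p M}u\,ds=0$), where the boundary coefficient $q$ depends on $h^{\p B}$ and the constant contact angle; under $\overline{\rm Sect}\ge 0$ we get $\overline{\rm Ric}(\nu,\nu)\ge 0$ and $\overline R\ge 0$, and under $h^{\p B}\ge 0$ we get $q\ge 0$. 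The goal is to construct three admissible test functions from the components of a conformal map to $\ss^2$ of small degree, and then convert the resulting numerical inequality into a topological one.

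Since $x(\p M)$ is embedded in $\p B$ and the contact angle is constant, I conformally reflect $M$ across $\p M$ to obtain a closed Riemann surface $M^d$ of genus $\tilde g=2g+r-1$, equipped with an antiholomorphic involution $\sigma$ whose fixed locus is $\p M$. The Brill--Noether gonality bound supplies a meromorphic map $\phi:M^d\to\ss^2$ of degree $d\le\lfloor(\tilde g+3)/2\rfloor$. Applying Hersch's lemma to the $\sigma$-invariant arclength measure on $\p M\subset M^d$, I post-compose $\phi$ with a Möbius transformation of $\ss^2$ so that $\int_{\p M}\phi_i\,ds=0$ for $i=1,2,3$, making the coordinate functions $\phi_i$ admissible.

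Plugging the $\phi_i$ into stability and summing via $\sum\phi_i^2=1$ gives
\[
\int_M|d\phi|^2\ge\int_M\bigl(|A|^2+\overline{\rm Ric}(\nu,\nu)\bigr)\,dA+\int_{\p M}q\,ds.
\]
The Gauss equation $|A|^2=2\overline K(T_pM)-2K_M$ (valid since $H=0$) together with the 3-dimensional identity $\overline R=2\overline K(T_pM)+2\overline{\rm Ric}(\nu,\nu)$ produce
\[
|A|^2+\overline{\rm Ric}(\nu,\nu)=\overline R-\overline{\rm Ric}(\nu,\nu)-2K_M\ge -2K_M.
\]
Applying Gauss--Bonnet $\int_MK_M+\int_{\p M}k_g=2\pi(2-2g-r)$ and verifying that the combined boundary term $\int_{\p M}(k_g+\tfrac12 q)$ is nonnegative (via the standard decomposition of $k_g^M$ into $k_g^{\p B}$ and $h^{\p B}$ at constant contact angle) then yields
\[
\int_M|d\phi|^2\ge 4\pi(2g+r-2).
\]
The conformal energy identity on $M^d$ controls the left-hand side in terms of $d$; combining the resulting lower bound on $d$ with the Brill--Noether upper bound $d\le\lfloor(\tilde g+3)/2\rfloor$ and carefully tracking how the floor function depends on the parity of $\tilde g=2g+r-1$ (equivalently of $g$) produces the two cases $g+r/2<4$ for $g$ even and $g+r/2<5$ for $g$ odd.

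The principal obstacle is the sharpness of the energy bound. The naïve estimate $\int_M|d\phi|^2\le\int_{M^d}|d\phi|^2=8\pi d$ is too loose to close the numerical argument, and the sharper bound $\int_M|d\phi|^2\le 4\pi d$ requires $\phi$ to be $\sigma$-symmetric, i.e.\ a real meromorphic function on the real curve $(M^d,\sigma)$. Exhibiting such a real map of the Brill--Noether degree, and simultaneously carrying out the Hersch balancing inside the 3-parameter subgroup $PSL(2,\rr)\subset PSL(2,\CC)$ of Möbius transformations commuting with the real structure, is the technical heart of the proof, and is ultimately responsible for the parity-dependent splitting in the final inequality. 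A secondary technical point is establishing $\int_{\p M}(k_g+\tfrac12q)\ge 0$, which follows from the standard decomposition of $k_g^M$ under the constant-angle boundary condition together with $h^{\p B}\ge 0$.
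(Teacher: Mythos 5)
Your scheme diverges from the paper's in two places, and both contain genuine gaps. First, the sign of the boundary term. You assert that $\int_{\p M}\bigl(\kappa_g+\tfrac12 q\bigr)\,ds\ge 0$ follows from ``the standard decomposition of $\kappa_g$ at constant contact angle''. For $\th\neq\frac{\pi}{2}$ that decomposition reads $\kappa_g=\cos\th\,\bar\kappa_g+\sin\th\,h^{\p B}(e,e)$, where $\bar\kappa_g$ is the geodesic curvature of $\p M$ inside $\p B$, and $\bar\kappa_g$ has no sign whatsoever; nonnegativity of the boundary term only comes for free in the free boundary case $\th=\frac{\pi}{2}$ (that is Theorem \ref{thm0.3}). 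This is precisely where the hypothesis that $x(\p M)$ is embedded is actually needed: the paper integrates, applies Gauss--Bonnet to a component $T$ of $\p B\setminus x(\p M)$ (choosing the side met at an obtuse angle), and uses that $\overline{\rm Sect}\ge 0$ and $h^{\p B}\ge 0$ force $K_{\p B}\ge 0$, to obtain only the weaker bound $\int_{\p M}\kappa_g\,ds\ge -2\pi r$. That $-2\pi r$ is what produces the $\tfrac{r}{2}$ in the final inequality; your claimed pointwise-in-spirit nonnegativity is false in general and, if it were true, embeddedness would be superfluous and you would be proving a strictly stronger statement than the theorem.

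Second, the conformal map. The paper does not double $M$: it caps off the boundary to get a closed surface $\hat M$ of genus $g$ and uses the Griffiths--Harris bound ${\rm deg}(\hat\psi)\le 1+\bigl[\frac{g+1}{2}\bigr]$, which is where the parity of $g$ enters; it also keeps the strict inequality because the attached disks carry positive Dirichlet energy. In your route you need a $\sigma$-equivariant (real) meromorphic function on the double of the complex Brill--Noether degree $\lfloor(\tilde g+3)/2\rfloor$; complex Brill--Noether existence gives no equivariance, and the best general real result (Ahlfors, improved by Gabard) gives degree at most $g+r$ --- exactly what the paper uses in its second balancing argument, which by itself yields a different inequality, insufficient for this theorem. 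You flag this as the ``technical heart'' but leave it unresolved, so the key ingredient is missing. Moreover your parity bookkeeping is off: $\tilde g=2g+r-1$ has the parity of $r-1$, not of $g$, so even granting the symmetric map and the energy bound $4\pi d$, your floor-function analysis would split according to the parity of $r$ and cannot reproduce the stated dichotomy ``$g+\tfrac r2<4$ for $g$ even, $<5$ for $g$ odd''. To repair the argument you should follow the paper's structure: test with the balanced coordinates of the degree-$\le 1+[\frac{g+1}{2}]$ map from the capped surface, and control $\int_{\p M}\kappa_g$ via Gauss--Bonnet on the wetting component $T\subset\p B$.
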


In Theorems \ref{thm0.3} and \ref{thm0.4}, $\overline{\rm Sect}$, $\overline{\rm Ric}$ and $\overline{{\rm R}}$ denote the sectional curvature, the Ricci curvature and the scalar curvature of $\bar M$ respectively, and $h^{\p B}$ denotes the second fundamental form of $\p B\subset \bar M$.

\

The instability for a variational problem is quantitatively measured by the Morse index. For the type-II partitioning problem, the Morse index for a stationary hypersurface is the non-negative integer which indicates the dimension of sets of wetting-area-preserved deformations which decreases the area of the type-II stationary hypersurface. A stationary hypersurface is stable is equivalent that it has vanishing Morse index. It turns out that the Morse index  controls the topology and geometry for stationary hypersurfaces.

There are plenty of works on the index estimate for closed minimal hypersurfaces or minimal hypersurfaces with free boundary, see for example, Ros \cite{Ros2}, Savo \cite{Sa} and Ambrozio-Carlotto-Sharp \cite{ACS1, ACS2}.
See also \cite{Ros1, CdO1, CdO2} for index estimate for CMC surfaces with free boundary, which is related to type-I partitioning problem. The technique in \cite{Ros1, CdO1, CdO2} for non-minimal CMC case only applies for two dimension.

The next aim of this paper is to study the index estimate for minimal hypersurfaces with constant contact angle, i.e., stationary hypersurfaces for type-II partitioning problem. We use ${\rm Ind}(M)$ to denote the Morse index for a type-II stationary hypersurface $M$. Following the argument of Savo \cite{Sa} and Ambrozio-Carlotto-Sharp \cite{ACS1, ACS2}, by using the coordinates of harmonic one-forms, we are able to prove the following lower bound for the index.
\begin{theorem}\label{thm-6}
Let $x: M^n\to B\subset \bar M^{n+1}$ be a type-II stationary compact immersed hypersurface. Let $\bar M$ be isometrically embedded in $\mathbb{R}^d$. Assume for any non-zero vector field $\xi$ on $M$ satisfies
\begin{eqnarray}\label{condi-6-5}
&&\int_{M} {\rm tr}_M(\overline{{\rm Rm}}(\cdot,\xi,\cdot,\xi))  +\overline{{\rm Ric}}(\nu,\nu)|\xi|^2dA+\int_{\p M} \frac{1}{\sin \th}\, H^{\p B}|\xi|^{2} ds\\
&>&\int_{M}\left[|({\rm II}(\cdot, \xi)|^2-|{\rm II}(\nu, \xi)|^2)+(|{\rm II}(\cdot, \nu)|^2- |{\rm II}(\nu, \nu)|^2)|\xi|^2 \right]\,dA.\nonumber
\end{eqnarray}
where $\overline{{\rm Rm}}$ and $\overline{{\rm Ric}}$ denote the Riemannian curvature tensor and Ricci curvature tensor of $\bar M$ respectively, $H^{\p B}$ denotes the mean curvature of $\p B\subset \bar M$, and ${\rm II}$ denotes the second fundamental form for the embedding $\bar M\subset \rr^d$.
Then
\begin{equation*}
{\rm Ind}(M)\geq\frac{2}{d(d-1)}\dim H_{1}(M, \partial M; \mathbb{R})-1
\end{equation*}
where $ H_{1}(M,\partial M;\mathbb{R})$ denotes the first relative homology group with real coefficients.
\end{theorem}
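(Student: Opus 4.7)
The strategy is to implement the harmonic one-form technique of Savo \cite{Sa} and Ambrozio-Carlotto-Sharp \cite{ACS1, ACS2}, suitably adapted to the wetting-area-constrained setting of type-II stability. By the Hodge-Morrey-Friedrichs decomposition, the space of harmonic one-forms on $M$ satisfying the appropriate (relative) boundary condition is isomorphic to $H^{1}(M, \partial M; \mathbb{R})$, and hence via Poincar\'e-Lefschetz duality has dimension $N := \dim H_{1}(M, \partial M; \mathbb{R})$. I fix a basis $\omega_{1}, \ldots, \omega_{N}$ and write $X_{i} := \omega_{i}^{\sharp}$ for the metric duals; these are divergence-free vector fields whose boundary behaviour is compatible with the geometric setup of the capillary free-boundary problem.

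Using the isometric embedding $\bar M \hookrightarrow \mathbb{R}^{d}$ with standard frame $\{E_{A}\}_{A=1}^{d}$, I then form the scalar test functions $u_{i, A} := \omega_{i}(E_{A}^{\top})$, where $E_{A}^{\top}$ denotes the tangential projection of $E_{A}$ onto $TM$. Substituting $u_{i, A}$ into the type-II stability quadratic form $\mathcal{Q}$---whose interior part has the Jacobi shape $\int_{M}\bigl(|\nabla u|^{2}-(|h|^{2}+\overline{{\rm Ric}}(\nu,\nu))u^{2}\bigr)\,dA$ and whose boundary part, arising from differentiating the constant contact-angle condition $\langle \nu, N\rangle = \cos\theta$, carries a $\frac{1}{\sin\theta}H^{\partial B}$ coefficient---and summing over $A$ via $\sum_{A}E_{A}\otimes E_{A}=\mathrm{Id}_{\mathbb{R}^{d}}$, the sum collapses into intrinsic quantities evaluated on the vector field $X_{i}$, with the ambient second fundamental form $\mathrm{II}$ of $\bar M\subset \mathbb{R}^{d}$ appearing through the decomposition $E_{A}=E_{A}^{\top}+E_{A}^{\perp}$. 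The Bochner-Weitzenb\"ock identity for the harmonic one-form $\omega_{i}$, namely $\Delta X_{i}+\overline{{\rm Ric}}(X_{i})=0$ together with integration by parts, then matches the summed expression with the difference of the two sides of hypothesis \eqref{condi-6-5} applied to $\xi = X_{i}$. The hypothesis therefore yields
\[
\sum_{A=1}^{d}\mathcal{Q}(u_{i, A}, u_{i, A}) < 0\quad\text{for every }\omega_{i}\not\equiv 0.
\]

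To convert this into an index estimate, consider the pullback of $\mathcal{Q}$ to $\mathcal{H}^{1}_{\mathrm{rel}}(M)\otimes \mathbb{R}^{d}$ along $\omega\otimes V\mapsto \omega(V^{\top})$. The trace on each $\omega_{i}$-slice is strictly negative, and a linear-algebra argument---controlling the maximal dimension of the kernel of the evaluation map imposed by the ambient embedding---produces a negative subspace of dimension at least $\tfrac{2N}{d(d-1)}$. Since type-II stability is defined only on variations preserving the wetting area---the single linear constraint $\int_{\partial M}u/\sin\theta\,ds = 0$---at most one additional dimension may be absorbed, giving the stated bound. The principal technical hurdle is the Bochner-Weitzenb\"ock step: correctly matching the boundary integral arising from integration by parts with the $\frac{1}{\sin\theta}H^{\partial B}$ term, which demands a careful decomposition of $\bar\nabla_{\mu}E_{A}$ (with $\mu$ the outward conormal of $\partial M$ in $M$) using the contact-angle condition and the second fundamental form of $\partial B\subset \bar M$, together with a Gauss-Codazzi calculation to recover the ambient $\mathrm{II}$-contributions and to convert the residual curvature terms into $\overline{{\rm Rm}}$ and $\overline{{\rm Ric}}$ in the form demanded by \eqref{condi-6-5}.
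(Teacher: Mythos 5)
Your overall architecture (harmonic one-forms with the relative boundary condition, testing the stability form on coordinate functions built from an isometric embedding, counting negative eigenvalues of the unconstrained Robin problem, and losing at most one dimension to the wetting-area constraint as in Proposition \ref{morse-index}) is the right family of ideas, but the core computational step is wrong for this theorem. You take as test functions the coordinates of $\xi$ itself, $u_{i,A}=\langle X_i,E_A\rangle$, which is the paper's \emph{second} method; the paper proves Theorem \ref{thm-6} with the coordinates of $\nu\wedge\xi$, namely $u_{ij}=\langle\xi,E_i\rangle\langle\nu,E_j\rangle-\langle\xi,E_j\rangle\langle\nu,E_i\rangle$ for $1\le i<j\le d$. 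This is not a cosmetic difference. With your choice, $\sum_A|\nabla u_{i,A}|^2$ produces the term $\sum_k h(e_k,\xi)^2$, while the Jacobi operator contributes $-|h|^2\sum_A u_{i,A}^2=-|h|^2|\xi|^2$; these do not cancel in general dimension $n$, and the leftover is not controlled by hypothesis \eqref{condi-6-5}. Indeed \eqref{condi-6-5} contains the terms $\overline{{\rm Ric}}(\nu,\nu)|\xi|^2$ and $\bigl(|{\rm II}(\cdot,\nu)|^2-|{\rm II}(\nu,\nu)|^2\bigr)|\xi|^2$, which arise precisely from differentiating the $\nu$-factor in $u_{ij}$ (via $|\xi|^2|D_{e_k}\nu|^2$, which supplies the $|h|^2|\xi|^2$ needed to cancel the Jacobi term, plus the Gauss equation); with coordinates of $\xi$ alone these terms never appear, so the claimed identity ``$\sum_A\mathcal{Q}(u_{i,A},u_{i,A})=$ difference of the two sides of \eqref{condi-6-5}'' fails. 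The $\xi$-coordinate method only closes in dimension $2$ with $H=0$, where $\sum_k h(e_k,\xi)^2=\tfrac12|h|^2|\xi|^2$, and that is exactly the paper's Theorem \ref{thm-7-p}, under the different hypothesis \eqref{condi-7}.

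There is also a numerological inconsistency that signals the same gap: with $d$ test functions per harmonic form, the injectivity argument for the map into $\mathbb{R}^{dk}$ ($k$ the number of negative eigenvalues) can only give $k\ge \frac{1}{d}\dim H_1(M,\partial M;\mathbb{R})$, not the stated $\frac{2}{d(d-1)}\dim H_1(M,\partial M;\mathbb{R})$; the factor $\frac{2}{d(d-1)}=1/\binom{d}{2}$ comes exactly from using the $\binom{d}{2}$ functions $u_{ij}$. To repair the proof you should replace your test functions by $u_{ij}$, verify $\sum_{i<j}|u_{ij}|^2=|\xi|^2$, carry out the pointwise identity $\sum_{i<j}|\nabla_{e_k}u_{ij}|^2=|D_{e_k}\xi|^2+|\xi|^2|D_{e_k}\nu|^2-2\langle D_{e_k}\nu,\xi\rangle^2$, use the Gauss equation plus the Weitzenb\"ock formula, and exploit the boundary condition $\mu\wedge\xi^{b}=0$ (so $\xi=\lambda\mu$ on $\partial M$) together with $\mu=\frac{1}{\sin\theta}\bar N+\cot\theta\,\nu$ to convert the boundary term into $-\int_{\partial M}\frac{1}{\sin\theta}H^{\partial B}|\xi|^2\,ds$; then the eigenfunction-projection map $\Psi:\mathcal{H}^1_N(M,\partial M)\to\mathbb{R}^{\binom{d}{2}k}$ and Proposition \ref{morse-index} give the stated bound.
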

As a corollary, we have the following
\begin{corollary}\label{cor-7}
Let $B$ be a strictly mean convex domain in $\rr^{n+1}$ and $x: M^n\to B\subset \rr^{n+1}$ be a type-II stationary compact immersed hypersurface. Then
\begin{equation*}
{\rm Ind}(M)\geq\frac{2}{n(n+1)}\dim H_{1}(M,\partial M;\mathbb{R})-1.
\end{equation*}
\end{corollary}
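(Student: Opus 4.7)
The plan is to apply Theorem \ref{thm-6} directly, taking the isometric embedding $\bar M = \rr^{n+1} \hookrightarrow \rr^d$ to be the identity map, so that $d = n+1$. Under this choice the ambient Euclidean space is flat, so $\overline{{\rm Rm}} \equiv 0$ and $\overline{{\rm Ric}} \equiv 0$, and the second fundamental form ${\rm II}$ of $\rr^{n+1}$ inside itself vanishes identically. Consequently the entire right-hand side of \eqref{condi-6-5} is zero, and the interior curvature contribution on the left also drops out.

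What remains is to verify that the boundary term alone provides the required strict inequality, namely that for each test vector field $\xi$ used in the proof of Theorem \ref{thm-6},
$$\int_{\p M} \frac{1}{\sin \th}\, H^{\p B}\, |\xi|^{2}\, ds > 0.$$
Since $B$ is strictly mean convex, $H^{\p B} > 0$ along $\p B$, and the constant contact angle $\th \in (0, \pi)$ gives $\sin \th > 0$, so the integrand is pointwise non-negative and strictly positive wherever $\xi$ does not vanish on $\p M$. The vector fields built from harmonic one-form representatives of non-trivial classes in $H_1(M, \p M; \rr)$, paired with ambient coordinate frames of $\rr^{n+1}$, do not vanish identically on $\p M$, so the strict inequality holds for every relevant non-zero class.

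With the hypothesis of Theorem \ref{thm-6} in place, its conclusion specializes, with $d = n+1$, to
$${\rm Ind}(M) \geq \frac{2}{d(d-1)} \dim H_1(M, \p M; \rr) - 1 = \frac{2}{n(n+1)} \dim H_1(M, \p M; \rr) - 1,$$
which is exactly Corollary \ref{cor-7}. The only delicate point is confirming the boundary non-vanishing of the specific test vector fields used in Theorem \ref{thm-6}; aside from this, the argument is a direct substitution in which strict mean convexity of $\p B$ plays the role that positivity of ambient curvature would play in more general settings.
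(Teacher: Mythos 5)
Your proposal is correct and is essentially the paper's own (implicit) argument: Corollary \ref{cor-7} follows from Theorem \ref{thm-6} by taking the identity embedding, so $d=n+1$, all $\overline{\rm Rm}$, $\overline{\rm Ric}$ and ${\rm II}$ terms vanish, and \eqref{condi-6-5} reduces to strict positivity of $\int_{\p M}\frac{1}{\sin\th}H^{\p B}|\xi|^2\,ds$, which strict mean convexity supplies. The delicate point you flag — that the hypothesis need only be checked for $\xi$ dual to elements of $\mathcal{H}^1_N(M,\p M)$ and that such a non-zero harmonic field cannot vanish identically on $\p M$ — is also left implicit in the paper, and can be settled by unique continuation (locally $\xi^{b}=df$ with $f$ harmonic and vanishing Cauchy data along $\p M$), together with the observation that $\p M\neq\emptyset$ because there are no closed minimal hypersurfaces in $\rr^{n+1}$.
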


Using another argument of Ros \cite{Ros1} and Ambrozio-Carlotto-Sharp \cite{ACS1}, again using the coordinates of harmonic one-forms, we can get the following Morse index estimate for two dimension.
\begin{theorem}\label{thm-7-p}
Let $x: M^2\to B\subset \bar M^{3}$ be a type-II stationary compact immersed surface. Let $\bar M$ be isometrically embedded in $\mathbb{R}^d$. Assume for any non-zero vector field $\xi$ on $M$ satisfies
\begin{eqnarray}\label{condi-7}
&&\int_{M} |{\rm II}(\cdot,\xi)|^{2}-|{\rm II}(\nu,\xi)|^{2}-\frac{1}{2}\overline{{\rm R}}|\xi|^{2}dA-\int_{\partial M}\frac{1}{\sin\theta}H^{\partial B}|\xi|^{2}ds<0
\end{eqnarray}
Then
\begin{equation*}
{\rm Ind}(M)\geq\frac{1}{d}(2g+r-1)-1.
\end{equation*}
\end{theorem}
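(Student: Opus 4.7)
The plan is to adapt the two-dimensional index estimate of Ros \cite{Ros1} and Ambrozio-Carlotto-Sharp \cite{ACS1} to the type-II (fixed contact angle) setting, producing test functions for the Jacobi form by pairing harmonic $1$-forms on $M$ with the coordinates of an isometric embedding $\bar M \hookrightarrow \rr^d$. The sharper denominator $d$ in place of the $d(d-1)/2$ appearing in Theorem \ref{thm-6} comes from the fact that in dimension two a harmonic $1$-form already provides $d$ scalar test functions when evaluated against the coordinate fields, rather than having to be contracted against a symmetric tensor.

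First I would fix the space $\mathcal{H}$ of harmonic $1$-forms on $M$ satisfying the boundary conditions compatible with the type-II index form; by Hodge theory on compact surfaces with boundary,
\[
\dim \mathcal{H} = 2g + r - 1.
\]
For each $\omega\in\mathcal{H}$ and $i = 1,\ldots,d$, set
\[
u_i^\omega := \omega(e_i^T),
\]
where $e_i^T$ is the projection onto $TM$ of the $i$-th coordinate vector of $\rr^d$. Writing $\xi$ for the vector field on $M$ metrically dual to $\omega$, the pointwise identities $\sum_i u_i^\omega\, e_i^T = \xi$ and $\sum_i (u_i^\omega)^2 = |\xi|^2$ hold.

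Next I would compute $\sum_{i=1}^d Q(u_i^\omega, u_i^\omega)$, where $Q$ is the type-II index form. The interior contribution requires expanding $|\nabla u_i^\omega|^2$ and $(|A|^2 + \overline{\rm Ric}(\nu,\nu))(u_i^\omega)^2$, applying Codazzi together with harmonicity $d\omega = \delta\omega = 0$, and invoking the two-dimensional Gauss equation to consolidate $|A|^2 + \overline{\rm Ric}(\nu,\nu)$ with the Bochner contribution from the harmonic form into a single $\tfrac12\overline{{\rm R}}$ coefficient. At the boundary, the $\cot\theta$ corrections coming from the contact-angle constraint combine with $h^{\partial B}$ to yield the weight $\tfrac{1}{\sin\theta}H^{\partial B}$. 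Together, one should arrive at
\[
\sum_{i=1}^{d} Q(u_i^\omega,u_i^\omega) = \int_M \bigl(|{\rm II}(\cdot,\xi)|^2 - |{\rm II}(\nu,\xi)|^2 - \tfrac12 \overline{{\rm R}}|\xi|^2\bigr)\,dA - \int_{\partial M}\tfrac{1}{\sin\theta} H^{\partial B}|\xi|^2\,ds,
\]
which by hypothesis \eqref{condi-7} is strictly negative whenever $\omega\neq 0$. This identity --- and especially the careful tracking of the Robin-type boundary term at the contact angle $\theta$, which differs from both the closed and free-boundary settings treated in \cite{Ros1, ACS1} --- is the step I expect to be the main obstacle.

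Finally I would conclude by dimension counting. Suppose for contradiction $\mathrm{Ind}(M) < \tfrac{1}{d}(2g+r-1) - 1$. Let $V \subset C^\infty(M)$ be the $(\mathrm{Ind}(M)+1)$-dimensional subspace spanned by the constants (accounting for the wetting-area admissibility constraint) together with an $L^2$-orthonormal basis of negative eigenfunctions of the Jacobi operator on wetting-area-preserving variations. The linear map
\[
\Phi : \mathcal{H} \longrightarrow V^d,\qquad \omega \longmapsto \bigl(\pi_V u_i^\omega\bigr)_{i=1}^d,
\]
has domain of dimension $2g+r-1 > d\dim V$, so there exists a nonzero $\omega\in\ker\Phi$. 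For such $\omega$, each $u_i^\omega$ --- after subtracting the constant that restores admissibility --- is $L^2$-orthogonal to the negative eigenspace of $Q$, giving $Q(u_i^\omega,u_i^\omega)\geq 0$ for every $i$, which contradicts the strict negativity established in the previous step.
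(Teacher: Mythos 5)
Your test functions and the central identity are the ones the paper uses: $u_i=\langle\xi,E_i\rangle$ (your $\omega(e_i^T)$ is the same thing), and the formula you state for $\sum_i Q(u_i,u_i)$ is exactly \eqref{1-form-sum}, proved for $\xi^{b}$ in the space $\mathcal{H}^1_T(M,\partial M)$ of harmonic one-forms with $i_\mu\xi^{b}=0$, $i_\mu d\xi^{b}=0$, whose dimension is $\dim H_1(M,\partial M;\mathbb{R})=2g+r-1$; so steps one and two of your plan are on track. The genuine gap is in your final counting step, i.e.\ in how you treat the admissibility constraint. For the type-II problem the constraint is the \emph{boundary} condition $\int_{\partial M}\varphi\,ds=0$, not $\int_M\varphi\,dA=0$, so making $u_i$ $L^2(M)$-orthogonal to the constants does nothing toward admissibility. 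Your repair --- subtracting the constant $c_i$ so that $\int_{\partial M}(u_i-c_i)\,ds=0$ --- breaks the argument at two points: $u_i-c_i$ is in general no longer $L^2$-orthogonal to your negative eigenfunctions (there is no reason their integrals over $M$ vanish), and $Q(u_i-c_i,u_i-c_i)=Q(u_i,u_i)-2c_iQ(u_i,1)+c_i^2Q(1,1)$ differs from $Q(u_i,u_i)$ by uncontrolled terms, so neither the claimed nonnegativity nor the contradiction with $\sum_iQ(u_i,u_i)<0$ follows. Moreover, even for honestly admissible functions, the implication ``orthogonal to a maximal negative subspace $\Rightarrow Q\ge 0$'' is only valid once one has a twisted eigenvalue problem adapted to the constraint; since $\varphi\mapsto\int_{\partial M}\varphi\,ds$ is not $L^2(M)$-continuous, this is not the standard volume-constrained (type-I) twisted problem, and you have not set it up.

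The paper sidesteps all of this by decoupling the constraint from the spectral theory. Proposition \ref{morse-index} gives $k-1\le \mathrm{Ind}(M)\le k$, where $k$ is the number of negative eigenvalues of the \emph{unconstrained} Robin problem \eqref{22}. One then maps $\mathcal{H}^1_T(M,\partial M)\to\mathbb{R}^{dk}$ by $\xi^{b}\mapsto\bigl(\int_M u_i\varphi_q\,dA\bigr)$, with $\varphi_q$ the first $k$ eigenfunctions of \eqref{22}; a nonzero element of the kernel would satisfy $Q(u_i,u_i)\ge\lambda_{k+1}\int_M u_i^2\,dA\ge0$ for every $i$ by the Rayleigh characterization (no admissibility needed here), contradicting \eqref{1-form-sum} together with \eqref{condi-7}. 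Hence $2g+r-1\le dk$, and $\mathrm{Ind}(M)\ge k-1\ge\frac{1}{d}(2g+r-1)-1$. If you replace your third step by this argument (or by a correctly formulated constrained spectral argument in which the boundary functional itself, rather than the constants, is recorded in the target of your linear map), your proposal becomes the paper's proof.
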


In the case of $\bar{M}^{3}=\mathbb{R}^{3} \hbox{ or }\mathbb{S}^{3}$ and $B$ is mean convex domain,  the inequality (\ref{condi-7}) is satisfied. (In the case $\mathbb{R}^{3}$, one needs use the fact that there are no minimal closed surfaces in $\mathbb{R}^{3}$).
Therefore we have the following corollaries.
\begin{corollary}\label{dim-2-R}
Let $B$ be a mean convex domain in $\rr^3$ and $x: M^2\to B$ be a type-II stationary compact immersed surface. Then
\begin{equation*}
{\rm Ind}(M)\geq\frac{1}{3}(2g+r-4).
\end{equation*}
\end{corollary}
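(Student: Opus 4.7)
The plan is to obtain this as a direct specialization of Theorem~\ref{thm-7-p} taken with $\bar M^3 = \mathbb{R}^3$ and the trivial isometric embedding $\mathbb{R}^3 \hookrightarrow \mathbb{R}^3$, so that $d=3$. With this choice the general lower bound of Theorem~\ref{thm-7-p} reads
\[
{\rm Ind}(M) \geq \frac{1}{3}(2g+r-1) - 1 = \frac{1}{3}(2g+r-4),
\]
which is exactly the stated conclusion. So the entire content of the corollary reduces to checking that the structural hypothesis \eqref{condi-7} of Theorem~\ref{thm-7-p} holds in this setting.

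Under the trivial embedding $\mathbb{R}^3 \subset \mathbb{R}^3$, the extrinsic second fundamental form ${\rm II}$ vanishes identically, and the scalar curvature $\overline{\rm R}$ of $\mathbb{R}^3$ is zero. Consequently the interior integrand in \eqref{condi-7} is identically zero, and the hypothesis collapses to the requirement
\[
-\int_{\partial M} \frac{1}{\sin\theta}\, H^{\partial B}\, |\xi|^2\, ds \;<\; 0
\]
for every non-zero vector field $\xi$ on $M$, i.e. $\int_{\partial M} (H^{\partial B}/\sin\theta)\,|\xi|^2\,ds > 0$. Mean convexity of $B$ gives $H^{\partial B}\geq 0$ on $\partial B$ and $\sin\theta>0$ by the contact-angle convention, so the integrand is non-negative, and the only thing left to rule out is the degenerate situation in which the integral vanishes.

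The degenerate case is where the hint ``there are no minimal closed surfaces in $\mathbb{R}^3$'' enters. First, this non-existence forces $\partial M \neq \emptyset$: the stationary hypersurface $M$ is a compact minimal surface sitting inside the bounded set $B\subset \mathbb{R}^3$, and a closed ($r=0$) such surface cannot exist. Second, for any candidate test field $\xi$ for which the boundary integral vanishes, one can either argue by approximating $B$ by strictly mean-convex domains $B_\varepsilon$ (on which \eqref{condi-7} holds strictly and Theorem~\ref{thm-7-p} applies verbatim) and passing to the limit, or equivalently observe that equality in the stability inequality would produce a boundary Jacobi field which, combined with the vanishing of $H^{\partial B}$ along $\partial M$, again returns one to a closed-minimal-surface scenario in $\mathbb{R}^3$, which is impossible. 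Either approach produces the desired strict inequality for the relevant test fields and closes the argument.

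The principal obstacle, and the only part that is not a direct substitution, is precisely this degenerate-case analysis: guaranteeing strictness in $\int_{\partial M} (H^{\partial B}/\sin\theta) |\xi|^2 \,ds > 0$ when $B$ is only weakly mean convex. The non-existence of closed minimal surfaces in $\mathbb{R}^3$ is the key rigidity input that makes the perturbation (or limiting) argument succeed.
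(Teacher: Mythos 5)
Your main reduction is exactly the paper's: take $\bar M^3=\mathbb{R}^3$ with the identity embedding, so $d=3$, $\overline{\rm R}=0$ and ${\rm II}\equiv 0$, whence \eqref{condi-7} collapses to strict positivity of $\int_{\partial M}\frac{1}{\sin\theta}H^{\partial B}|\xi|^2\,ds$, and Theorem~\ref{thm-7-p} yields ${\rm Ind}(M)\ge\frac13(2g+r-1)-1=\frac13(2g+r-4)$; the paper's entire argument is this substitution together with the remark that mean convexity signs the boundary term and that the non-existence of closed minimal surfaces in $\mathbb{R}^3$ rules out $\partial M=\emptyset$ (where the strict inequality could not possibly hold), so in substance you follow the same route.

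Where you go beyond the paper is the borderline analysis, and there your first fix is not sound as stated: if you replace $B$ by a strictly mean convex $B_\varepsilon$, the surface $M$ is no longer a proper stationary immersion with $x(\partial M)\subset\partial B_\varepsilon$, and both the quadratic form $Q$ (through the coefficient $q$ in \eqref{q}) and ${\rm Ind}(M)$ are defined relative to $B$, so Theorem~\ref{thm-7-p} cannot be applied ``verbatim'' in $B_\varepsilon$ and there is nothing well-defined to pass to the limit with; the alternative ``boundary Jacobi field forces a closed-minimal-surface scenario'' is only a heuristic. Note also that, read literally, \eqref{condi-7} can never hold for \emph{all} nonzero $\xi$ in the flat case (take $\xi$ supported in the interior); the condition is only needed for the duals of harmonic one-forms in $\mathcal{H}^1_T(M,\partial M)$ used in the proof, and for those fields unique continuation prevents vanishing along a boundary arc, which is the natural way to obtain strictness when $H^{\partial B}>0$ on some arc of $\partial M$. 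The genuinely degenerate situation $H^{\partial B}\equiv 0$ along all of $\partial M$ is settled neither by your sketch nor by the paper's one-line assertion, so apart from the flawed approximation step your proposal is on par with, and no weaker than, the published argument.
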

\begin{corollary}\label{dim=2-S}
Let $B$ be a mean convex domain in $\ss^3$ and $x: M^2\to B$ be a type-II stationary compact immersed surface.
Then
\begin{equation*}
  {\rm Ind}(M)\geq\frac{1}{4}(2g+r-5).
\end{equation*}
\end{corollary}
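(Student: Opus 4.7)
The plan is to deduce this from Theorem \ref{thm-7-p} applied to the standard isometric embedding $\ss^3\hookrightarrow \rr^4$, so that $d=4$ and the desired bound $\frac{1}{d}(2g+r-1)-1 = \frac{1}{4}(2g+r-5)$ matches. The only thing to check is that the curvature/boundary hypothesis (\ref{condi-7}) holds automatically for any type-II stationary surface in a mean convex domain $B\subset\ss^3$, so that Theorem \ref{thm-7-p} becomes unconditional in this setting.

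First I would record the ambient geometry: for $\ss^3\subset\rr^4$, the second fundamental form of the embedding is $\mathrm{II}(X,Y)=-\langle X,Y\rangle N$, where $N$ is the outward unit normal. Hence for any tangent vector field $\xi$ on $M$ and any orthonormal frame $\{e_1,e_2\}$ on $M$,
\begin{equation*}
|\mathrm{II}(\cdot,\xi)|^2 \;=\; \sum_{i=1}^{2}\langle e_i,\xi\rangle^{2} \;=\; |\xi|^{2}, \qquad |\mathrm{II}(\nu,\xi)|^{2}=\langle \nu,\xi\rangle^{2}=0,
\end{equation*}
since $\nu\perp TM$ and $\xi\in TM$. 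Combined with the scalar curvature $\overline{R}=6$ of $\ss^3$, the bulk integrand in (\ref{condi-7}) becomes $|\xi|^2-0-3|\xi|^2=-2|\xi|^2$.

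Second, the mean convexity hypothesis $H^{\p B}\ge 0$ together with $\sin\theta>0$ forces the boundary term $-\int_{\p M}\frac{1}{\sin\theta}H^{\p B}|\xi|^{2}ds$ to be non-positive. Therefore, for any $\xi\not\equiv 0$,
\begin{equation*}
\int_{M} |\mathrm{II}(\cdot,\xi)|^{2}-|\mathrm{II}(\nu,\xi)|^{2}-\tfrac{1}{2}\overline{R}|\xi|^{2}\,dA-\int_{\p M}\frac{H^{\p B}}{\sin\theta}|\xi|^{2}\,ds \;\le\; -2\int_{M}|\xi|^{2}\,dA \;<\;0,
\end{equation*}
so the hypothesis of Theorem \ref{thm-7-p} is satisfied.

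Finally, I would invoke Theorem \ref{thm-7-p} with $d=4$ to conclude ${\rm Ind}(M)\ge \frac{1}{4}(2g+r-1)-1=\frac{1}{4}(2g+r-5)$. There is no real obstacle here; the only subtlety is verifying that the embedding-dependent quantities $|\mathrm{II}(\cdot,\xi)|^2-|\mathrm{II}(\nu,\xi)|^2$ reduce cleanly to $|\xi|^2$ for the totally umbilical sphere $\ss^3\subset\rr^4$, which is what makes the condition automatic. The analogous corollary for $\rr^3$ (Corollary \ref{dim-2-R}) is handled in the same spirit but requires the additional observation, mentioned by the authors, that $M$ cannot be closed in $\rr^3$, since there the bulk integrand reduces only to a non-positive (not strictly negative) quantity.
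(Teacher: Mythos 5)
Your proposal is correct and follows the same route the paper takes: verify that hypothesis \eqref{condi-7} holds automatically for the totally umbilical embedding $\ss^3\subset\rr^4$ (bulk integrand $|\xi|^2-\tfrac12\overline{{\rm R}}|\xi|^2=-2|\xi|^2<0$, boundary term nonpositive by mean convexity) and then apply Theorem \ref{thm-7-p} with $d=4$. Your closing remark about the $\rr^3$ case also matches the authors' parenthetical comment.
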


The remaining part of this paper is organized as follows. In Section \ref{sec2} we review the definition and basic properties of type-II stationary hypersurfaces. 
In Section \ref{sec3} we give a proof of Theorem \ref{thm0.1} for type-II stationary hypersurfaces in a ball in $\R^{n+1}$ after finding admissible test function \eqref{test-function1}.
and we will provide a detailed proof of Theorem \ref{thm0.2} for type-II stationary   hypersurfaces in a ball in ${\mathbb H}^{n+1}$ and sketch
a proof for type-II hypersurfaces in a ball in ${\mathbb S}^{n+1}$.
In Section \ref{sec4},  we use balancing argument to study stability problem for type-II stationary   hypersurfaces in general convex bodies in general ambient manifolds, and prove Theorem \ref{thm0.3} and \ref{thm0.4}.
In Section \ref{sec5}, we give Morse index estimate lower bounds for type-II stationary hypersurfaces, and prove Theorems \ref{thm-6} and \ref{thm-7-p},

\

\section{Preliminaries}\label{sec2}
Let $(\bar M^{n+1}, \bar g)$ be an oriented $(n+1)$-dimensional Riemannian manifold and $B$ be a smooth
compact domain in $\bar M$ that is diffeomorphic to an Euclidean ball. Let $x: (M^n, g)\to B$ be an
isometric immersion of an orientable $n$-dimensional compact manifold $M$ with boundary $\p M$ into $B$  satisfying $$x({\rm int} M)\subset {\rm int} B\hbox{ and }x(\p M)\subset\p B.$$
 Such an immersion is called {\it proper}.

We denote by $\bar \n$, $\bar \Delta$ and $\bar \n^2$ the gradient, the Laplacian and the Hessian on $\bar M$ respectively, while by $\n$, $\Delta$ and $\n^2$ the gradient, the Laplacian and the Hessian on $M$ respectively.
We will use the following terminology for four normal vector fields.
We choose one of the unit normal vector field along $x$ and denote it by $\nu$.
We denote  by $\bar N$ the unit outward normal to $\p B$ in $B$ and $\mu$ be the unit outward normal to $\p M$ in $M$.
Let $\bar \nu$ be the unit normal to $\p M$ in $\p B$ such that the bases $\{\nu, \mu\}$ and $\{\bar \nu, \bar N\}$ have the same orientation in the
normal bundle of $\p M\subset \bar M$.
Denote by $h$ and $H$ the second fundamental form and the mean curvature of the immersion $x$ respectively. Precisely,
$h(X, Y)= \bar g(\bar \n_X \nu, Y)$ and $H=\tr_g(h).$

By an admissible variation of $x$, we mean a differentiable map $x: (-\epsilon, \epsilon)\times M\to B\subset \bar M$ such that $x(t, \cdot): M\to B$ is an immersion satisfying $x(t, {\rm int} M)\subset{\rm int} B$ and $x(t, \p M)\subset\p B$ for every $t\in (-\ep, \ep)$ and $x(0, \cdot)=x$.
We denote the area functional $A: (-\epsilon, \epsilon)\rightarrow \mathbb{R}$ of the immersion $x(\cdot, t)$ by
\begin{equation*}
  A(t)=\int_{M}dA_{t}
\end{equation*}
where $dA_{t}$ is the volume element of $M$ in the metric induced by $x(t, \cdot)$. We denote the wetting area functional $A_W(t): (-\epsilon, \epsilon)\rightarrow \mathbb{R}$ are defined by
\begin{equation*}
    A_{W}(t)=\int_{[0,t]\times\partial M}x^{\ast}dA_{\partial B}.
\end{equation*}
where $dA_{\partial B}$ is the area element of $\partial B$.
A variation is said to be wetting-area-preserving if $A_W(t)=A_W(0)=0$ for each $t\in (-\ep, \ep)$.

It is easy to see that the first variation formulae of $A(t)$ and $A_W(t)$ for an admissible variation with a variation vector field $Y=\frac{\p}{\p t}x(t,\cdot)|_{t=0}$ are given by
\begin{eqnarray}
&&A'(0)=\int_M  H\bar g(Y, \nu) \,dA+\int_{\p M} \bar g(Y, \mu)\,ds,\label{first-var}\\
&&A_W'(0) =\int_{\partial M}\bar g(Y, \bar{\nu})\, ds,\label{wet-first-var}
\end{eqnarray}
where $dA$ and $ds$ are the area element of $M$ and $\p M$ respectively.


\begin{defi}
A proper immersion $x: M\rightarrow B\subset\bar M$ is said to be {\it type-II stationary} if $A'(0)=0$ for any wetting-area-preserving variation of $x$.
\end{defi}
From the above first variation formulae, we know that $x$ is type-II stationary if and only if $x$ is a minimal immersion, namely $H=0$, and \begin{eqnarray}\label{capill}
\bar g(Y, \mu-c\bar \nu)=0 \hbox{ for some constant }c\in\rr\hbox{ and any }Y\in T({\p B}).
\end{eqnarray}
Equation \eqref{capill} implies $\partial M$ intersects $\partial B$ at some constant angle $\theta\in (0,\pi)$ such that $\cos \th=c$.

We make a choice of the normal $\nu$ so that, along $x(\partial M)$, the angle between $-\nu$ and $\bar{N}$ or equivalently between $\mu$ and $\bar{\nu}$ is everywhere equal to $\theta$ (see Figure 1).
To be more precise, in the normal bundle of $x(\partial M)$, we have the following relations:
\begin{alignat}{2}
\mu&=\sin\theta\, \bar{N}+\cos\theta\,\, \bar{\nu},\label{mu}\\
\nu&=-\cos\theta\,\, \bar{N}+\sin\theta\, \bar{\nu}.\label{nu}
\end{alignat}

\begin{figure}
\centering
\includegraphics[height=7cm,width=14.8cm]{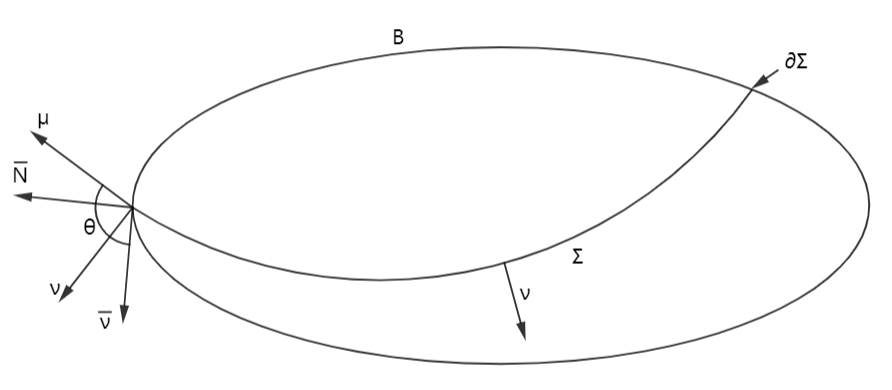}
\caption{$\Sigma=x(M)$ and $\partial\Sigma=x(\partial M)$}
\end{figure}

For $Y\in T(\p B)$, from \eqref{nu} and the fact $\bar g(Y,\bar N)=0$, we see that a wetting-area-preserving variation with $Y=Y_{0}+\vp\nu$, where $Y_0$ is the tangential part of $Y$, satisfies
\begin{equation}\label{wetting-pre}
  \int_{\partial M}\vp \,ds=0.
\end{equation}
Conversely, we shall show
\begin{prop}Let $x: M\rightarrow B\subseteq \bar{M}$ be a proper type-II stationary immersion with a contact angle $\th$. 
Then for a given $\vp\in C^\infty(M)$ satisfying $\int_{\p M}  \vp \,ds=0$, there exists an admissible wetting-area-preserving variation of $x$ with variational vector field having $\vp\nu$ as its normal part.
\end{prop}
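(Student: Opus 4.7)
The standard recipe for producing admissible variations in capillary/partitioning problems applies: first I would construct a candidate variation whose normal part is $\vp\nu$, and then correct it with an auxiliary parameter (via the implicit function theorem) so that the resulting variation is wetting-area-preserving. The main construction is a vector field $Y$ defined in a neighborhood of $x(M)$ in $\bar M$ such that $Y|_M$ has $\vp\nu$ as its normal part and $Y$ is tangent to $\p B$ along $\p B$, so that its flow preserves $B$ and maps $\p B$ to $\p B$. The obstruction is that $\vp\nu$ itself is not tangent to $\p B$ along $\p M$: by \eqref{nu}, $\nu=-\cos\th\,\bar{N}+\sin\th\,\bar{\nu}$ has a nonzero $\bar{N}$-component. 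To cancel it, along $\p M$ I add the tangential-to-$M$ vector $\vp\cot\th\,\mu$; using \eqref{mu}--\eqref{nu} and the constancy of $\th$,
\begin{equation*}
\vp\nu+\vp\cot\th\,\mu=\frac{\vp}{\sin\th}\,\bar{\nu}\in T(\p B)\big|_{\p M}.
\end{equation*}
Extending $\vp\cot\th\,\mu$ from $\p M$ to a smooth tangential field $Y_0$ on $M$ by a cutoff in a tubular neighborhood of $\p M$, setting $Y|_M=\vp\nu+Y_0$, and then extending $Y$ off $M$ to a neighborhood of $x(M)$ in $\bar M$ so that $Y$ remains tangent to $\p B$ along $\p B$ (standard tubular-neighborhood construction) produces the required field.

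Next I choose $\psi\in C^\infty(M)$ with $\int_{\p M}\psi\,ds\neq 0$ and, by the same recipe, construct a vector field $Z$ on $\bar M$, tangent to $\p B$ along $\p B$, whose restriction to $M$ has normal part $\psi\nu$. With $\Phi^Y_t,\Phi^Z_s$ denoting the local flows, set $\Psi(t,s,\cdot)=\Phi^Z_s\circ\Phi^Y_t\circ x$; each $\Psi(t,s,\cdot)$ is admissible for $(t,s)$ small. Define $F(t,s)=A_W(\Psi(t,s,\cdot))$. By \eqref{wet-first-var},
\begin{equation*}
\p_sF(0,0)=\int_{\p M}\bar{g}(Z,\bar{\nu})\,ds=\frac{1}{\sin\th}\int_{\p M}\psi\,ds\neq 0,
\end{equation*}
so the implicit function theorem yields a smooth $s(t)$ near $0$ with $s(0)=0$ and $F(t,s(t))\equiv 0$. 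The variation $\tilde x(t,\cdot):=\Psi(t,s(t),\cdot)$ is then admissible and wetting-area-preserving, with initial velocity $Y|_M+s'(0)\,Z|_M$. Differentiating $F(t,s(t))=0$ at $t=0$ gives $s'(0)=-\p_tF(0,0)/\p_sF(0,0)$, and by \eqref{wet-first-var} combined with the hypothesis $\int_{\p M}\vp\,ds=0$,
\begin{equation*}
\p_tF(0,0)=\int_{\p M}\bar{g}(Y,\bar{\nu})\,ds=\frac{1}{\sin\th}\int_{\p M}\vp\,ds=0.
\end{equation*}
Hence $s'(0)=0$, and the variational vector field of $\tilde x$ equals $Y|_M$, whose normal component is $\vp\nu$, as required.

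\textbf{Main obstacle.} The essential difficulty is the construction of $Y$: simultaneously prescribing its normal trace on $M$ and enforcing tangency to $\p B$ along all of $\p B$. The cancellation $\vp\nu+\vp\cot\th\,\mu\in T(\p B)$ along $\p M$ works globally precisely because the contact angle $\th$ is constant, which is built into the type-II stationarity condition; once $Y_0$ is in hand, propagating tangency from $\p M$ to all of $\p B$ is routine with tubular neighborhoods and cutoffs, and the implicit function theorem step is automatic since the first-order obstruction vanishes by the hypothesis on $\vp$.
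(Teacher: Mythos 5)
Your argument is correct in outline, but it takes a genuinely different route from the paper. The paper uses no two-parameter family and no implicit function theorem: it builds a single vector field $Z=W+\nu$ near $x(M)$, tangent to $\partial B$ along $\partial B$ and normalized by $\bar g(Z,\nu)=1$, and then reparametrizes its flow by a point-dependent time $u(t,p)$ solving the ODE $\partial_t u=\varphi/E(u(t,\cdot),\cdot)$, where $E$ is the Jacobian factor in $\Psi^{*}dA_{\partial B}=E\,dt\,dA_{\partial M}$. With this choice the swept-area density is identically $\varphi\,dt\,dA_{\partial M}$, so $A_W(t)=t\int_{\partial M}\varphi\,ds=0$ \emph{exactly} for every $t$, and the variation field at $t=0$ is $\varphi(W+\nu)$, whose normal part is $\varphi\nu$. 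Your construction is instead the classical two-parameter correction in the spirit of \cite{BCE}: an auxiliary field $Z$ with $\int_{\partial M}\psi\,ds\neq 0$ plus the implicit function theorem. That works and is perhaps more familiar from the volume-preserving (type-I) setting; what the paper's ODE trick buys is exact wetting-area preservation in one stroke, without the auxiliary field or the IFT step, with $\varphi$ entering through the time reparametrization rather than through the field itself.

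Two points in your write-up need shoring up. First, $A_W$ is by definition a functional of the \emph{variation} (an integral over $[0,t]\times\partial M$), not of the immersion at a given parameter value, so ``$F(t,s)=A_W(\Psi(t,s,\cdot))$'' is not yet defined: you must define $F(t,s)$ as the area swept along a path in the $(t,s)$-square from $(0,0)$ to $(t,s)$ and observe that, since $dA_{\partial B}$ is a closed (top-degree) form on $\partial B$, this swept area is independent of the choice of path rel endpoints; only then is $F$ well defined, and only then does $F(t,s(t))=0$ coincide with the wetting-area functional of the one-parameter variation $\tau\mapsto\Psi(\tau,s(\tau),\cdot)$, which is what the definition of wetting-area-preserving requires. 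Second, extending $Y$ and $Z$ to an ambient neighborhood of $x(M)$ tacitly assumes $x$ is an embedding; for a genuine immersion with self-intersections such an extension does not exist, and one needs the reduction the paper uses, namely pulling back $\bar g$ to $(-\epsilon,\epsilon)\times M$ via an auxiliary admissible variation and running the embedded argument there (see \cite{AS}). With these two repairs your proof is complete.
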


\begin{proof} We argue as in \cite{BCE} and \cite{AS}.

We first assume $x: M\to B$ is embedded.
For each point $p\in \p M$, let $\nu_0=\nu+\cos \th \, \bar N$ be the projection of $\nu$ on $T_{x(p)}(\p B)$. Denote $W=\frac{1}{\bar g(\nu,\nu_0)}\nu_0-\nu$
which is tangential to $x(M)$ along $\p M$. Extend $W$ smoothly to a vector field on $x(M)$, still denote by $W$. Denote $Z=W+\nu$ and extend $Z$ smoothly to a vector field on $U\subset B$, which is a $\delta$-neighborhood of $x(M)$ in $B$, such that $Z$ is tangential to $T(\p B)$ along $\p B\cap \bar U$.
By construction, $\bar g(Z, \nu)=1$. Consider the local flow $\psi_t$ of $Z$ in $\bar U$, that is, $\frac{\p}{\p t}\psi_t= Z$. Let
$\Psi: (-\ep_1, \ep_1)\times M\to B$ be given by $\Psi(t, \cdot)=\psi_t$.
We shall find a function $u:(-\ep,\ep)\times M\to \rr$ such that $$\tilde{\Psi}(t, \cdot)=\Psi(u(t,\cdot), \cdot)$$ is the desired deformation.
First, since $\psi_t$ is the local flow of $Z$ and $Z$ is  tangential to $T(\p B)$ along $\p B\cap \bar U$, we know $\tilde{\Psi}(t, \p M)\subset \p B$.
Second,
since $$\tilde{\Psi}^{\ast}dA_{\partial B}=\frac{\p u}{\p t} {\Psi}^{\ast}dA_{\partial B}=\frac{\p u}{\p t} E(u(t,\cdot), \cdot) dtdA_{\p M},$$
where $E(u(t,\cdot), \cdot)= \det(d\Psi|_{(u(t,\cdot), \cdot)}),$
we have $$A_W(\tilde{\Psi}(t, \cdot))=\int_{[0,t]\times\partial M}\tilde{\Psi}^{\ast}dA_{\partial B}= \int_{\p M}\int_0^t \frac{\p u}{\p t} E(u(t,\cdot), \cdot) dtdA_{\p M}.$$
Let $u(t, \cdot): (-\ep,\ep)\times M\to \rr$  be the local solution of the following initial value problem:
$$\frac{\p u}{\p t}= \frac{\vp}{E(u(t,\cdot), \cdot)}, \quad u(0,\cdot)=0, \hbox{ in }M.$$
It follows from the condition $\int_{\p M}  \vp \,ds=0$ that
$A_W(\tilde{\Psi}(t, \cdot))=0,$ that is, $\tilde{\Psi}(t, \cdot)$ is a wetting area preserving admissible deformation.
Finally, it is easy to see that
$$\frac{\p}{\p t}\Big|_{t=0}\tilde{\Psi}(t, \cdot)= \frac{\p u}{\p t}\Big|_{t=0}\cdot Z(0, \cdot)= \vp(W+\nu),$$
which means the variational vector field of $\tilde{\Psi}(t, \cdot)$ has $\vp\nu$ as its normal part. 

In the general case of immersion, we shall first construct an admissible variation $\tilde{x}: (-\epsilon, \epsilon)\times M\to B$ and endow $(-\epsilon, \epsilon)\times M$ with the pull-back metric $\tilde{x}^*(\bar g)$ and it is enough to prove the result for  $(-\epsilon, \epsilon)\times M$ endowed with $\tilde{x}^*(\bar g)$, which is the embedded case. See \cite{AS}, Proposition 2.1 for details.
\end{proof}

The second variational formula of the area functional $A$ under admissible wetting-area-preserving variations is given as follows.
\begin{prop}\label{second-var}
Let $x: M\rightarrow B\subset\bar M$ be a proper type-II stationary immersion. Let $x(\cdot, t)$ be an admissible wetting-area-preserving variation with variational vector field $Y$ having $\vp\nu$ as its normal part. Then
\begin{eqnarray}\label{second}
&&A''(0)=\int_M -\vp(\De \vp+(|h|^2+\overline{{\rm Ric}}(\nu,\nu))\vp)\, dA+\int_{\p M} \vp(\n_\mu \vp-q \vp)\,ds.
\end{eqnarray}
Here
\begin{eqnarray}\label{q}
q=\frac{1}{\sin \th}h^{\p B}(\bar \nu, \bar \nu)+\cot \th \, h(\mu,\mu),
\end{eqnarray}
$\overline{\rm Ric}$ is the Ricci curvature tensor of $\bar M$,  
and $h^{\p B}$ is that of $\p B$ in $\bar M$ given by $h^{\p B}(X, Y)= \bar g(\bar \n_X \bar N, Y)$.
\end{prop}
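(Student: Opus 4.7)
My plan is to differentiate the first variation formula \eqref{first-var} at $t=0$ and analyze the interior and boundary contributions separately, then rearrange to obtain \eqref{second}. First, decompose the variational vector field as $Y = Y^{\top} + \vp\nu$ along $M$. Along $\p M$ the admissibility condition $x(t,\p M)\subset \p B$ gives $\bar g(Y,\bar N) = 0$, which by the rotation \eqref{mu}--\eqref{nu} of the pair $\{\nu,\mu\}$ into $\{\bar\nu,\bar N\}$ by the angle $\th$ forces
\[
\bar g(Y^{\top},\mu)\big|_{\p M} = \vp\cot\th.
\]
This algebraic identity is the source of the $\cot\th\, h(\mu,\mu)$ piece of $q$ in \eqref{q}.

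For the interior, $H\equiv 0$ at $t=0$ means only the first variation of $H_t$ survives when differentiating $\int_M H_t\,\bar g(Y,\nu_t)\,dA_t$. Using the classical identity
\[
\tfrac{\p H_t}{\p t}\Big|_{t=0} = -\De\vp - (|h|^2+\overline{\rm Ric}(\nu,\nu))\vp + Y^{\top}(H),
\]
together with $H\equiv 0$ (which also kills $Y^{\top}(H)$), produces precisely the interior integrand $-\vp(\De\vp+(|h|^2+\overline{\rm Ric}(\nu,\nu))\vp)$.

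For the boundary, I would differentiate $\int_{\p M}\bar g(Y,\mu_t)\,ds_t$ at $t=0$, using the evolutions of $\mu_t$, of the line element $ds_t$, and of the angle relations \eqref{mu}--\eqref{nu}. Decompose $\bar\n_\mu Y$ in the frame $\{\bar\nu,\bar N\}$ and use $h^{\p B}(X,Y) = \bar g(\bar\n_X \bar N, Y)$. The normal-to-$M$ piece of $\bar\n_\mu Y$ yields $\n_\mu \vp$ via $\bar g(\bar\n_\mu Y,\nu) = \n_\mu\vp - h(Y^{\top},\mu)$; the constraint $\bar g(Y,\bar N)\equiv 0$ along $\p M$ differentiated in $\mu$ produces the $h^{\p B}(\bar\nu,\bar\nu)/\sin\th$ term in $q$ after rotating back by $\th$; and the $\vp\cot\th\,\mu$ part of $Y^{\top}$ combined with $h(Y^{\top},\mu)$ produces the $\cot\th\,h(\mu,\mu)$ term in $q$. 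Tangential-to-$\p M$ divergences integrate to zero on the closed curve $\p M$ (or cancel in pairs). Collecting all pieces yields the boundary integrand $\vp(\n_\mu\vp - q\vp)$.

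The main obstacle is the boundary bookkeeping in the last step: one must consistently rotate between the orthonormal frames $\{\mu,\nu\}$ and $\{\bar\nu,\bar N\}$ at the constant angle $\th$, track which time-derivatives of $\mu_t,\bar\nu_t,\bar N_t$ contribute to which geometric quantity, and correctly identify the weights $1/\sin\th$ and $\cot\th$ in \eqref{q}. Once the computation is organized around the decomposition $Y = (\vp\cot\th)\mu + T + \vp\nu$ along $\p M$ (with $T\in T(\p M)$) and around $h^{\p B}(\bar\nu,\bar\nu)$ as the natural second-order invariant of $\p B$ detected by the variation, the coefficient $q$ falls out.
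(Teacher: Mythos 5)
Your interior computation matches the paper's (differentiate the first variation, use $H'=-(\Delta\vp+(|h|^2+\overline{\rm Ric}(\nu,\nu))\vp)$ and $H\equiv 0$), and the decomposition $Y=Y_1+\vp\cot\th\,\mu+\vp\nu$ along $\p M$ is exactly the right starting point for the boundary term. But your plan for the boundary has a genuine gap: you assert that, after extracting $\vp(\n_\mu\vp-q\vp)$, the remaining terms ``integrate to zero on the closed curve $\p M$ or cancel in pairs.'' They do not. Differentiating $\int_{\p M}\bar g(Y,\mu_t)\,ds_t$ produces three pieces, $\int_{\p M}\langle Y',\mu\rangle$, $\int_{\p M}\langle Y,\mu'\rangle$ and $\int_{\p M}\langle Y,\mu\rangle\frac{d}{dt}\big|_{t=0}ds_t$, and the first of these contains genuinely second-order data of the variation (the acceleration $Y'$), which cannot be expressed through $\vp$ alone. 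In the paper's proof, after using $\langle Y',\bar N\rangle=-h^{\p B}(Y,Y)$ (the time-differentiated admissibility constraint) and a Ros--Souam type identity for $\mu'$ and $\bar\nu'$, the leftover terms assemble precisely into
\[
\cos\th\,\frac{d}{dt}\Big|_{t=0}\Big(\int_{\p M}\langle Y,\bar\nu\rangle\,ds_t\Big),
\]
i.e.\ $\cos\th$ times the derivative of $A_W'(t)$, and this vanishes only because the variation is wetting-area-preserving ($A_W(t)\equiv 0$, hence $A_W'(t)\equiv 0$). Your proposal never invokes the wetting-area constraint in the boundary computation, so for $\th\neq\frac{\pi}{2}$ you have no way to dispose of the $\cos\th\,\langle Y',\bar\nu\rangle$, $\cos\th\,\langle Y,\bar\nu'\rangle$ and line-element terms; this is the essential idea of the proof and it is missing.

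A secondary inaccuracy: you attribute the $\frac{1}{\sin\th}h^{\p B}(\bar\nu,\bar\nu)$ term in $q$ to differentiating the constraint $\bar g(Y,\bar N)=0$ ``in the direction $\mu$.'' That constraint holds only along $\p M$, and $\mu$ is not tangent to $\p M$, so it cannot be differentiated in that direction. The correct source is the time-derivative of the condition that boundary points stay on $\p B$: the curve $t\mapsto x(t,p)$ lies in $\p B$, so $\langle Y',\bar N\rangle=-h^{\p B}(Y,Y)$; expanding $Y=Y_1+\frac{\vp}{\sin\th}\bar\nu$ gives the $\frac{\vp^2}{\sin^2\th}h^{\p B}(\bar\nu,\bar\nu)$ contribution, while the $h^{\p B}(Y_1,Y_1)$ and mixed terms are cancelled against $\langle Y,\mu'\rangle$ via the identity $h(Y_1,\mu)=-h^{\p B}(Y_1,\bar\nu)$ and the Ros--Souam lemma. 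Without these two ingredients (the second-order admissibility constraint and the second-order wetting-area-preservation), the coefficient $q$ in \eqref{q} cannot be derived.
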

We postpone the proof of Proposition \ref{second-var} to Appendix \ref{appendix}.
\begin{defi}
A proper type-II stationary immersion $x: M\rightarrow B\subset\bar M$ is called stable if $A''(0)\ge 0$ for all wetting-area-preserving variations, that is, \begin{eqnarray}\label{stab-ineq0}
\int_M -\vp(\De \vp+(|h|^2+\overline{{\rm Ric}}(\nu,\nu))\vp) dA+\int_{\p M} \vp(\n_\mu \vp-q \vp)\,ds\ge 0,\end{eqnarray}
for any $\vp$ satisfying $\int_{\p M} \vp\, ds=0$. Here $q$ is as in \eqref{q}.

\end{defi}

\


\section{Uniqueness for type-II stable stationary hypersurfaces in a ball}\label{sec3}
\subsection{The Euclidean case}\

In this subsection, we consider the case $(\bar M, \bar g)=(\rr^{n+1}, \delta)$ and $B=\bar \bb^{n+1}$ is the Euclidean unit ball (in our notation, $\bb^{n+1}$ is the Euclidean unit open ball).
In this case, $\overline{\rm Ric}\equiv 0$, $h^{\p\bb}=g^{\p\bb}$ and $\bar N(x)=x$.
Abuse of notation, we use $x$ to denote the position vector in $\rr^{n+1}$. We use $\<\cdot, \cdot\>$ to denote the Euclidean inner product.

The stability condition becomes
\begin{eqnarray}\label{stab-ineq1}
A''(0)=\int_M -\vp(\De \vp+|h|^2\vp)\, dA+\int_{\p M} \vp(\n_\mu \vp-q \vp)\,ds\ge 0
\end{eqnarray}
with $$q=\frac{1}{\sin \th}+\cot \th \, h(\mu,\mu).$$
for all $\vp$ such that $\int_{\p M} \vp\, ds=0$.

\begin{theorem}
A type-II stable minimal hypersurfaces in $\bar \bb^{n+1}$ intersecting $\ss^n$ at a constant angle is a totally geodesic $n$-ball.
\end{theorem}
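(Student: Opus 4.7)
Based on the hint in the excerpt that the proof proceeds by finding an admissible test function \eqref{test-function1}, and motivated by the limit case $\theta=\pi/2$ (free boundary minimal) where the choice $\varphi_a=\langle x,a\rangle$ is known to work, I expect the natural candidate for general contact angle to be
\[
\varphi_a=\langle x+\cos\theta\,\nu,\,a\rangle,\qquad a\in\mathbb{R}^{n+1}.
\]
My first task would be to verify admissibility, i.e.\ $\int_{\p M}\varphi_a\,ds=0$. Using the frame relations given in the preliminaries, $\bar N|_{\p M}=\sin\theta\,\mu-\cos\theta\,\nu$, so that $(x+\cos\theta\,\nu)|_{\p M}=\sin\theta\,\mu$, giving $\varphi_a|_{\p M}=\sin\theta\,\langle a,\mu\rangle$. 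Minimality makes the coordinate functions $\langle x,e_i\rangle$ harmonic on $M$, and the divergence theorem applied to them yields $\int_{\p M}\mu\,ds=0$, from which admissibility follows for every $a$.

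Next I would compute the Jacobi-type relation. Since $\De\langle x,a\rangle=0$ and $\De\langle\nu,a\rangle=-|h|^2\langle\nu,a\rangle$ (the latter from $\De\nu=-|h|^2\nu$ in $\R^{n+1}$), one obtains $(\De+|h|^2)\varphi_a=|h|^2\langle x,a\rangle$. Summing the stability inequality $Q(\varphi_{e_i})\ge 0$ over an orthonormal basis of $\R^{n+1}$, and exploiting $H=0$, I compute
\[
\sum_i|\n\varphi_{e_i}|^2=n+\cos^2\theta\,|h|^2,\qquad \sum_i\varphi_{e_i}^2=|x+\cos\theta\,\nu|^2,
\]
the latter equal to $\sin^2\theta$ on $\p M$. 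Combining with the Pohozaev identity $n|M|=\sin\theta\,|\p M|$ (coming from $\mathrm{div}_M(x^{\top})=n$) and the divergence identity $\int_M |h|^2\langle x,\nu\rangle\,dA=\sin\theta\int_{\p M}h(\mu,\mu)\,ds$ (from $\De\langle x,\nu\rangle=-|h|^2\langle x,\nu\rangle$ together with $\n_\mu\langle x,\nu\rangle=-\sin\theta\,h(\mu,\mu)$), the summed stability inequality reduces to a single weighted integral bound on $|h|^2$.

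To finish, I would complement the argument with the auxiliary admissible test function $\psi=\langle x,\nu\rangle+\cos\theta$, which vanishes on $\p M$ (hence is automatically admissible) and satisfies $(\De+|h|^2)\psi=|h|^2\cos\theta$, so that $Q(\psi)\ge 0$ provides a bound on $\int_M|h|^2\psi\,dA$. Combining the two bounds, together with the two integral identities above, should force $|h|\equiv 0$. The boundary identity $x+\cos\theta\,\nu=\sin\theta\,\mu$ then identifies $M$ as the totally geodesic $n$-ball at signed distance $\cos\theta$ from the center of $\bar\bb^{n+1}$.

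The main obstacle lies in this last combination step. Neither the interior weight $|x|^2+2\cos\theta\,\langle x,\nu\rangle$ nor the boundary integral $\int_{\p M}h(\mu,\mu)\,ds$ is a priori sign-definite on an arbitrary minimal immersion, so the interplay between the two stability inequalities, together with the sign of $\cos\theta$, must be arranged carefully. As a sanity check, the case $\theta=\pi/2$ collapses everything immediately: all cross terms vanish and the summed inequality reads $\int_M|h|^2|x|^2\,dA\le 0$, which forces $h\equiv 0$ by minimal-surface regularity, recovering the rigidity of free boundary stable minimal hypersurfaces in the ball.
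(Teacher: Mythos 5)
Your test function is exactly the paper's up to the harmless factor $\tfrac{1}{\sin\theta}$ (the paper takes $\varphi_a=\tfrac{1}{\sin\theta}\langle x,a\rangle+\cot\theta\,\langle\nu,a\rangle$), and your admissibility argument and the relation $(\Delta+|h|^2)\varphi_a=|h|^2\langle x,a\rangle$ coincide with the paper's. Where you differ is only in bookkeeping: the paper verifies the pointwise Robin condition $\nabla_\mu\varphi_a=q\varphi_a$ on $\partial M$, so stability plus the Jacobi relation gives directly, after summing over a coordinate basis, the key inequality $\int_M|h|^2\bigl(|x|^2+\cos\theta\,\langle x,\nu\rangle\bigr)\,dA\le 0$, with no need for the Pohozaev identity or the divergence identity you invoke. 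Your Dirichlet-form route can be made to reproduce the same inequality, but beware a sign slip: with the paper's convention $h(X,Y)=\bar g(\bar\nabla_X\nu,Y)$ (the convention entering $q=\tfrac{1}{\sin\theta}+\cot\theta\,h(\mu,\mu)$), one has $\nabla_\mu\langle x,\nu\rangle=+\sin\theta\,h(\mu,\mu)$ along $\partial M$, not $-\sin\theta\,h(\mu,\mu)$; with your sign the $h(\mu,\mu)$ boundary terms fail to cancel and the interior weight comes out as $|x|^2+3\cos\theta\,\langle x,\nu\rangle$, which does not close.

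The genuine gap is the step you yourself flag as the ``main obstacle'': you stop exactly where the paper's decisive observation lies. The paper does not need any sign analysis of the indefinite weight: it introduces $\Phi=\langle x,\nu\rangle+\cos\theta$, which vanishes on $\partial M$, and adds the free identity $0=\int_M\Delta\tfrac12\Phi^2=\int_M\Phi\,\Delta\Phi+|\nabla\Phi|^2$, with $\Delta\Phi=-|h|^2\langle x,\nu\rangle$, to the summed stability inequality; this converts the weight into $\int_M|h|^2|x^T|^2+|\nabla\Phi|^2\le0$, forcing $h\equiv0$. Your variant, using $\psi=\Phi$ as a second admissible test function, also works if pushed through: $Q(\psi,\psi)\ge0$ gives $\cos\theta\int_M|h|^2(\langle x,\nu\rangle+\cos\theta)\le0$, and adding this to the (correctly signed) summed inequality yields $\int_M|h|^2\,|x+\cos\theta\,\nu|^2\le0$; one then excludes an open set where $x=-\cos\theta\,\nu$ (differentiating gives $\delta_{kl}=-\cos\theta\,h_{kl}$, contradicting $H=0$ when $\cos\theta\ne0$, and contradicting immersivity when $\cos\theta=0$), so $h\equiv0$. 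As written, however, your proposal contains neither this combination nor any substitute for it, so it is incomplete precisely at the crucial point.
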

\begin{proof}
For convenience, we omit writing the volume form and the area form in an integral.\\
We know that $M$ is minimal and $\partial M$ intersects $\ss^n$ at a constant angle, say $\th\in (0, \pi)$.
For each constant vector field $a\in \rr^{n+1}$,
Define on $M$,
\begin{equation}\label{test-function1}
  \vp_a=\frac{1}{\sin \th}\,\<x, a\>+\cot\th \, \<\nu, a\>.
\end{equation}

By direct computation, by using \eqref{mu} and \eqref{nu}, one sees
\begin{eqnarray*}
\vp_a|_{\p M}&=&\frac{1}{\sin \th}\,\<\bar N, a\>+\cot\th (-\cos \th \, \<\bar N, a\>+\sin \th\, \<\bar \nu, a\>)
\\&=&\sin \th \, \<\bar N, a\>+\cos \th\, \<\bar \nu, a\>=\<\mu, a\>.
\end{eqnarray*}

On the other hand, since $M$ is minimal, we have $$\De \<x, a\>=0.$$ Thus, by integration by parts, we see
$$\int_{\p M}\vp_a= \int_{\p M}\<\mu, a\>= \int_{\p M}\n_{\mu} \<x, a\>= \int_{M}\De \<x, a\>=0.$$
Therefore, $\vp_a$ is an admissible test function in \eqref{stab-ineq1}.
It follows that
\begin{eqnarray}\label{stab-ineq2}
\int_M -\vp_a(\De \vp_a+|h|^2\vp_a) dA+\int_{\p M} \vp_a(\n_\mu \vp_a-q \vp_a)ds\ge 0.
\end{eqnarray}

We can compute that on $\p M$ by \cite{WX} Proposition 2.1,
\begin{eqnarray}\label{xeq1}
&&\n_\mu \vp_a= \frac{1}{\sin \th}\, \n_\mu \<x, a\>+\cot\th \, \n_\mu \<\nu,a\>
=\frac{1}{\sin \th}\,\<\mu, a\>+\cot\th \, h(\mu, \mu) \<\mu, a\>
=q\<\mu, a\>=q\vp_a.
\end{eqnarray}
Also, on $M$,
\begin{eqnarray*}
&&\De \vp_a= \frac{1}{\sin \th}\,\De \<x, a\>+\cot\th \,  \De \<\nu,a\>= -\cot\th\,|h|^2\<\nu,a\>.
\end{eqnarray*}
Thus
\begin{eqnarray}\label{xeq2}
&&\De \vp_a+|h|^2\vp_a=  -\cot\th\,|h|^2\<\nu,a\>+ |h|^2\left(\frac{1}{\sin \th}\,\<x, a\>+\cot\th \, \<\nu,a\>\right)=\frac{1}{\sin \th}\,|h|^2\<x, a\>.
\end{eqnarray}
Using \eqref{xeq1} and \eqref{xeq2} in \eqref{stab-ineq2}, we get that for each $a\in\rr^{n+1}$,
\begin{eqnarray}\label{xeq3}
\int_M \left(\frac{1}{\sin \th}\,\<x, a\>+\cot\th \, \<\nu,a\>\right) \frac{1}{\sin \th}\,|h|^2\<x, a\>\le 0.
\end{eqnarray}

We take $a$ to be the $n+1$ coordinate vectors $\{E_i\}_{i=1}^{n+1}$ in $\R^{n+1}$, and add \eqref{xeq3} for all $a$ to get
\begin{eqnarray}\label{xeq4}
&&\int_M \,|h|^2\left(|x|^2+\cos\th \, \<x,\nu\>\right) \le 0.
\end{eqnarray}
Let $\Phi=\<x,\nu\>+\cos\th$, we know that $\Phi|_{\p M}=0$ from \eqref{nu}. Thus $$\int_M\De \frac12\Phi^2= \int_{\p M}\Phi\n_\mu \Phi =0,$$
and combining with \eqref{xeq4}, we have
\begin{eqnarray}\label{xeq5}
&&\int_M \,|h|^2\left(|x|^2+\cos\th \, \<x,\nu\>\right)+\De  \frac12\Phi^2\le 0.
\end{eqnarray}
On the other hand, we have
$$\De \Phi=\De\<x,\nu\>=-|h|^2\<x,\nu\>.$$
It follows that
$$\De \frac12\Phi^2=\Phi\De\Phi+ |\n \Phi|^2= -|h|^2\<x,\nu\>^2-|h|^2\cos\th\,\<x,\nu\>+|\n \Phi|^2.$$
Thus, inequality \eqref{xeq5} reduces to
\begin{eqnarray*}
\int_M |h|^2|x^T|^2+|\n \Phi|^2\le 0.
\end{eqnarray*}
We conclude that $h\equiv 0$ which implies that $M$ is a totally geodesic $n$-ball in $\bar \bb^{n+1}$.
\end{proof}

\smallskip

\subsection{The hyperbolic case}\

\noindent Let $\hh^{n+1}$ be the simply connected hyperbolic space with curvature $-1$. We use here the  Poincar\'e ball model, which  is given by
\begin{eqnarray}\label{Poincare}
\hh^{n+1}=\left(\bb^{n+1}, \bar g=e^{2u} \delta\right), \quad e^{2u}=\frac{4}{(1-|x|^2)^2}.
\end{eqnarray}

In this subsection we use $\delta$ or $\<\cdot, \cdot\>$ to denote the Euclidean metric 
and the Cartesian coordinate in $\bb^{n+1}\subset \rr^{n+1}$.
Sometimes we also represent the hyperbolic metric, in terms of the polar coordinate with respect to the origin, as $$\bar g=dr^2+\sinh^2 rg_{\ss^n}.$$
We use $r=r(x)$ to denote the hyperbolic distance from the origin and denote $V_0=\cosh r$.
It is easy to verify that  \begin{eqnarray}\label{hypfunc}
V_0=\cosh r=\frac{1+|x|^2}{1-|x|^2},  \quad \sinh r=\frac{2|x|}{1-|x|^2}.
\end{eqnarray}
 The position function $x$, in terms of polar coordinate, can be represented by \begin{eqnarray}\label{rad-conf0}
x=\sinh r \p_r.
\end{eqnarray}
It is well-known that $x$ is a conformal Killing vector field with \begin{eqnarray}\label{rad-conf}
\bar\n x=V_0 \bar g.
\end{eqnarray}

Let $B^\hh_{R}$ be a ball in $\hh^{n+1}$ with hyperbolic radius ${R}\in (0, \infty)$. By an isometry of $\hh^{n+1}$, we may assume $B^{\hh}_{R}$ is centered at the origin.
$B^\hh_{R}$, when viewed as a set in $\bb^{n+1}\subset \rr^{n+1}$,  is the Euclidean ball with radius $R_{\rr}:=\sqrt{\frac{1-\arccosh R}{1+\arccosh R}} \in (0, 1)$.
The principal curvatures of $\p B^\hh_{R}$ are $\coth R$. The unit normal $\bar N$ to $\p B^{\hh}_R$ with respect to $\bar g$ is given by
 \begin{eqnarray}\label{NN}
\bar N=\frac{1}{\sinh R}x.
\end{eqnarray}
Moreover, for each constant vector field $a\in \mathbb{R}^{n+1}$, we can define a smooth vector field $Y_a$ in $\hh^{n+1}$ by
\begin{eqnarray}\label{KillY}
Y_a=\frac12(|x|^2+1)a-  \<x, a\>x.
\end{eqnarray}
From \cite{WX} Proposition 4.1, we know that $Y_a$ is a Killing vector field in $\hh^{n+1}$, i.e.,
\begin{eqnarray}\label{KillY1}
\frac12(\bar \n_i (Y_a)_j+ \bar \n_j (Y_a)_i)=0.
\end{eqnarray}
Denote function $V_{a}$ as follow
\begin{equation*}
  V_{a}= \frac{2\<x, a\>}{1-|x|^2}.
\end{equation*}
\begin{prop}\label{prop-WX}\cite{WX}
For any tangential vector field $Z$ on $\hh^{n+1}$, we have
\begin{eqnarray}
&&\bar \n_{Z}V_a=\bar g(Z, e^{-u}a)+ e^{-u}\bar g(x, e^{-u}a)\bar g(Z, x),\label{eq3h}
\\&&\bar \n_{Z} Y_a=e^{-u} \bar g(x, Z) a- e^{-u} \bar g(Z, a)x.\label{eq4h}
\\&&\De\bar g(x, \nu)=HV_0+ \bar g(x, \n H)-|h|^2\bar g(x, \nu), \label{eq-xnu-h}
\\&&\De V_a=n V_a-H \bar \n_\nu V_a,\label{eq-x-h}
\\&&\De \bar g(Y_a, \nu)=  -|h|^2\bar g(Y_a, \nu) +\bar g(Y_a, \n H)  +n\bar g(Y_a, \nu).\label{eq-Ynu-h}
\end{eqnarray}
\end{prop}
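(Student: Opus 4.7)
My plan is to verify the five identities by direct computation in the Poincar\'e ball model, exploiting the conformal relation $\bar g = e^{2u}\delta$ with $e^u = 2/(1-|x|^2)$. As a preliminary, a direct check gives $\bar{\n} u = e^{-u} x$ (as a hyperbolic gradient), so the Christoffel symbols of $\bar g$ in Cartesian coordinates take the form $\bar{\Gamma}^{k}_{ij} = e^u(\delta^k_i x_j + \delta^k_j x_i - \delta_{ij} x_k)$. The key algebraic identity used throughout is
\begin{equation*}
e^{-u}(1+V_0)=1,
\end{equation*}
which follows at once from $V_0 = (1+|x|^2)/(1-|x|^2)$ and $e^{-u}=(1-|x|^2)/2$.

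For \eqref{eq3h} and \eqref{eq4h}, I will differentiate the Euclidean expressions $V_a = e^u\langle x,a\rangle$ and $Y_a = \tfrac12(|x|^2+1)a - \langle x,a\rangle x$ componentwise, using the formula for $\bar{\Gamma}^k_{ij}$ above. In the $Y_a$ case, most terms cancel in pairs and the coefficient $1 + \tfrac{1}{2}e^u(|x|^2+1)$ appears, which collapses to $e^u$ by the algebraic identity; converting $\langle\cdot,\cdot\rangle$ to $\bar g(\cdot,\cdot)$ via the conformal factor gives exactly the stated forms.

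For \eqref{eq-xnu-h}, I will use that $x$ is a conformal Killing field with $\bar{\n} x = V_0 \bar g$. Decomposing $x = x^T + f\nu$ with $f = \bar g(x,\nu)$ and projecting $\bar{\n}_{e_i} x = V_0 e_i$ along and normal to $M$ via the Gauss--Weingarten formulas gives
\begin{equation*}
\n_i f = h(e_i, x^T), \qquad \n_i (x^T)_k = V_0\, g_{ik} - f\, h_{ik}.
\end{equation*}
Tracing $\De f = \n^i \n_i f$ and invoking the Codazzi identity, which in constant sectional curvature reduces to $\n^i h_{ij} = \n_j H$, produces the stated equation.

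For \eqref{eq-x-h} and \eqref{eq-Ynu-h}, my plan is first to establish the ambient Hessian identity $\bar{\n}^2 V_a = V_a\, \bar g$, and hence $\bar{\De} V_a = (n+1) V_a$, by differentiating $\bar{\n} V_a = e^{-u}(a + V_a\, x)$ supplied by \eqref{eq3h}; the computation uses $\bar{\n}_W x = V_0 W$ and the Christoffel-level formula for $\bar{\n}_W a$, with the cross terms collapsing via $e^{-u}(1+V_0)=1$. Then the ambient-to-intrinsic Laplacian relation
\begin{equation*}
\De_M F = \bar{\De} F - \bar{\n}^2 F(\nu,\nu) - H\, \bar{\n}_\nu F
\end{equation*}
instantly yields \eqref{eq-x-h}. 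For \eqref{eq-Ynu-h}, I will compute $\n_i \bar g(Y_a,\nu)$ using the Killing antisymmetry of $\bar{\n} Y_a$ together with \eqref{eq4h}, and then differentiate a second time via Gauss, Weingarten, Codazzi and the fact that $\overline{{\rm Ric}} = -n\bar g$ on $\hh^{n+1}$; this reproduces the standard Jacobi-type equation for the support function of a Killing field, with $-|h|^2 - \overline{{\rm Ric}}(\nu,\nu) = -|h|^2 + n$ producing the coefficient on the right. The main obstacle I anticipate is the bookkeeping in \eqref{eq-Ynu-h}: tracking tangential projections of $x$, $a$, and $Y_a$ and cancelling them against one another. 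The symmetry of $h$, the antisymmetry of $\bar{\n} Y_a$, and the curvature identity for sectional curvature $-1$ should combine cleanly to leave only the terms claimed.
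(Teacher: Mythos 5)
Your proposal is correct, and I could verify its key ingredients: $\bar\n u=e^{-u}x$, the conformal Christoffel symbols $\bar\Gamma^k_{ij}=e^u(\delta^k_ix_j+\delta^k_jx_i-\delta_{ij}x_k)$, the identity $e^{-u}(1+V_0)=1$ (equivalently $1+\tfrac12 e^u(|x|^2+1)=e^u$, which is exactly the coefficient you need to collapse in \eqref{eq4h}), the relations $\n_i\bar g(x,\nu)=h(e_i,x^T)$ and $\n_i(x^T)_j=V_0g_{ij}-\bar g(x,\nu)h_{ij}$ coming from $\bar\n x=V_0\bar g$ with the paper's convention $h(X,Y)=\bar g(\bar\n_X\nu,Y)$, and the restriction formula $\De_MF=\bar\De F-\bar\n^2F(\nu,\nu)-H\bar\n_\nu F$ which, together with $\bar\n^2V_a=V_a\bar g$, gives \eqref{eq-x-h} with the correct sign. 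Note, however, that the paper itself contains no proof of this proposition: it is quoted verbatim from \cite{WX}, where the identities are established by essentially the same ball-model computations and submanifold identities you describe. So your write-up would supply a self-contained verification rather than an alternative to an argument in this paper. Two small organizational remarks: for \eqref{eq3h} no Christoffel symbols are needed at all, since $V_a$ is a scalar and $\bar\n_ZV_a=Z(V_a)$ can be computed with Euclidean partials of $e^u\langle x,a\rangle$; and once \eqref{eq3h}--\eqref{eq4h} are in place, the last three identities are most cleanly derived exactly as you propose, from the three structural facts $\bar\n x=V_0\bar g$ (conformal Killing), $\bar\n^2V_a=V_a\bar g$ (static potential), and the antisymmetry of $\bar\n Y_a$ (Killing), combined with Gauss--Weingarten, Codazzi in constant curvature ($\n^ih_{ij}=\n_jH$), and $\overline{\rm Ric}=-n\bar g$, which yields the Jacobi-type equation \eqref{eq-Ynu-h} with coefficient $n-|h|^2$ as claimed.
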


\begin{prop}\label{prop-3.1}Let $x: M\to\bar B^\hh_{R}$ be an  isometric immersion into the hyperbolic ball $B^\hh_{R}$ with zero mean curvature $H=0$, whose boundary $\p M$ intersects $\p B^\hh_{R}$ at a constant angle $\th \in (0, \pi)$.
For each constant vector field $a\in \rr^{n+1}$ define
\begin{eqnarray}\label{test-function}
\varphi_{a}=\frac{1}{\sin\theta\, \sinh R}\bar{g}(Y_{a}, x)+\cot\theta\,\,\bar{g}(Y_{a},\nu)\quad \text{along} \,M.
\end{eqnarray}
Then $\varphi_a$ satisfies
\begin{equation}\label{bdy-zero}
\int_{\partial M}\varphi_{a}ds=0
\end{equation}
Along $\p M$, we have
\begin{equation}\label{Hyp-bdy1}
  {\nabla}_{\mu}\varphi_{a}=q\varphi_{a}
\end{equation}
where
\begin{eqnarray}\label{qq2}
&&q=\frac{1}{\sin \th}\,\coth R+ \cot \th \, h(\mu, \mu).
\end{eqnarray}
\end{prop}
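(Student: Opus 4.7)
The plan is to follow the Euclidean argument of Section \ref{sec3} as closely as possible, with $\langle x,a\rangle$ replaced by the hyperbolic function $V_{a}=\bar g(Y_{a}, x)$ and $\langle \nu, a\rangle$ replaced by $\bar g(Y_{a},\nu)$. All the infrastructure needed is supplied by Proposition \ref{prop-WX}: the Killing identity \eqref{eq4h}, the gradient formula \eqref{eq3h}, and the minimality assumption $H=0$. The proof then splits into two independent pieces.

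For the vanishing integral \eqref{bdy-zero}, I would first simplify the boundary trace of $\varphi_{a}$. Along $\partial M$ one has $x=\sinh R\cdot\bar N$ by \eqref{NN}, so $V_{a}=\sinh R\cdot\bar g(Y_{a},\bar N)$. Substituting this together with \eqref{nu} into the definition of $\varphi_{a}$, the coefficients of $\bar g(Y_{a},\bar N)$ and $\bar g(Y_{a},\bar\nu)$ collapse to $\sin\theta$ and $\cos\theta$ respectively, yielding $\varphi_{a}|_{\partial M}=\bar g(Y_{a},\mu)$ via \eqref{mu}. Next I would invoke the Killing property of $Y_{a}$: tracing $\bar\nabla Y_{a}$ over an adapted frame $\{e_{1},\ldots,e_{n},\nu\}$ and exploiting its antisymmetry in the normal direction gives $\mathrm{div}_{M}Y_{a}^{T}=-H\,\bar g(Y_{a},\nu)=0$. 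The divergence theorem then produces $\int_{\partial M}\bar g(Y_{a},\mu)\,ds=\int_{\partial M}\bar g(Y_{a}^{T},\mu)\,ds=0$, which is exactly \eqref{bdy-zero}.

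For the boundary derivative identity \eqref{Hyp-bdy1}, I would compute $\nabla_{\mu}\varphi_{a}$ term by term on $\partial M$. The derivative $\nabla_{\mu}V_{a}$ comes directly from \eqref{eq3h}, while $\nabla_{\mu}\bar g(Y_{a},\nu)=\bar g(\bar\nabla_{\mu}Y_{a},\nu)+\bar g(Y_{a},\bar\nabla_{\mu}\nu)$ uses \eqref{eq4h} for the first summand and the Weingarten decomposition $\bar\nabla_{\mu}\nu=\sum_{k}h(\mu,e_{k})e_{k}$ for the second. A crucial ingredient is that $\mu$ is a principal direction of the shape operator of $M$ at every point of $\partial M$, which collapses $\bar g(Y_{a},\bar\nabla_{\mu}\nu)$ into $h(\mu,\mu)\bar g(Y_{a},\mu)$. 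This principal-direction property --- the hyperbolic analogue of the identity invoked from \cite{WX} in the Euclidean proof --- follows from the constant contact angle together with the umbilicity $h^{\partial B}=\coth R\cdot g^{\partial B}$ of the geodesic sphere $\partial B^{\hh}_{R}$.

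The main obstacle is the bookkeeping in this last step. After applying \eqref{eq3h}--\eqref{eq4h}, several residual terms involving $\bar g(x,a)$, $\bar g(x,\mu)=\sinh R\sin\theta$, and the conformal factor $e^{-u}$ appear. To verify that these reorganize into $q\cdot\bar g(Y_{a},\mu)$, I would work in the adapted orthonormal frame $\{\bar N,\bar\nu,e_{2},\ldots,e_{n}\}$, expand $Y_{a}$ through its components $\alpha=\bar g(Y_{a},\bar N)$ and $\beta=\bar g(Y_{a},\bar\nu)$, and repeatedly apply the Poincar\'e-model identity $e^{-u}(1+\cosh R)=1$ along $\partial B^{\hh}_{R}$, together with the simplification $\tfrac{1}{\sinh R\cosh R}+\tanh R=\coth R$. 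With this accounting, the expression telescopes exactly into $\bigl(\tfrac{1}{\sin\theta}\coth R+\cot\theta\,h(\mu,\mu)\bigr)\bar g(Y_{a},\mu)=q\varphi_{a}$, which is \eqref{Hyp-bdy1}.
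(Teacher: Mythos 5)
Your proposal is correct and follows essentially the same route as the paper: the trace identity $\varphi_a|_{\partial M}=\bar g(Y_a,\mu)$ plus the Killing property and the divergence theorem for \eqref{bdy-zero}, and \eqref{eq3h}--\eqref{eq4h} together with the principal-direction property of $\mu$ (the \cite{WX} Proposition 2.1 ingredient) followed by the same hyperbolic/conformal-factor simplifications for \eqref{Hyp-bdy1}. The identities you cite ($e^{-u}(1+\cosh R)=1$ and $1+\sinh^2R=\cosh^2R$ in the form $\tfrac{1}{\sinh R\cosh R}+\tanh R=\coth R$) are exactly the ones the paper's computation uses, only packaged slightly differently.
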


\begin{proof}
In this proof we always take value along $\p M$ and use \eqref{mu} and \eqref{nu}.
Firstly, from (\ref{KillY1}) we get
\begin{eqnarray}\label{Y-a-boundary}
\int_{M}\div_{M}(Y^{T}_{a})=\int_{M}\div_{M}(Y_{a}-\bar{g}(Y_{a},\nu)\nu)=\int_{M}\div_{M}Y_{a}-H\bar{g}(Y_{a},\nu) dA=0
\end{eqnarray}
On the other hand, using integration by parts, we see
\begin{eqnarray}\label{Y-a-boundary1}
\int_{M}\div_{M}(Y^{T}_{a})=\int_{\partial M}\bar{g}(Y_{a}^{T}, \mu)ds=\int_{\partial M}\bar{g}(Y_{a}, \mu)ds
\end{eqnarray}
Combining (\ref{Y-a-boundary}) with (\ref{Y-a-boundary1}), we get
\begin{eqnarray}\label{Y-a-boundary2}
\int_{\partial M}\bar{g}(Y_{a}, \mu)ds=0
\end{eqnarray}
Applying (\ref{mu}), (\ref{nu}) and (\ref{NN}), we have
\begin{equation}\label{test-bdy}
 \varphi_{a}=\bar{g}(Y_{a},\mu) \quad \text{on}\, \,\partial M
\end{equation}
Using (\ref{Y-a-boundary2}) and (\ref{test-bdy}), we get the first equation (\ref{bdy-zero}).\\

Next, note that
\begin{eqnarray}\label{Ya-x}
&&\bar g(Y_a, x)= e^{2u}\<Y_a, x\>=e^{2u}\frac12(1-|x|^2)\<x, a\>=e^{-u}\bar g(x, a)=V_a.
\end{eqnarray}
Therefore,
\begin{alignat}{2}\label{phi0}
\varphi_{a}=\frac{1}{\sin\theta\,\sinh R}\bar{g}(Y_{a},x )+\cot\theta\,\,\bar{g}(Y_{a},\nu)
=\frac{1}{\sin \theta\,\sinh R}V_{a}+\cot\theta\,\,\bar{g}(Y_{a},\nu).
\end{alignat}
By (\ref{eq3h}), (\ref{eq4h}) and \cite{WX} Proposition 2.1, we can compute that
\begin{alignat}{2}
\bar{\nabla}_{\mu}\varphi_{a}&=\frac{1}{\sin \theta\,\sinh R}\bar{\nabla}_{\mu}V_{a}+\cot\theta\,(\bar{g}(\bar{\nabla}_{\mu}Y_{a},\nu)+\bar{g}(Y_{a},\bar{\nabla}_{\mu}\nu))\nonumber\\
&=\frac{1}{\sin \theta \sinh R}(e^{-u}\bar{g}(\mu,a)+e^{-2u}\bar{g}(x,a)\bar{g}(x,\mu))\nonumber\\
&\quad\quad+\cot\theta\,\cdot e^{-u}[\bar{g}(x,\mu)\bar{g}(\nu,a)-\bar{g}(\mu,a)\bar{g}(x,\nu)]+\cot\theta\, h(\mu,\mu)\bar{g}(Y_{a},\mu)\nonumber.
\end{alignat}
Using $\nu=-\frac{1}{\cos \th}\bar N+\tan \th\, \mu$ and $x=\sinh R \, \bar N$, we obtain
\begin{alignat}{2}\label{phi2}
\cot\theta\,\cdot e^{-u}[\bar{g}(x,\mu)\bar{g}(\nu,a)-\bar{g}(\mu,a)\bar{g}(x,\nu)]
&=e^{-u}[\frac{1}{\sin \theta}\sinh R\cdot\bar{g}(\mu,a)-\frac{1}{\sin \theta \sinh R}\bar{g}(x,a)\bar{g}(x,\mu)]\nonumber\\
&=\frac{1}{\sin \theta \,\sinh R}(e^{-u}sinh^{2}R\cdot\bar{g}(\mu,a)-e^{-u}\bar{g}(x,a)\bar{g}(x,\mu))\nonumber
\end{alignat}
Therefore, applying relationship $\sinh^{2}R+1=\cosh^{2}R$, we have
\begin{alignat}{2}
\bar{\nabla}_{\mu}\varphi_{a}&=\frac{1}{\sin \theta \sinh R}\cosh^{2}R\cdot e^{-u}\bar{g}(\mu,a)+\frac{1}{\sin \theta \sinh R}(e^{-2u}-e^{-u})\bar{g}(x,a)\bar{g}(x,\mu)+\cot\theta\, h(\mu,\mu)\bar{g}(Y_{a},\mu)\nonumber\\
&=\frac{1}{\sin \theta \sinh R}\cosh R\,\bar{g}(\frac{1}{2}(|x|^{2}+1)a-\langle x,a\rangle x,\mu)+\cot\theta\, h(\mu,\mu)\bar{g}(Y_{a},\mu)\nonumber\\
&=\frac{1}{\sin \theta}\coth R\,\bar{g}(Y_{a},\mu)+\cot\theta\, h(\mu,\mu)\bar{g}(Y_{a},\mu)\nonumber\\
&=(\frac{1}{\sin \theta}\coth R+\cot\theta\, h(\mu,\mu))\bar{g}(Y_{a},\mu)\nonumber
\end{alignat}
By (\ref{test-bdy}), we complete this proposition.
\end{proof}
\begin{theorem}\label{thm-h} Assume $x: M\to B_R^{\hh}\subset(\bb^{n+1}, \bar g)$ is an immersed type-II stable hypersurface in the ball $B_R^{\hh}$ with zero mean curvature $H=0$ and constant contact angle $\th\in (0, \pi)$. Then $x$ is totally geodesic.
\end{theorem}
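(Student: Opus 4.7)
\textbf{Proof proposal for Theorem \ref{thm-h}.} The plan is to run the Euclidean argument of the previous subsection verbatim in the hyperbolic setting, using the test function $\varphi_a$ supplied by Proposition \ref{prop-3.1}. That proposition already provides two of the three inputs: the admissibility condition $\int_{\partial M}\varphi_a\,ds = 0$ makes $\varphi_a$ a legal test function in the stability inequality \eqref{stab-ineq0}, and the boundary identity $\nabla_\mu \varphi_a = q\varphi_a$ (with $q$ as in \eqref{qq2}) causes the boundary integrand $\varphi_a(\nabla_\mu\varphi_a - q\varphi_a)$ in the second variation to vanish pointwise along $\partial M$. Thus the whole content of the second variation collapses to an interior integral.

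The first real step is to compute the Jacobi-type expression $\Delta \varphi_a + (|h|^2 + \overline{\text{Ric}}(\nu,\nu))\varphi_a$. Using $\overline{\text{Ric}} = -n\bar g$ on $\hh^{n+1}$, together with $H = 0$, the formulas \eqref{eq-x-h} and \eqref{eq-Ynu-h} of Proposition \ref{prop-WX} give $\Delta V_a = nV_a$ and $\Delta \bar g(Y_a,\nu) = (n - |h|^2)\bar g(Y_a,\nu)$. Substituting into $\varphi_a = \frac{1}{\sin\theta\sinh R}V_a + \cot\theta\,\bar g(Y_a,\nu)$ and collecting terms, the $n$-contributions and $\cot\theta\cdot|h|^2\bar g(Y_a,\nu)$ contributions cancel, leaving the clean identity
\begin{equation*}
\Delta\varphi_a + (|h|^2 - n)\varphi_a = \frac{|h|^2}{\sin\theta\sinh R}\,V_a.
\end{equation*}
Inserting this into \eqref{stab-ineq0} and summing over the standard Euclidean basis $\{E_i\}_{i=1}^{n+1}$ of $\mathbb{R}^{n+1}$, the algebraic identities $\sum_i V_{E_i}^2 = \bar g(x,x) = \sinh^2 r$ and $\sum_i V_{E_i}\,\bar g(Y_{E_i},\nu) = \bar g(x,\nu)$ (obtained by direct conformal computation, using $V_{E_i} = e^u\langle x,E_i\rangle$ and the definition of $Y_a$) yield
\begin{equation*}
\int_M |h|^2\bigl[\sinh^2 r + \sinh R\cos\theta\cdot\bar g(x,\nu)\bigr]\,dA \leq 0.
\end{equation*}

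The final step disposes of the cross term exactly as in the Euclidean case. Let $\Psi := \bar g(x,\nu) + \sinh R\cos\theta$. Along $\partial M$, relations \eqref{nu} and $x = \sinh R\,\bar N$ give $\Psi|_{\partial M} \equiv 0$. Using \eqref{eq-xnu-h} with $H=0$, we get $\Delta\Psi = -|h|^2\bar g(x,\nu)$, so integration by parts on $\tfrac12\Psi^2$ produces the identity $\int_M |\nabla\Psi|^2 = \int_M |h|^2\bar g(x,\nu)^2 + |h|^2\sinh R\cos\theta\cdot\bar g(x,\nu)$. Adding this to the preceding inequality cancels the cross term and leaves
\begin{equation*}
\int_M |h|^2\bigl(\sinh^2 r - \bar g(x,\nu)^2\bigr) + |\nabla\Psi|^2\,dA = \int_M |h|^2|x^T|^2 + |\nabla\Psi|^2\,dA \leq 0,
\end{equation*}
where $x^T$ denotes the tangential part of $x$ along $M$. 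Hence $|\nabla\Psi|\equiv 0$ and $|h|^2|x^T|^2 \equiv 0$. The first gives $\Psi\equiv 0$; the second forces $h \equiv 0$, since $|x^T|^2 = \sinh^2 r - \sinh^2 R\cos^2\theta$ vanishing on an open set would pin $M$ inside a geodesic sphere, contradicting minimality. Therefore $M$ is totally geodesic.

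The hardest part is the bookkeeping in the middle step: tracking the conformal factor $e^{2u}$ to verify both summation identities, and ensuring that the coefficient $\sinh R\cos\theta$ in the cross term matches precisely the boundary value of $\bar g(x,\nu)$ so that $\Psi$ does the work. For the spherical case $\mathbb{S}^{n+1}$ one would proceed in parallel, using instead the conformal ball model $\bb^{n+1}$ with conformal factor $\frac{4}{(1+|x|^2)^2}$ and the corresponding Killing field $Y_a$; the sign changes in $\overline{\text{Ric}} = n\bar g$ and in the analogue of $V_0$ are absorbed into the modified identities of Proposition \ref{prop-WX}, and the rest of the argument is formally identical.
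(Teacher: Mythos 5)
Your proposal is correct and follows essentially the same route as the paper: the test function $\varphi_a$ of Proposition \ref{prop-3.1}, the identity $\Delta\varphi_a+(|h|^2-n)\varphi_a=\frac{|h|^2}{\sin\theta\sinh R}V_a$, summation over the coordinate directions using $\sum_a V_aY_a=x$ and $\sum_a V_a^2=\bar g(x,x)$, and the auxiliary function $\Psi=\bar g(x,\nu)+\cos\theta\sinh R$ (the paper's $\Phi$ up to sign) to absorb the cross term. Your closing argument that $|h|^2|x^T|^2\equiv 0$ forces $h\equiv 0$ (ruling out the geodesic-sphere alternative via minimality) is a slightly more explicit justification of the paper's final step, but the proof is the same.
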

\begin{proof}
The stability inequality \eqref{second} reduces to
\begin{eqnarray}\label{stab-eq1-h}
-\int_M \varphi (\Delta \varphi+|h|^2\varphi-n\varphi)+\int_{\p M} (\n_\mu \varphi-q\, \varphi)\varphi\ge 0
\end{eqnarray}
for all function $\int_{\partial M}\varphi ds=0$, where $q$ is given by \eqref{qq2} since $\p B_R^{\hh}$ has constant principal curvature $\coth R$.\\
For each constant vector field $a\in \rr^{n+1}$, we consider
 $$\varphi_{a}=\frac{1}{\sin \theta\, \sinh R}\bar{g}(Y_{a}, x)+\cot\theta\,\bar{g}(Y_{a},\nu)$$ along $M$.

Equation (\ref{bdy-zero}) tells us that $\int_{\partial M} \varphi_a\, ds=0$. Therefore, $\varphi_a$ is an admissible function for testing stability.

Using (\ref{eq-x-h}) and (\ref{eq-Ynu-h}), noting that $H=0$ and (\ref{Ya-x}), we compute that
\begin{alignat}{2}
\Delta\varphi_{a}&=\frac{1}{\sin \theta \sinh R}\Delta V_{a}+\cot\theta\,\Delta \bar{g}(Y_{a},\nu)\nonumber\\
&=\frac{1}{\sin \theta \sinh R}(nV_{a}-H\bar{\nabla}_{\nu}V_{a})+\cot\theta\,(-|h|^{2}\bar{g}(Y_{a},\nu)+\bar{g}(Y_{a},\nabla H)+n\bar{g}(Y_{a},\nu))\nonumber\\
&=\frac{1}{\sin \theta \sinh R}nV_{a}-\cot\theta\,|h|^{2}\bar{g}(Y_{a},\nu)+n\cot\theta\, \bar{g}(Y_{a},\nu).\nonumber
\end{alignat}
Therefore, we have
\begin{alignat}{2}
  \Delta\varphi_{a}-n\varphi_{a}+|h|^{2}\varphi_{a}&=-\cot\theta\,|h|^{2}\bar{g}(Y_{a},\nu)+|h|^{2}(\frac{1}{\sin \theta \sinh R}V_{a}+\cot\theta\, \bar{g}(Y_{a},\nu))\nonumber\\
  &=\frac{1}{\sin \theta\cdot \sinh R}V_{a}|h|^{2}\nonumber
\end{alignat}
and in turn
\begin{equation}\label{inter-2}
  \varphi_{a}(\Delta\varphi_{a}-n\varphi_{a}+|h|^{2}\varphi_{a})=\frac{1}{\sin^{2}\theta \cdot \sinh^{2}R}V_{a}^{2}|h|^{2}+\frac{\cos\theta\,}{\sin^{2}\theta\cdot \sinh R}V_{a}\,\bar{g}(Y_{a},\nu)|h|^{2}.
\end{equation}
From \eqref{Hyp-bdy1}, we know
\begin{eqnarray}\label{eq-phi1-h}
\n_\mu \varphi_a-q\varphi_a=0 \quad\text{along}\,\, \partial M.
\end{eqnarray}

Inserting \eqref{inter-2} and \eqref{eq-phi1-h} into the stability condition \eqref{stab-eq1-h}, we get for any $a\in \mathbb{R}^{n+1}$,
\begin{equation}\label{sec-a}
  \frac{1}{\sin^{2}\theta\cdot \sinh^{2}R}\int_{M}(\cos\theta\,\cdot \sinh R\,\bar{g}(V_{a}Y_{a},\nu)+V_{a}^{2})|h|^{2}dA\leq0
\end{equation}

We take $a$ to be the $n+1$ coordinate vectors $\{E_i\}_{i=1}^{n+1}$ in $\R^{n+1}$.
Noticing  that $V_a=\frac{2\<x, a\>}{1-|x|^2}$, $Y_a=\frac12(|x|^2+1)a- \<x, a\>x$,
we have \begin{eqnarray*}
&&\sum_{a=1}^{n+1} V_a Y_a=x,
\\&&\sum_{a=1}^{n+1}V_a^2=\frac{4|x|^2}{(1-|x|^2)^2}=\bar g(x, x).
\end{eqnarray*}
Therefore, by summing \eqref{sec-a} for all $a$, we get
\begin{equation}\label{sum-a}
  \int_{M}(\cos\theta\,\cdot \sinh R\,\bar{g}(x,\nu)+\bar{g}(x,x))|h|^{2}dA\leq0
\end{equation}
As in the Euclidean case, we introduce an auxiliary function
\begin{equation*}
 \Phi=-(\bar{g}(x,\nu)+\cos\theta\, \,\sinh R).
\end{equation*}
From (\ref{eq-xnu-h}) and $H=0$, we get
\begin{equation}\label{Lap2}
  \Delta\Phi=\bar{g}(x,\nu)|h|^{2}
\end{equation}
Note that  $\Phi|_{\p M}=0$. Thus we have
\begin{eqnarray*}
\int_M \De \frac12\Phi^2=\int_{\p M} \Phi \n_\mu\Phi=0.
\end{eqnarray*}
Adding this to \eqref{sum-a}, using \eqref{Lap2}, we have
\begin{eqnarray*}
0&\ge&
\int_{M}(\cos\theta\,\cdot \sinh R\,\bar{g}(x,\nu)+\bar{g}(x,x))|h|^{2}+\Delta\frac12\Phi^{2}
\\&=&\int_{M}\bar{g}(x^{T}, x^{T})|h|^{2}+|\nabla\Phi|^{2}
\\&\ge &0.
\end{eqnarray*}
This implies $x: M\to  B_R^{\hh}$ is totally geodesic. The proof is completed.
\end{proof}

\smallskip

\subsection{The spherical case}\

In this subsection, we sketch  the necessary  modifications in the case that the ambient space is the spherical space form $\ss^{n+1}$.
We use the model $$(\rr^{n+1}, \bar g_{\ss}=e^{2u} \delta) \quad \hbox{ with  }u(x)=\frac{4}{(1+|x|^2)^2},$$to represent $\ss^{n+1}\setminus\{\mathcal{S}\}$, the unit sphere without the south pole.
 Let $B_R^{\ss}$ be a ball in $\ss^{n+1}$ with radius $R\in (0, \pi)$ centered at the north pole. The corresponding $R_\rr=\sqrt{\frac{1-\cos R}{1+\cos R}}\in (0, \infty)$.
 the Killing vector field $Y_a$ in this case are
\begin{eqnarray}\label{Killsph}
Y_a=\frac12(1-|x|^2)a+\<x, a\>x.
\end{eqnarray}
The crucial functions $V_0$ and $V_a$ in this case are
$$V_0= \cos r=\frac{1-|x|^2}{1+|x|^2}, \quad V_a=\frac{2\<x, a\>}{1+|x|^2}.$$
Similarly as the hyperbolic case, these $(n+2)$ functions span the vector space  $$\{V\in C^2(\ss^{n+1}\setminus\{\mathcal{S}\}): \bar \n^2 V= - V\bar g\}.$$
Using $X_a$, $Y_a$, $V_0$ and $V_a$, the proof goes through parallel to the hyperbolic case.
The method works for balls with any radius $R\in (0, \pi)$. Compare to the hyperbolic case,  in this case $V_0=\cos r$ can be negative when $R\in (\frac{\pi}{2}, \pi)$. Nevertheless, by going through the proof, we see this does not affect the issue on stability.
\qed

\

\section{Topological restriction on type-II stable stationary surface}\label{sec4}
In this section,
we use two kinds of balancing arguments, similar to Ros-Vergasta \cite{RV} and Nunes \cite{Nu}, to get topological restriction on type-II stable stationary surfaces.

{\bf Step 1.}
Let $\hat{M}$ be a compact Riemann surface obtained from $M$ by attaching a conformal disk at any connected component of $\p M$. Then there exists a non-trivial conformal map
$\hat{\psi}:\hat{M}\to \ss^2$ such that (see \cite{GH} p. 261)
\begin{eqnarray*}
{\rm deg}(\hat{\psi})\le 1+\left[\frac{g+1}{2}\right].
\end{eqnarray*}
Let $\psi: M\to \ss^2\subset\rr^3$ be the restriction of $\hat{\psi}$ to $M$.
Then $\psi|_{\p M}: \p M\to \ss^2$ be a conformal immersion. By Hersch \cite{Hersch} and Li-Yau \cite{LiYau}'s result, by combining $\psi|_{\p M}$ with a conformal diffeomorphism of $\ss^2$, we can assume
$$\int_{\p M}\psi^i ds=0, \quad i=1,2,3,$$
where $\psi^i$ denotes the coordinate function of $\psi$ in $\rr^3$.

Using $\psi^i, i=1,2,3$ as admissible test functions in \eqref{stab-ineq0}, we get
\begin{eqnarray*}
\int_M |\n \psi^i|^2-(|h|^2+\overline{{\rm Ric}}(\nu,\nu))(\psi^i)^2 dA-\int_{\p M}  \left[\frac{1}{\sin \th}h^{\p B}(\bar \nu, \bar \nu)+\cot \th \, h(\mu,\mu)\right]  (\psi^i)^2 ds\ge 0.
\end{eqnarray*}
Summing up the above inequalities for $i=1,2,3$, using $\psi\in \ss^2$ we get
\begin{eqnarray}\label{geq1}
\int_M |\n \psi|^2-(|h|^2+\overline{{\rm Ric}}(\nu,\nu)) dA-\int_{\p M} \left[\frac{1}{\sin \th}h^{\p B}(\bar \nu, \bar \nu)+\cot \th \, h(\mu,\mu)\right] ds\ge 0.
\end{eqnarray}
By conformality of $\psi$, we have
\begin{eqnarray}\label{geq2}
\int_M |\n \psi|^2 dA< \int_{\hat M} |\n \hat{\psi}|^2 dA= 2{\rm Area}(\hat{\psi}(\hat{M}))\le 8\pi\left(1+\left[\frac{g+1}{2}\right]\right).
\end{eqnarray}

Using the Gauss equation and the Gauss-Bonnet formula, taking into account that $H=0$, we have
\begin{eqnarray*}
&&\int_{M} -|h|^2+\overline{{\rm R}}-2\overline{{\rm Ric}}(\nu,\nu) dA+2\int_{\p M}\kappa_g ds\\&=&2\int_M K dA+2\int_{\p M}\kappa_g ds=4\pi \chi(M)=4\pi(2-2g-r).
\end{eqnarray*}
Here $\kappa_g$ is the geodesic curvature of $\p M$ in $M$ and $K$ is the Gauss curvature of $M$.
It follows that
\begin{eqnarray}\label{geq3}
&&\int_{M} -(|h|^2+\overline{{\rm Ric}}(\nu,\nu)) dA=4\pi(2-2g-r)-2\int_{\p M}\kappa_g ds-\int_M (\overline{{\rm R}}-\overline{{\rm Ric}}(\nu,\nu)) dA.
\end{eqnarray}
On the other hand, from \eqref{mu} and \eqref{nu}, we have
\begin{eqnarray*}
\nu=\tan\th\,\mu-\frac{1}{\cos \th}\bar N.
\end{eqnarray*}
Thus
\begin{eqnarray*}
h(\mu,\mu)&=&H-h(e,e)=-\<\bar \n_e \nu, e\>
\\&=&-\tan\th\,\<\bar \n_e \mu, e\>+\frac{1}{\cos \th}\<\bar \n_e\bar N, e\>
\\&=&-\tan\th\,\kappa_g+\frac{1}{\cos \th}h^{\p B}(e,e),
\end{eqnarray*}
where $e\in T(\p M)$.
In turn,
\begin{eqnarray}\label{geq4}
-\int_{\p M} \left[\frac{1}{\sin \th}h^{\p B}(\bar \nu, \bar \nu)+\cot \th \, h(\mu,\mu)\right] ds=\int_{\p M} \left(-\frac{1}{\sin \th}H^{\p B}+\kappa_g\right) ds.
\end{eqnarray}
Taking into account of \eqref{geq2}, \eqref{geq3}, \eqref{geq4}  in \eqref{geq1}, we have
\begin{eqnarray}\label{geq5}
\int_M (\overline{{\rm R}}-\overline{{\rm Ric}}(\nu,\nu)) dA+\int_{\p M} \left(\kappa_g+\frac{1}{\sin \th}H^{\p B}\right) ds < 4\pi\left(4-2g-r+2\left[\frac{g+1}{2}\right]\right).
\end{eqnarray}

{\bf Step 2.} By a result of Ahlfors \cite{Ah} and Gabard \cite{Gab}, there exists a conformal branched cover $\eta: M\to \ss_+^2$ such that
\begin{eqnarray*}
{\rm deg}(\eta)\le g+r.
\end{eqnarray*}
$\eta^3=0$ on $\p M$. Also, by combining $\eta|_{\p M}$ with a conformal diffeomorphism of $\ss^1=\p \ss_+^2$, we can assume
 $$\int_{\p M}\eta^{i}ds=0, i=1,2.$$
Thus $\eta^{i}, i=1,2,3$ are admissible test functions in \eqref{stab-ineq0}. We obtain
\begin{eqnarray}\label{geq6}
\int_M |\n \eta|^2-(|h|^2+\overline{{\rm Ric}}(\nu,\nu)) dA-\int_{\p M} \left[\frac{1}{\sin \th}h^{\p B}(\bar \nu, \bar \nu)+\cot \th \, h(\mu,\mu)\right] ds\ge 0.
\end{eqnarray}
By conformality of $\eta$, we have
\begin{eqnarray}\label{geq7}
\int_M |\n \eta|^2 dA= 2{\rm Area}(\eta(M))\le 4\pi\left(g+r\right).
\end{eqnarray}
Using the same argument as Step 1, we get
\begin{eqnarray}\label{geq8}
\int_M (\overline{{\rm R}}-\overline{{\rm Ric}}(\nu,\nu)) dA+\int_{\p M} \left(\kappa_g+\frac{1}{\sin \th}H^{\p B}\right) ds\le 4\pi\left(2-g\right).
\end{eqnarray}

\noindent{\bf Proof of Theorem \ref{thm0.3}}

 Consider the free boundary case, i.e. $\th=\frac{\pi}{2}$.
In this case one has \begin{eqnarray*}
\kappa_g=h^{\p B}(e,e).
\end{eqnarray*}
Hence, from \eqref{geq5} and \eqref{geq8}, by our assumption that $\overline{{\rm Ric}}\ge 0$ and $h^{\p B}\ge 0$, we deduce
that $g=0\hbox{ or }1$ and $r=1, 2 \hbox{ or }3$, or $g=2$ and $r=1$. Moreover, $g=2$ and $r=1$ happens only when $h^{\p B}\equiv 0$ along $\p M$ and $\overline{{\rm R}}-\overline{{\rm Ric}}(\nu,\nu)\equiv 0$ along $M$.

\noindent{\bf Proof of Theorem \ref{thm0.4}}

 Let us consider the general case, $\th\in (0, \frac{\pi}{2})\cup (\frac{\pi}{2}, \pi)$. Assume $\p M$ is embedded. Then $\p M$ divide $\p B$ by several components.
 We choose one of these components, denoted by $T$, so that $M$ intersects $T$ with $\th\in (\frac{\pi}{2},\pi)$.
We know
\begin{eqnarray*}
\kappa_g=\<\bar\n_e \mu, e\>=\<\bar\n_e (\cos\th\, \bar\nu+\sin\th\,\bar N), e\>=\cos\th\, \bar \kappa_g+\sin\th \, h^{\p B}(e,e).
\end{eqnarray*}
Then
\begin{eqnarray*}
\int_{\p M}\kappa_g ds= \cos\th\, \int_{\p M}\bar \kappa_g ds+\sin\th \, \int_{\p M}h^{\p B}(e,e)ds.
\end{eqnarray*}
By the Gauss-Bonnet formula for $T$, we have
\begin{eqnarray*}
\int_T KdA+\int_{\p T=\p M}\bar \kappa_g ds=2\pi\chi(T).
\end{eqnarray*}
Hence
\begin{eqnarray*}
\int_{\p M}\kappa_g ds=2\pi \cos\th\, \chi(T)- \cos\th \,\int_T KdA+\sin\th \, \int_{\p M}h^{\p B}(e,e)ds.
\end{eqnarray*}
Note that $\chi(T)=r \hbox{ or }2-r$ depending on the choice of $T$. By the Gauss equation, if $\overline{{\rm Sect}}\ge 0$ and $h^{\p B}\ge 0$, we know the Gauss curvature $K$ of $T$ is nonnegative and in turn,
\begin{eqnarray*}
\int_{\p M}\kappa_g ds\ge  -2\pi r.
\end{eqnarray*}
Using \eqref{geq5}, we conclude that $g+\frac{r}{2}<4, \hbox{ if } g \hbox{ even }$ and $g+\frac{r}{2}<5, \hbox{ if } g \hbox{ odd}$.

\
\section{Morse index estimate}\label{sec5}
In this section, we will discuss lower bound estimates for Morse Index of stationary type-II hypersurface in a connected domain $B$ in $\bar M^{n+1}$.

For any $\varphi\in C^{\infty}(M)$, we set
\begin{eqnarray*}
Q(\vp, \vp)=\int_M -\vp(\De \vp+(|h|^2+\overline{{\rm Ric}}(\nu,\nu))\vp) dA+\int_{\p M} \vp(\n_\mu \vp-q \vp)\, ds.
\end{eqnarray*}
Here $q$ is as in \eqref{q}.
\begin{defi} Let $x: M^n\to B\subset \bar M^{n+1}$ be a type-II stationary immersed hypersurface.
The Morse index ${\rm Ind}(M)$ of $M$ is defined to be the maximal dimension of a subspace $\mathcal{S}$ of $C^\infty(M)$ such that $$\int_{\partial M} \vp\, dA=0, \hbox{ and }Q(\vp, \vp)<0,\hbox{ for }\vp\in  \mathcal{S}.$$

\end{defi}

For the Jacobi operator $J=\De+|h|^2+\overline{{\rm Ric}}(\nu,\nu)$, with the boundary condition $$\n_\mu \vp-q \vp=0,$$
there exists a non-decreasing and diverging sequence $\l_1\le \l_2\le \cdots\le \l_k\uparrow\infty$ of eigenvalues associated to a $L^2(M)$-orthonormal basis $\{\vp_k\}_{k=1}^\infty$ of solutions to the eigenvalue problem
\begin{equation}\label{22}
\begin{cases}{}
&J\varphi=-\l \vp,\hbox{ in }M\\
&\n_\mu \vp-q\vp=0, \hbox{ on }\partial M.
\end{cases}
\end{equation}
By Rayleigh's theorem, the eigenvalues $\l_{k}$ has the following variational characterization
\begin{eqnarray*}
\l_{k}=\inf_{\vp\in V_{k-1}^\perp\setminus\{0\}}\frac{Q(\vp, \vp)}{\int_{ M}\vp^2\, dA},
\end{eqnarray*}
where $V_{k-1}=\{\vp_1,\cdots, \vp_{k-1}\}$ and $V_{k-1}^\perp$ is the orthogonal complement of $V_{k-1}$ with respect to the inner product of $L^2( M)$.

The Morse index of a type-II stationary hypersurface is closely related to the number of negative eigenvalues of \eqref{22}. For type-I case, a similar statement can be found in the literature, see e.g. \cite{BB, rossman}.
\begin{prop}\label{morse-index}
Let $k$ be the number of negative eigenvalues of \eqref{22}. Then
$$ k-1\le{\rm Ind}(M)\le k.$$
\end{prop}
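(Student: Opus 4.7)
The plan is to exploit the variational characterization of the eigenvalues of the Jacobi operator together with the fact that the wetting--area--preserving condition $\int_{\partial M}\varphi\,ds=0$ is a single linear constraint, so it can reduce the index of the quadratic form $Q$ by at most one.

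First I would set $V_k=\mathrm{span}\{\varphi_1,\ldots,\varphi_k\}$, where $\varphi_1,\ldots,\varphi_k$ are the $L^2(M)$--orthonormal eigenfunctions associated with the negative eigenvalues $\lambda_1\leq\cdots\leq\lambda_k<0$ of the problem \eqref{22}. A short integration by parts, using both the eigenvalue equation and the Robin-type boundary condition $\nabla_\mu\varphi_j-q\varphi_j=0$, gives
\begin{equation*}
Q(\varphi_i,\varphi_j)=-\int_M\varphi_i J\varphi_j\,dA+\int_{\partial M}\varphi_i(\nabla_\mu\varphi_j-q\varphi_j)\,ds=\lambda_j\,\delta_{ij}.
\end{equation*}
Consequently, for any nonzero $\varphi=\sum a_i\varphi_i\in V_k$ we have $Q(\varphi,\varphi)=\sum a_i^2\lambda_i<0$, so $Q$ is negative definite on $V_k$. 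Symmetrically, the same computation, together with the Rayleigh characterization, shows that $Q(\varphi,\varphi)\geq 0$ on the $L^2$--orthogonal complement of $V_k$ inside the natural form domain, since on this complement every Rayleigh quotient is bounded below by $\lambda_{k+1}\geq 0$.

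For the upper bound $\mathrm{Ind}(M)\leq k$, suppose $\mathcal S\subset C^\infty(M)$ is an admissible test subspace, i.e.\ $Q(\varphi,\varphi)<0$ and $\int_{\partial M}\varphi\,ds=0$ for every nonzero $\varphi\in\mathcal S$. Then $\mathcal S\cap V_k^\perp=\{0\}$, because $Q$ is non-negative on $V_k^\perp$. Hence the $L^2$--orthogonal projection $\pi:\mathcal S\to V_k$ is injective, and therefore $\dim\mathcal S\leq k$.

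For the lower bound $\mathrm{Ind}(M)\geq k-1$, consider the linear functional $L:V_k\to\mathbb R$ defined by $L(\varphi)=\int_{\partial M}\varphi\,ds$. Its kernel $\mathcal S_0:=\ker L\subset V_k$ has dimension at least $k-1$. Since every element of $\mathcal S_0$ automatically satisfies the wetting--area--preserving condition, and $Q$ is negative definite on $V_k\supset\mathcal S_0$, the subspace $\mathcal S_0$ is admissible in the definition of the Morse index, giving $\mathrm{Ind}(M)\geq\dim\mathcal S_0\geq k-1$.

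The only mildly delicate point is the functional--analytic setup that guarantees the spectral decomposition used in the upper bound, namely that $Q$ has a well-defined negative subspace of dimension exactly $k$ and is non-negative on its $L^2$--orthogonal complement within the form domain. This follows from the standard spectral theory of self-adjoint elliptic operators with Robin-type boundary conditions applied to $J$ with boundary operator $\nabla_\mu-q$, together with a routine density argument allowing one to test with smooth functions, so no new idea beyond the classical Courant--Fischer argument is required.
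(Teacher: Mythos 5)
Your proof is correct and follows essentially the same route as the paper: negative definiteness of $Q$ on the span $\mathcal V_k$ of the first $k$ eigenfunctions gives the lower bound via the kernel of the boundary functional $L(\varphi)=\int_{\partial M}\varphi\,ds$, and the Rayleigh characterization gives the upper bound. Your projection argument for $\dim\mathcal S\le k$ is in fact a cleaner justification than the paper's terse claim that $\mathcal S\subset\mathcal V_k$, but it is the same underlying idea, so nothing more needs to be said.
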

\begin{proof}
Let $\mathcal{V}_k=\{\vp_i\}_{i=1}^k$ be the subspace consisting of the first $k$ eigenfunctions for \eqref{22}.
Since $$Q(\vp_i, \vp_i)=\l_i\int_M \vp_i^2\, dA<0,\hbox{ for }i=1,\cdots, k,$$
we have $\mathcal{S}\subset \mathcal{V}_k$ and ${\rm Ind}(M)\le k.$

On the other hand, consider the linear operator $L: \mathcal{V}_k\to \rr$ defined by
$$L\vp=\int_{\p M} \vp\, dA.$$
It follows that $${\rm Ind}(M)=\dim{\rm Ker}(L)=\dim (\mathcal{V}_k)-\dim {\rm Im}(L)\ge k-1.$$
\end{proof}

Thanks to above proposition, to estimate the Morse index of $M$, one only needs to estimate the number of negative eigenvalues of \eqref{22}.
Next we use the method of Savo \cite{Sa}, Ambrozio-Carlotto-Sharp \cite{ACS1, ACS2} to find an estimate of number of negative eigenvalues of \eqref{22} in terms of topological invariant.

First, let $\bar M$ be isometrically embedded in $\mathbb{R}^d$. Let $\{E_i\}_{i=1}^d$ be a canonical basis of $\rr^d$. We use $\<\cdot ,\cdot\>$ to denote the inner product of $\rr^d$ and $D$ to denote the covariant derivative of $\rr^d$. We use ${\rm II}$ to denote the second fundamental form for the embedding $\bar M\subset \rr^d.$
Given a vector field $\xi$ on $M$, we denote $\xi^{b}$ by its dual 1-form.

\smallskip

\subsection{First method: coordinates of $\nu\wedge\xi$}\

Define \begin{eqnarray}\label{uij}
u_{ij}=\<\xi, E_i\>\<\nu, E_j\>-\<\xi, E_j\>\<\nu, E_i\>, \quad 1\leq i<j\leq d
\end{eqnarray}
where $\nu$ is the outward unit normal vector field of $x: M^{n}\to B\subseteq\bar M^{n+1}$. Then we have the following key formula.
\begin{prop}Let $$\xi^{b}\in \mathcal{H}^1_N(M, \p M):=\{\xi^{b}\in\Omega^{1}(M); d\xi^{b}=\delta\xi^{b}=0\,\,\text{in}\,M \,\,\text{and}\,\,\mu\wedge\xi^{b}=0 \,\,\text{on}\,\, \partial M \}.$$Then
\begin{eqnarray}\label{QQ}
&&\sum_{1\le i<j\le d} Q(u_{ij}, u_{ij})\label{sum-uij}\\
&\,\,=& \int_{M} -\sum_{k=1}^n\overline{{\rm R}}(e_k,\xi,e_k,\xi)  -\overline{{\rm Ric}}(\nu,\nu)|\xi|^2+ \sum_{k=1}^n|{\rm II}(e_k, \xi)|^2+\sum_{k=1}^n |{\rm II}(e_k, \nu)|^2|\xi|^2  \,dA\nonumber\\
&&-\int_{\p M} \frac{1}{\sin \th}\, H^{\p B}|\xi|^{2} ds\nonumber,
\end{eqnarray}
where $\{e_k\}_{k=1}^n$ is an orthonormal basis of $M$.
\end{prop}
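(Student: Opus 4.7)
My plan is to identify the $u_{ij}$ as the coordinates of the bivector $A:=\xi\wedge\nu\in\Lambda^2\mathbb{R}^d$ in the parallel basis $\{E_i\wedge E_j\}_{i<j}$. Since $\xi\in TM$ and $\nu$ is the unit normal, $\xi\perp\nu$, so the identity $|a\wedge b|^2=|a|^2|b|^2-\langle a,b\rangle^2$ gives $\sum_{i<j}u_{ij}^2=|\xi|^2$ pointwise. For the gradient, I will apply Leibniz, $D_{e_k}A=(D_{e_k}\xi)\wedge\nu+\xi\wedge(D_{e_k}\nu)$, where $D$ is the Euclidean connection, and decompose each factor via the nested embeddings $TM\subset T\bar M\subset\mathbb{R}^d$ as
\[D_{e_k}\xi=\nabla^M_{e_k}\xi+h(e_k,\xi)\nu+\mathrm{II}(e_k,\xi),\qquad D_{e_k}\nu=\bar\nabla_{e_k}\nu+\mathrm{II}(e_k,\nu),\]
with $\bar\nabla_{e_k}\nu\in TM$ and $|\bar\nabla_{e_k}\nu|^2=\sum_j h(e_k,e_j)^2$. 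The cross term in $|D_{e_k}A|^2$ vanishes because $\langle\nu,D_{e_k}\nu\rangle=0$ and $\langle\xi,\nu\rangle=0$, while $\langle D_{e_k}\xi,\nu\rangle^2=\langle\xi,D_{e_k}\nu\rangle^2=h(e_k,\xi)^2$. Summing over $k$ produces
\[\sum_{i<j}|\nabla^M u_{ij}|^2=|\nabla^M\xi|^2-\sum_k h(e_k,\xi)^2+\sum_k|\mathrm{II}(e_k,\xi)|^2+|\xi|^2|h|^2+|\xi|^2\sum_k|\mathrm{II}(e_k,\nu)|^2.\]

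Substituting into $\sum_{i<j}Q(u_{ij},u_{ij})$, the $|\xi|^2|h|^2$ piece cancels against $-|h|^2\sum u_{ij}^2$. The remaining integrand already produces the $\mathrm{II}$ and $\overline{\mathrm{Ric}}(\nu,\nu)$ terms in the target; only the pair $|\nabla^M\xi|^2-\sum_k h(e_k,\xi)^2$ needs to be converted to $-\sum_k\overline{R}(e_k,\xi,e_k,\xi)$. The Gauss equation with $H=0$ gives $\mathrm{Ric}_M(\xi,\xi)=\sum_k\overline{R}(e_k,\xi,e_k,\xi)-\sum_k h(e_k,\xi)^2$, so $-\sum_k h(e_k,\xi)^2=\mathrm{Ric}_M(\xi,\xi)-\sum_k\overline{R}(e_k,\xi,e_k,\xi)$. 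Integrating the Bochner identity $\tfrac12\Delta|\xi|^2=|\nabla^M\xi|^2+\mathrm{Ric}_M(\xi,\xi)$ for the harmonic form $\xi^b$ yields $\int_M(|\nabla^M\xi|^2+\mathrm{Ric}_M(\xi,\xi))\,dA=\int_{\partial M}\tfrac12\partial_\mu|\xi|^2\,ds$. After these two substitutions the volume part agrees with the target, and what remains is the boundary expression $\int_{\partial M}[\tfrac12\partial_\mu|\xi|^2-q|\xi|^2]\,ds$, which I must show equals $-\int_{\partial M}\tfrac{1}{\sin\theta}H^{\partial B}|\xi|^2\,ds$.

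To finish, the condition $\mu\wedge\xi^b=0$ on $\partial M$ forces $\xi=c\mu$ with $c=\xi^b(\mu)$. Expanding $\delta\xi^b=0$ in the frame $\{\mu,e_\alpha\}$ and using $\xi(e_\alpha)=0$ gives $(\nabla_\mu\xi^b)(\mu)=-cH_\partial$, where $H_\partial$ is the mean curvature of $\partial M$ in $M$ with respect to $\mu$, hence $\tfrac12\partial_\mu|\xi|^2=-|\xi|^2H_\partial$. Using the capillary decomposition $\mu=\sin\theta\,\bar N+\cos\theta\,\bar\nu$, $\nu=-\cos\theta\,\bar N+\sin\theta\,\bar\nu$, and the Weingarten formulas for $\bar N\subset\bar M$ and $\bar\nu\subset\partial B$, I derive $H_\partial=\sin\theta\,\mathrm{tr}_{T\partial M}h^{\partial B}+\cos\theta\,\bar H$ and, from $H=0$, $h(\mu,\mu)=\cos\theta\,\mathrm{tr}_{T\partial M}h^{\partial B}-\sin\theta\,\bar H$. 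Plugging these into $q=\tfrac{1}{\sin\theta}h^{\partial B}(\bar\nu,\bar\nu)+\cot\theta\,h(\mu,\mu)$ and invoking $H^{\partial B}=h^{\partial B}(\bar\nu,\bar\nu)+\mathrm{tr}_{T\partial M}h^{\partial B}$, the trigonometric algebra collapses to the key identity $q-\tfrac{1}{\sin\theta}H^{\partial B}=-H_\partial$. The two $H_\partial$ contributions then cancel and only $-\tfrac{1}{\sin\theta}H^{\partial B}|\xi|^2$ survives, matching the target boundary term.

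The main obstacle is keeping signs consistent throughout: the Gauss equation for the minimal hypersurface with the paper's convention $h(X,Y)=\bar g(\bar\nabla_X\nu,Y)$ produces $\mathrm{Ric}_M(\xi,\xi)=\sum_k\overline{R}(e_k,\xi,e_k,\xi)-\sum_k h(e_k,\xi)^2$, a sign that is easy to invert; and deriving the boundary identity $q-\tfrac{1}{\sin\theta}H^{\partial B}=-H_\partial$ requires careful bookkeeping of the three distinct second fundamental forms $h$, $h^{\partial B}$, $\bar h$ that meet along $\partial M$ at the capillary angle. A subsidiary check is that the Bochner boundary value $\tfrac12\partial_\mu|\xi|^2=-|\xi|^2H_\partial$ is obtained correctly for the relative boundary condition, which relies on the divergence-free property of $\xi^b$ applied in the boundary-adapted frame.
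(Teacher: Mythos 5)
Your proposal is correct and follows essentially the same route as the paper's proof: coordinates of the bivector $\xi\wedge\nu$, the pointwise computation of $\sum_{i<j}|\nabla u_{ij}|^2$ via the nested embeddings, the Gauss equation with $H=0$, the integrated Bochner/Weitzenb\"ock identity for the harmonic one-form, and the boundary reduction using $\xi=c\mu$, $\delta\xi^{b}=0$ and the contact-angle decomposition (your identity $q+H_{\partial}=\frac{1}{\sin\theta}H^{\partial B}$ is exactly the paper's trace computation of $H^{\partial M}$ from $\mu=\frac{1}{\sin\theta}\bar N+\cot\theta\,\nu$, merely organized through the auxiliary mean curvature of $\partial M$ in $\partial B$). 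The only quibble is the sign in the decomposition $D_{e_k}\xi=\nabla_{e_k}\xi-h(e_k,\xi)\nu+{\rm II}(e_k,\xi)$ under the convention $h(X,Y)=\bar g(\bar\nabla_X\nu,Y)$; this is immaterial since only $h(e_k,\xi)^2$ enters the computation.
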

\begin{proof} The proof is close to Savo \cite{Sa}, Ambrozio-Carlotto-Sharp \cite{ACS2}, small difference arises from the boundary computation. We prove it here for reader's convenience.
A direct computation gives
\begin{eqnarray*}
\sum_{1\le i<j\le d}|\n_{e_k} u_{ij}|^2=|D_{e_k} \xi|^2+|\xi|^2|D_{e_k}\nu|^2- 2\<D_{e_k} \nu, \xi\>^2.
\end{eqnarray*}
Note that \begin{eqnarray*}
&&D_{e_k} \xi= \n_{e_k} \xi- h(e_k, \xi)\nu+ {\rm II}(e_k, \xi),\\
&&D_{e_k}\nu=\sum_{l=1}^nh(e_k, e_l)e_l+{\rm II}(e_k, \nu).
\end{eqnarray*}
It follows that
\begin{eqnarray*}
\sum_{1\le i<j\le d}|\n_{e_k} u_{ij}|^2
&=&|\n_{e_k} \xi|^2+|h(e_k, \xi)|^2+ |{\rm II}(e_k, \xi)|^2\\&&+\sum_{l=1}^n|h(e_k, e_l)|^2|\xi|^2+ |{\rm II}(e_k, \nu)|^2|\xi|^2- 2|h(e_k, \xi)|^2,
\end{eqnarray*}
and in turn
\begin{eqnarray*}
&&\sum_{1\le i<j\le d}|\n u_{ij}|^2= \sum_{k=1}^n\sum_{1\le i<j\le d}|\n_{e_k} u_{ij}|^2
\\&=&|\n \xi|^2-\sum_{k=1}^n|h(e_k, \xi)|^2+|h|^2|\xi|^2  + \sum_{k=1}^n|{\rm II}(e_k, \xi)|^2+\sum_{k=1}^n |{\rm II}(e_k, \nu)|^2|\xi|^2.
\end{eqnarray*}
Using the Gauss equation for the immersion $x:M\to \bar M$, taking account $H=0$, we have
$$-\sum_{k=1}^n|h(e_k, \xi)|^2 = {\rm Ric}(\xi, \xi)-\sum_{k=1}^n\overline{{\rm R}}(e_k,\xi,e_k,\xi).$$
Note that $\sum_{1\le i<j\le d} |u_{ij}|^2= |\xi|^2$.
It follows that
\begin{eqnarray*}
&&\sum_{1\le i<j\le d} \int_M |\n u_{ij}|^2- (|h|^2+\overline{{\rm Ric}}(\nu,\nu))|u_{ij}|^2dA
\\&=&\int_{M} |\n \xi|^2+  {\rm Ric}(\xi, \xi)-\sum_{k=1}^n\overline{{\rm R}}(e_k,\xi,e_k,\xi) -\overline{{\rm Ric}}(\nu,\nu)|\xi|^2\\&&+ \sum_{k=1}^n|{\rm II}(e_k, \xi)|^2+\sum_{k=1}^n |{\rm II}(e_k, \nu)|^2|\xi|^2 \,dA.\end{eqnarray*}
By the Weintzenb\"ock formula, using that $d\xi^{b}=\delta \xi^{b}=0$, we have
\begin{eqnarray*}
\int_{M} |\n \xi|^2+ {\rm Ric}(\xi, \xi)\, dA
&=&\int_{\p M}g(\n_\mu \xi, \xi)+ \int_{M} -g(\n^*\n \xi, \xi)+  {\rm Ric}(\xi, \xi)\, dA
\\&=&\int_{\p M}g(\n_\mu \xi, \xi)\,ds+ \int_{M} (d\d+\d d)\xi^{b} \, dA
\\&=&\int_{\p M}g(\n_\mu \xi, \xi)\,ds.
\end{eqnarray*}
On the other hand, by choosing $\{T_{\alpha}\}_{\alpha=1}^{n-1}$ is orthonormal basis of $\partial M$, we get
$$0=\div_M \xi=\sum_{\alpha=1}^{n-1}g(\n_{T_\a} \xi, T_\a)+g(\n_{\mu} \xi, \mu).$$
Since $\xi\wedge \mu=0$ along $\p M$, then $\xi=\l \mu$ along $\p M$ for some smooth function $\l$ on $M$.
Thus\begin{eqnarray*}
&&\int_{\p M}g(\n_\mu \xi, \xi)\,ds=- \sum_{\alpha=1}^{n-1}\int_{\p M} \l g(\n_{T_\a} \xi, T_\a)\,ds=- \sum_{\alpha=1}^{n-1}\int_{\p M} \l^2 g(\n_{T_\a} \mu, T_\a)\,ds= - \int_{\p M} H^{\p M} |\xi|^2 \,ds.
\end{eqnarray*}
Combining the above computation, we get
\begin{eqnarray*}
\sum_{1\le i<j\le d} Q(u_{ij}, u_{ij})&=&\sum_{1\le i<j\le d}\int_M |\n u_{ij}|^2- (|h|^2+\overline{{\rm Ric}}(\nu,\nu))|u_{ij}|^2) dA\\&&-\sum_{1\le i<j\le d} \int_{\p M} \left(\frac{1}{\sin \th}\,h^{\p B}(\bar \nu, \bar \nu)+\cot \th\, h(\mu, \mu)\right)|u_{ij}|^2 ds
\\&=& \int_{M} -\sum_{k=1}^n\overline{{\rm R}}(e_k,\xi,e_k,\xi) -\overline{{\rm Ric}}(\nu,\nu)|\xi|^2+ \sum_{k=1}^n|{\rm II}(e_k, \xi)|^2+\sum_{k=1}^n |{\rm II}(e_k, \nu)|^2|\xi|^2  \,dA
\\&&-\int_{\p M}  \left(H^{\p M}+\frac{1}{\sin \th}\,h^{\p B}(\bar \nu, \bar \nu)+\cot \th\, h(\mu, \mu)\right)|\xi|^2\, ds.
\end{eqnarray*}
Recall that $$\mu=\frac{1}{\sin \th}\, \bar N+\cot \th\, \nu\quad {\rm along}\,\, \partial M$$Thus
\begin{eqnarray*}
H^{\p M}&=&\sum_{\a=1}^{n-1}  g(\n_{T_\a} \mu, T_\a)=\sum_{\a=1}^{n-1}  \bar g\left(\bar\n_{T_\a}\left(\frac{1}{\sin \th}\, \bar N+\cot \th\, \nu\right), T_\a\right)
\\&=&\frac{1}{\sin \th}\sum_{\alpha=1}^{n-1}\, h^{\p B}(T_\a, T_\a)+\cot \th\, \sum_{\alpha=1}^{n-1}h(T_\a, T_\a).
\end{eqnarray*}
It follows that
\begin{alignat}{2}
&-\int_{\p M}\left(H^{\p M}+\frac{1}{\sin \th}\,h^{\p B}(\bar \nu, \bar \nu)+\cot \th\, h(\mu, \mu)\right)|\xi|^2\, ds\nonumber\\
&= -\int_{\p M} \left(\frac{1}{\sin \th}\, H^{\p B}+\cot \th\, H\right)|\xi|^{2} ds\nonumber\\
&= -\int_{\p M} \frac{1}{\sin \th}\, H^{\p B}|\xi|^{2} ds\nonumber
\end{alignat}
where in the last equality we used $H=0$.
We get the assertion \eqref{QQ}.
 \end{proof}

\noindent{\textbf{Proof of Theorem \ref{thm-6}}.}\
Assume the number of  negative eigenvalues of \eqref{22} is $k$. 
Define
\begin{eqnarray*}
\Psi: \mathcal{H}^1_N(M, \p M)&&\to \rr^{\frac{d(d-1)}{2}k}
\\ \xi^{b}&&\mapsto  \int_{ M} u_{ij}\vp_q\, dA
\end{eqnarray*}
where $u_{ij}$ are functions defined via $\xi$ by \eqref{uij} and $q$ ranges from $1$ to $k$ and $i<j$ ranges from $1$ to $d$.\\
First, we claim that ${\rm Ker}(\Psi)=\{0\}.$\\
For any non-zero 1-form $\xi^{b}\in{\rm Ker}{\Psi}$, this means that $u_{ij}\in V_{k}^\perp$, thus
\begin{equation*}
  Q(u_{ij},u_{ij})\geq\lambda_{k+1}\int_{ M}u_{ij}^{2}dA\geq0\qquad \text{for\,\,all}\,\,1\leq i<j\leq d.
\end{equation*}
by the variational characterization of the eigenvalue problem (\ref{22}). In particular,
by (\ref{sum-uij}) and hypothesis (\ref{condi-6-5}), we have
\begin{equation*}
  0\leq \sum_{1\le i<j\le d}Q(u_{ij},u_{ij})<0.
\end{equation*}
Hence $\Psi$ has trivial kernel. \\
Next, we observe that $\Psi$ is a linear operator and
 $$\mathcal{H}^1_N(M, \p M)/{\rm Ker}(\Psi)\cong {\rm Im}(\Psi)\subset \rr^{\frac{d(d-1)}{2}k}.$$
Thus $${\rm dim}(\mathcal{H}^1_N(M, \p M))\le {\rm dim}({\rm Ker}(\Psi))+\frac{d(d-1)}{2}k=\frac{d(d-1)}{2}k.$$
By \cite{ACS2} Theorem 3, we have the following isomorphisms
\begin{equation*}
\mathcal{H}_{N}^{1}(M,\partial M)\simeq H_{1}(M,\partial M;\mathbb{R}).
\end{equation*}
Recall from Proposition \ref{morse-index} that ${\rm Ind}(M)\ge k-1$.
The assertion follows.
\qed

\begin{remark}
The dimension of this homology group can be explicitly computed in terms of the homology groups of $M^{n}$ and $\partial M$(see \cite{ACS2}, Lemma 4).
\end{remark}

\smallskip

\subsection{Second method: coordinates of $\xi$}\

Now we consider ${\rm dim} M=2$ case. Assume $M^{2}$ is type-II stationary surface in $B\subseteq \bar{M}^{3}$. Given a vector field $\xi$ on $M^{2}$, we denote $\xi^{b}$ by its dual 1-form.\\
Define \begin{eqnarray}\label{ui}
u_{i}=\<\xi, E_i\>, \quad 1\leq i\leq d
\end{eqnarray}

\begin{prop}Let
\begin{equation}\label{1-form}
  \xi^{b}\in H^{1}_{T}(M,\partial M):=\{\xi^{b}\in\Omega^{1}(M):\Delta\xi^{b}=0,\,i_{\mu}\xi=0\, {\rm{and}}\,\, i_{\mu}d\xi^{b}=0\}.
\end{equation}
Then
\begin{equation}\label{1-form-sum}
\sum_{i=1}^{d} Q(u_{i}, u_{i})=\int_{M}-\frac{1}{2}\overline{\rm{R}}|\xi|^{2}+\sum_{k=1}^{2}|{\rm II}(e_{k},\xi)|^{2}dA-\int_{\partial M}\frac{1}{\sin\theta}H^{\partial B}|\xi|^{2}ds
\end{equation}
where $\{e_1, e_{2}\}$ is an orthonormal basis of $M^{2}$.
\end{prop}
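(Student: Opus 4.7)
The plan parallels the first method but with the simpler test functions $u_i = \langle \xi, E_i \rangle$, exploiting the special features of dimension two (where $H=0$ forces $\mathrm{tr}(h)=0$ and hence strong structural constraints on $h$). Integration by parts in each $Q(u_i, u_i)$ and summing (using $\sum_i u_i^2 = |\xi|^2$) reduces the identity to
\[
\sum_{i=1}^d Q(u_i, u_i) = \int_M \sum_i |\nabla u_i|^2 \, dA - \int_M (|h|^2 + \overline{\rm Ric}(\nu,\nu))|\xi|^2 \, dA - \int_{\partial M} q|\xi|^2 \, ds,
\]
so the task splits into computing $\sum_i |\nabla u_i|^2$ intrinsically and collapsing the curvature/boundary bookkeeping.

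Since each $u_i$ is a scalar function, $\nabla_{e_k} u_i = \langle D_{e_k} \xi, E_i \rangle$, and the orthogonal decomposition $D_{e_k} \xi = \nabla_{e_k} \xi - h(e_k, \xi)\nu + \mathrm{II}(e_k, \xi)$ into $TM$, $\mathbb{R}\nu$, and $(T\bar M)^\perp$ parts gives
\[
\sum_i |\nabla u_i|^2 = |\nabla \xi|^2 + \sum_{k=1}^2 h(e_k, \xi)^2 + \sum_{k=1}^2 |\mathrm{II}(e_k, \xi)|^2.
\]
In dimension two a traceless symmetric $h$ satisfies $h^2 = \tfrac12 |h|^2 \, I$, so $\sum_k h(e_k, \xi)^2 = \tfrac12 |h|^2 |\xi|^2$. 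Next I apply the Bochner--Weitzenbock formula to the harmonic 1-form $\xi^b$, pair with $\xi$, and integrate by parts to obtain
\[
\int_M |\nabla \xi|^2 \, dA = -\int_M \mathrm{Ric}(\xi, \xi) \, dA + \int_{\partial M} \langle \nabla_\mu \xi, \xi \rangle \, ds.
\]
The boundary conditions $i_\mu \xi = 0$ (so $\xi = \lambda T$ with $T$ a unit tangent to $\partial M$) and $i_\mu d\xi^b = 0$ combine with tangential differentiation of $\langle \xi, \mu \rangle = 0$ to yield $\langle \nabla_\mu \xi, T \rangle = \langle \nabla_T \xi, \mu \rangle = -\lambda \kappa_g$, hence $\langle \nabla_\mu \xi, \xi \rangle = -\kappa_g |\xi|^2$, where $\kappa_g$ is the geodesic curvature of $\partial M$ in $M$.

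The interior side now collapses via the Gauss equation: in dimension two with $H=0$, $K = \overline{\rm Sect}(T_pM) - \tfrac12 |h|^2$, and since $\dim \bar M = 3$, $\overline{\rm R} = 2\overline{\rm Sect}(T_pM) + 2\overline{\rm Ric}(\nu, \nu)$, so $\mathrm{Ric}(\xi, \xi) = K|\xi|^2 = \bigl(\tfrac12 \overline{\rm R} - \overline{\rm Ric}(\nu, \nu) - \tfrac12 |h|^2\bigr)|\xi|^2$. When I assemble everything, the $|h|^2|\xi|^2$ and $\overline{\rm Ric}(\nu,\nu)|\xi|^2$ contributions cancel cleanly, leaving exactly $-\tfrac12 \overline{\rm R}|\xi|^2 + \sum_k |\mathrm{II}(e_k, \xi)|^2$. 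For the boundary side, I substitute $\mu = \sin\theta\, \bar N + \cos\theta\, \bar\nu$ and $\nu = -\cos\theta\, \bar N + \sin\theta\, \bar\nu$ into $\kappa_g = \langle \bar\nabla_T \mu, T\rangle$, $h(\mu,\mu) = -h(T,T) = \langle \bar\nabla_T \nu, T\rangle$, and the definition of $q$, then eliminate the auxiliary geodesic curvature $\langle \bar\nabla_T \bar\nu, T\rangle$ to obtain the crucial identity
\[
\kappa_g + q = \frac{1}{\sin \theta}\bigl(h^{\partial B}(T,T) + h^{\partial B}(\bar\nu, \bar\nu)\bigr) = \frac{1}{\sin \theta} H^{\partial B}.
\]
Combining all pieces gives the claimed formula.

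The main obstacle is simply careful bookkeeping of the cancellations: verifying that $h^2 = \tfrac12|h|^2 I$ together with the Gauss equation in ambient dimension three collapses the interior curvature and $|h|^2$ terms to exactly the scalar-curvature coefficient $-\tfrac12 \overline{\rm R}$ with no residue, and that the angle algebra along $\partial M$ reduces $\kappa_g + q$ to the single clean quantity $H^{\partial B}/\sin\theta$. Both are routine once written out, but they are what make the final expression as simple and topologically interpretable as it is.
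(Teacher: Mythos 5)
Your argument is correct and follows essentially the same route as the paper: the decomposition $D_{e_k}\xi=\nabla_{e_k}\xi-h(e_k,\xi)\nu+{\rm II}(e_k,\xi)$, the identity $\sum_k h(e_k,\xi)^2=\tfrac12|h|^2|\xi|^2$ from minimality in dimension two, the Weitzenb\"ock formula with the boundary conditions $i_\mu\xi=0$, $i_\mu d\xi^b=0$, and the Gauss equation to collapse the interior terms. The only cosmetic difference is that you package the boundary term as $-\kappa_g|\xi|^2$ and then verify $\kappa_g+q=\frac{1}{\sin\theta}H^{\partial B}$, whereas the paper writes it directly as $-\frac{1}{\sin\theta}h^{\partial B}(\xi,\xi)-\cot\theta\,h(\xi,\xi)$ via $\mu=\frac{1}{\sin\theta}\bar N+\cot\theta\,\nu$ and then adds $q|\xi|^2$; these are the same computation.
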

\begin{proof}
We can directly compute that
\begin{equation*}
   \sum_{i=1}^{d}|\nabla u_{i}|^{2}=|D\xi|^{2}=|\nabla\xi|^{2}+\sum_{k=1}^{2}h^{2}(e_{k},\xi)+\sum_{k=1}^{2}|{\rm II}(e_{k},\xi)|^{2}
\end{equation*}
Note that
\begin{equation*}
  D_{e_{k}}\xi=\nabla_{e_{k}}\xi-h(e_{k},\xi)\nu+{\rm II}(e_{k},\xi)
\end{equation*}
Hence,
\begin{equation}\label{1-form-one}
\int_{M}\sum_{i=1}^{d}|\nabla u_{i}|^{2}\,dA=\int_{M}|\nabla\xi|^{2}+\sum_{k=1}^{2}h^{2}(e_{k},\xi)+\sum_{k=1}^{2}|{\rm II}(e_{k},\xi)|^{2}\,dA
\end{equation}
Next choosing $\{e_{1},e_{2}\}$ such that the second fundamental form $h(e_{i},e_{j})=\lambda_{i}\delta_{ij}\nu$ for $i,j=1,2$, we can check that
\begin{equation}\label{hei}
  \sum_{k=1}^{2}h^{2}(e_{k},\xi)=\lambda_{1}^{2}\langle\xi,e_{1}\rangle^{2}+\lambda_{2}^{2}\langle\xi,e_{2}\rangle^{2}=\frac{1}{2}|h|^{2}|\xi|^{2}
\end{equation}
Since $M^{2}$ is minimal surface in $B$.\\
On the other hand, applying (\ref{1-form}) and the Weintzenb\"ock formula
\begin{equation*}
  0=-\int_{M}g(\Delta\xi,\xi) dA=\int_{M}g(\nabla^{*}\nabla\xi,\xi)-Ric^{M}(\xi,\xi)\,dA
\end{equation*}
By integrating by parts and ${\rm dim} M=2$, we have
\begin{equation}\label{dw2}
  \int_{M}|\nabla\xi|^{2}+K|\xi|^{2}dA=\int_{\partial M}g(\nabla_{\mu}\xi,\xi) ds
\end{equation}
Since $i_{\mu}d\xi^{b}=0$, we know
\begin{equation}\label{dw}
  0=d\xi^{b}(\mu,\xi)=g(\nabla_{\mu}\xi,\xi)-g(\nabla_{\xi}\xi,\mu).
\end{equation}
Therefore, by (\ref{dw2}) and (\ref{dw})

\begin{eqnarray}\label{dw3}
\int_{M}|\nabla\xi|^{2}+K|\xi|^{2}dA&=&\int_{\partial M}g(\nabla_{\mu}\xi,\xi) ds=-\int_{\partial M}g(\xi,\nabla_{\xi}\mu)ds\\
&=&-\int_{\partial M}\bar{g}(\xi,\bar{\nabla}_{\xi}(\frac{1}{\sin\theta}\,\bar{N}+\cot\theta\,\nu)) ds\nonumber\\
&=&-\int_{\partial M}\frac{1}{\sin\theta}\,h^{\partial B}(\xi,\xi)+\cot\theta \, h(\xi,\xi)ds\nonumber
\end{eqnarray}
By Gauss equation and $H=0$, we have
\begin{equation}\label{Gauss}
  K=\frac{1}{2}\overline{\rm{R}}-\overline{\rm{Ric}}(\nu,\nu)-\frac{1}{2}|h|^{2}.
\end{equation}
Putting (\ref{hei}),(\ref{dw3}) and (\ref{Gauss}) into (\ref{1-form-one}), we get
\begin{eqnarray}\label{dw4}
\int_{M}\sum_{i=1}^{d}|\nabla u_{i}|^{2}&=&\int_{M}(-\frac{1}{2}\overline{\rm{R}}+\overline{\rm{Ric}}(\nu,\nu)+|h|^{2})|\xi|^{2}+\sum_{k=1}^{2}|{\rm II}(e_{k},\xi)|^{2}\,dA\\
&\quad&-\int_{\partial M}\frac{1}{\sin\theta}\,h^{\partial B}(\xi,\xi)+\cot\theta \,h(\xi,\xi)\,ds\nonumber
\end{eqnarray}
Therefore, combining the above computation, we have
\begin{eqnarray}\label{dw5}
\sum_{i=1}^{d} Q(u_{i}, u_{i})&=&\sum_{i=1}^{d}\int_M |\n u_{i}|^2- (|h|^2+\overline{{\rm Ric}}(\nu,\nu))|u_{i}|^2 dA\nonumber\\
&&-\sum_{i=1}^{d} \int_{\p M} \left(\frac{1}{\sin \th}\,h^{\p B}(\bar \nu, \bar \nu)+\cot \th\, h(\mu, \mu)\right)|u_{i}|^2 ds\nonumber\\
&=&\int_{M}-\frac{1}{2}\overline{\rm{R}}|\xi|^{2}+\sum_{k=1}^{2}|{\rm II}(e_{k},\xi)|^{2}dA\nonumber\\
&&-\int_{\partial M}\frac{1}{\sin\theta}(h^{\partial B}(\xi,\xi)+h^{\partial B}(\bar{\nu},\bar{\nu})|\xi|^{2})+\cot\theta\,(h(\xi,\xi)+h(\mu,\mu)|\xi|^{2})ds\nonumber\\
&=&\int_{M}-\frac{1}{2}\overline{\rm{R}}|\xi|^{2}+\sum_{k=1}^{2}|{\rm II}(e_{k},\xi)|^{2}dA-\int_{\partial M}(\frac{1}{\sin\theta}H^{\partial B}+\cot\theta \, H)|\xi|^{2}ds\nonumber
\end{eqnarray}
Since $M^{2}$ is a minimal surface in $\bar{M}^{3}$, we complete the proof.
\end{proof}

\noindent{\textbf{Proof of Theorem \ref{thm-7-p}}.}\ Assume the number of  negative eigenvalues of \eqref{22} is $k$.
Define
\begin{eqnarray*}
\Psi: \mathcal{H}^1_T(M, \p M)&&\to \rr^{dk}
\\ \xi^{b}&&\mapsto  \int_{ M} u_{i}\vp_q\, dA
\end{eqnarray*}
where $u_{i}$ are functions defined via $\xi$ by \eqref{ui} and $q$ ranges from $1$ to $k$ and $i$ ranges from $1$ to $d$.\\
First, we claim that ${\rm Ker}(\Psi)=\{0\}.$\\
For any non-zero 1-form $\xi^{b}\in{\rm Ker}{\Psi}$, this means that $u_{i}\in V_{k}^\perp$, thus
\begin{equation*}
  Q(u_{i},u_{i})\geq\lambda_{k+1}\int_{ M}u_{i}^{2}dA\geq0\qquad \text{for\,\,all}\,\,1\leq i\leq d.
\end{equation*}
by the variational characterization of the eigenvalue problem (\ref{22}). In particular,
by (\ref{1-form-sum}) and hypothesis (\ref{condi-7}), we have
\begin{equation*}
  0\leq \sum_{i=1}^{d}Q(u_{i},u_{i})<0.
\end{equation*}
Hence $\Psi$ has trivial kernel.\\
Next, we observe that $\Psi$ is a linear operator and
 $$\mathcal{H}^1_T(M, \p M)/{\rm Ker}(\Psi)\cong {\rm Im}(\Psi)\subset \rr^{dk}.$$
Thus $${\rm dim}(\mathcal{H}^1_T(M, \p M))\le {\rm dim}({\rm Ker}(\Psi))+dk=dk.$$
By \cite{ACS2} Theorem 3 and ${\rm dim} M=2$, we have
\begin{equation*}
\mathcal{H}_{T}^{1}(M,\partial M)\simeq H_{1}(M,\partial M;\mathbb{R}).
\end{equation*}
Recall from Proposition \ref{morse-index} that ${\rm Ind}(M)\ge k-1$. We finish the proof.
\qed

\

\appendix

\section{Second variational formula}\label{appendix}
In this appendix, we prove Proposition \ref{second-var}, namely, the second variation formula of area functional (\ref{second}) under admissible wetting-area-preserving variations. For simplicity, we use $\<\cdot, \cdot\>$ to denote all the inner products in the following computation. The computation is very close to the one by Ros-Souam \cite{RS}. \\
Firstly, applying the admissible condition, (\ref{mu}) and (\ref{nu}), we get
\begin{equation*}
 0=\langle Y,\bar{N}\rangle=\langle Y, \sin\theta\,\mu-\cos\theta\,\,\nu\rangle=\sin\theta\,\langle Y,\mu\rangle-\varphi \cos\theta\,\quad \text{on}\,\,\partial M,
\end{equation*}
Therefore,
\begin{equation}\label{Y-mu}
  \langle Y,\mu\rangle=\varphi\, \cot\theta\, \quad \text{on}\,\,\partial M.
\end{equation}
So we can define
\begin{equation}\label{Y-1}
  Y=Y_{0}+\varphi\nu\triangleq Y_{1}+\cot\theta\, \varphi\mu+\varphi\nu
\end{equation}
where $Y_{1}$ denotes the tangent part of $Y$ to $\partial M$.\\
On the other hand, from $$\bar{\nu}=\cos\theta\, \mu + \sin\theta\,\nu ,$$ we see $Y$ can be also expressed as follows
\begin{equation}\label{Y-2}
  Y=Y_{1}+\frac{\varphi}{\sin\theta}(\cos\theta\,\mu+\sin\theta\nu)=Y_{1}+\frac{\varphi}{\sin\theta}\bar{\nu}.
\end{equation}

We use a prime to denote the time derivative at $t=0$ in the following.
\begin{prop} \cite{RS}\label{N'-and-v'}\
Let $\tilde{\nabla}$ denote the gradient on $\partial M$ for the metric induced by $x$ and $Y_{0}$ (resp. $Y_{1}$) the tangent part of $Y$ to $M$ (resp. to $\partial M$).Let also $S_{0}$, $S_{1}$ and $S_{2}$ denote respectively the shape operator of $M$ in $\overline{M}$ with respect to $\nu$,of $\partial M$ in $M$ with respect to $\mu$ and of $\partial M$ in $\partial B$ with respect to $\bar{\nu}$. Then
\begin{itemize}
\item[(1)] $\nu'=S_{0}(Y_{0})-\nabla \varphi.$
\item[(2)] $\mu'=-(h(Y_{0},\mu)+\nabla_{\mu}\varphi)\nu-\varphi S_{0}(\mu)+\varphi h(\mu, \mu)\mu+S_{1}(Y_{1})-\cot\theta\,\tilde{\nabla}\varphi.$
\item[(3)] $\bar{\nu}'=-h^{\partial B}(Y,\bar{\nu})\bar{N}+S_{2}(Y_{1})-\frac{1}{\sin\theta}\tilde{\nabla}\varphi.$
\end{itemize}
\end{prop}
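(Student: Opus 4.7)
The guiding idea is that each of $\nu$, $\mu$, $\bar\nu$ is characterized by a short list of algebraic constraints along the variation (unit length, plus being normal to the right subspace of $TM(t)$, $T\partial M(t)$, or $T\partial B$), so each component of the time derivative can be recovered by differentiating one of those constraints at $t=0$ and using the appropriate Gauss formula to identify the result with the shape operators $S_0$, $S_1$, $S_2$ and the gradient of $\varphi$.

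I will start with (1), since (2) and (3) both feed off it. The normal $\nu(t)$ to $x(t,M)$ satisfies $|\nu|^{2}=1$ and $\langle \nu,x_{*}e_{i}\rangle=0$ for any coordinate field $e_{i}$ on $M$. The first identity differentiates to $\langle \nu',\nu\rangle=0$, so $\nu'$ is tangent to $M$ at $t=0$. For the second, the commutation $[\partial_{t},e_{i}]=0$ gives $\frac{D}{\partial t}x_{*}e_{i}=\bar\nabla_{e_{i}}Y$, hence
\begin{equation*}
\langle \nu',e_{i}\rangle=-\langle \nu,\bar\nabla_{e_{i}}Y\rangle
=-\langle \nu,\bar\nabla_{e_{i}}Y_{0}\rangle-e_{i}(\varphi).
\end{equation*}
The Gauss formula for $M\subset\bar M$ applied to $Y_{0}$ identifies the first term as $h(e_{i},Y_{0})$ (up to the sign convention fixed by $h(X,Y)=\bar g(\bar\nabla_{X}\nu,Y)$), giving (1).

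For (2), $\mu(t)$ is characterized by: (i) $|\mu|=1$; (ii) $\langle \mu,\nu\rangle=0$; (iii) $\langle \mu,x_{*}e_{\alpha}\rangle=0$ for each tangent frame $\{e_{\alpha}\}$ of $\partial M$; together with outward orientation. Differentiating (i) kills the $\mu$-component of $\mu'$; differentiating (ii) and inserting (1) yields the $\nu$-component
\begin{equation*}
\langle \mu',\nu\rangle=-\langle \mu,S_{0}(Y_{0})-\nabla\varphi\rangle,
\end{equation*}
which supplies the first term of the stated formula. The tangential-to-$\partial M$ components come from differentiating (iii); here one must expand $Y$ in the boundary-adapted decomposition $Y=Y_{1}+\cot\theta\,\varphi\,\mu+\varphi\,\nu$ of \eqref{Y-1} and use $\bar\nabla_{e_{\alpha}}Y_{1}$, $\bar\nabla_{e_{\alpha}}\mu$, $\bar\nabla_{e_{\alpha}}\nu$ with both Gauss formulas (for $M\subset\bar M$ and for $\partial M\subset M$), producing on the one hand $S_{1}(Y_{1})$, on the other hand the $\tilde\nabla\varphi$ term with coefficient $\cot\theta$, plus the terms $-\varphi S_{0}(\mu)+\varphi h(\mu,\mu)\mu$ from differentiating $\varphi\nu$ and $\cot\theta\,\varphi\,\mu$ and projecting suitably.

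For (3) the argument is parallel, but now $\bar\nu(t)$ lives in $T\partial B$, so the defining constraints are $|\bar\nu|=1$, $\langle\bar\nu,\bar N\rangle=0$, and $\langle\bar\nu,x_{*}e_{\alpha}\rangle=0$ along $\partial M$. Differentiation of the first two gives the $\bar\nu$- and $\bar N$-components, the latter producing $-h^{\partial B}(Y,\bar\nu)\bar N$ via the second fundamental form of $\partial B\subset\bar M$. The third constraint, treated as in (2) but with $Y$ decomposed using $\bar\nu$ (i.e.\ via \eqref{Y-2}, $Y=Y_{1}+\tfrac{\varphi}{\sin\theta}\bar\nu$), produces $S_{2}(Y_{1})$ from the Gauss formula for $\partial M\subset\partial B$, plus the $-\tfrac{1}{\sin\theta}\tilde\nabla\varphi$ term from differentiating the $\bar\nu$-coefficient. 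I expect the main technical care to be in (2): one has to distinguish the two ambient covariant derivatives at play and correctly package the mixed terms $\varphi\,\bar\nabla_{e_{\alpha}}\nu$ and $\cot\theta\,\varphi\,\bar\nabla_{e_{\alpha}}\mu$ so that the $S_{0}(\mu)$ piece (pointing generically out of $TM$) and the $h(\mu,\mu)\mu$ piece (from the tangential projection back into $TM$) emerge with the correct signs.
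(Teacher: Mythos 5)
Your proposal is correct and follows essentially the same route as the paper's proof: differentiate the orthonormality and incidence relations for $\nu$, $\mu$, $\bar\nu$, use $[\,\partial_t, x_*e\,]=0$ to replace frame derivatives by $\bar\nabla_{e}Y$, and decompose $Y$ via \eqref{Y-1} and \eqref{Y-2} so the tangential pieces are identified with $S_0$, $S_1$, $S_2$ and $\tilde\nabla\varphi$. Note only that the $\nu$-component you obtain, $(-h(Y_0,\mu)+\nabla_\mu\varphi)\nu$, coincides with what the paper's own computation (and its later use in \eqref{second-terms}) gives, the sign in the printed statement of item (2) being a typo.
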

\begin{proof}
to prove (1), let $\{e_{i}\}_{i=1}^{n}$ be an orthonormal basis of $T_{p}M$ for some $p\in M$. Put $e_{i}(t)=(x(t,\cdot))_{\ast}(e_{i})$, then using the fact $\langle e_{i}(t),\nu(t)\rangle=0$ and $[e_{i}(t), Y(t)]=0$, we have
\begin{alignat}{2}
\nu'&=\sum_{i=1}^{n}\langle \nu', e_{i}\rangle e_{i}=-\sum_{i=1}^{n}\langle\nu, e_{i}'\rangle e_{i}\nonumber\\
&=-\sum_{i=1}^{n}\langle\nu, \bar \n_{e_{i}}Y\rangle e_{i}=-\sum_{i=1}^{n}\langle\nu,\bar \n_{e_{i}}(Y_{0}+\varphi\nu) \rangle e_{i}\nonumber\\
&=\sum_{i=1}^{n}\langle S_{0}(Y_{0}), e_{i}\rangle e_{i}-\sum_{i=1}^{n}d\varphi(e_{i})e_{i}\nonumber\\
&=S_{0}(Y_{0})-\nabla \varphi\nonumber
\end{alignat}
As a consequence of (1) we get
\begin{equation}\label{nu-n}
  \langle\mu',\nu\rangle=-\langle\mu,\nu'\rangle=-h(Y_{0}, \mu)+\nabla_{\mu}\varphi.
\end{equation}
Let now $\{T_\a\}_{\a=1}^{n-1}$ be an orthonormal basis of $T_{p}(\partial M)$ for some $p\in \partial M$. As before, put $T_\a(t)=(x(t,\cdot))_{\ast}(T_\a)$, then we can use (\ref{Y-1}) and $[T_\a(t), Y(t)]=0$
\begin{alignat}{2}
\langle\mu', T_\a\rangle&=-\langle\mu,T'_{\a}\rangle=-\langle\mu,\bar \n_{T_\a}Y\rangle=-\langle\mu, \bar \n_{T_\a}(Y_{1}+\cot\theta\, \varphi\mu+\varphi\nu)\rangle\label{nu-n2}\\
&=-\langle\mu, \bar \n_{T_\a}Y_{1}\rangle-\cot\theta\, d\varphi(T_\a)-\varphi\langle\mu, \bar \n_{T_\a}\nu\rangle\nonumber
\end{alignat}
Thanks to (\ref{nu-n}) and (\ref{nu-n2}), we have
\begin{alignat}{2}
\mu'&=\langle\mu',\mu\rangle\mu+\langle\mu', \nu\rangle\nu+\sum_{\a=1}^{n-1}\langle\mu', T_\a\rangle T_\a\label{second-term}\\
&=(-h(Y_{0}, \mu)+\nabla_{\mu}\varphi)\nu-\sum_{\a=1}^{n-1}\langle\mu, \bar \n_{T_\a}Y_{1}\rangle T_\a-\cot\theta\,\sum_{\a=1}^{n-1}d\varphi(T_\a)T_\a-\varphi\sum_{\a=1}^{n-1}\langle S_{0}(\mu), T_\a\rangle T_\a\nonumber\\
&=(-h(Y_{0}, \mu)+\nabla_{\mu}\varphi)\nu+S_{1}(Y_{1})-\cot\theta\,\tilde{\nabla}\varphi-\varphi(S_{0}(\mu)-h(\mu,\mu)\mu)\nonumber
\end{alignat}
The formula (2) follows from (\ref{second-term}).\\
To prove (3), we use $[T_\a(t), Y(t)]=0$ again and (\ref{Y-2})
\begin{alignat}{2}
\langle\bar{\nu}',T_\a\rangle&=-\langle\bar{\nu},T'_{\a}\rangle=-\langle\bar{\nu},\bar \n_{T_\a}Y\rangle\label{third-term}\\
&=-\langle\bar{\nu},\bar \n_{T_\a}Y_{1}\rangle-\frac{1}{\sin\theta}\,d\varphi(T_\a)\nonumber\\
&=\langle S_{2}(Y_{1}),T_\a\rangle-\frac{1}{\sin\theta}\,d\varphi(T_\a)\nonumber
\end{alignat}
Therefore, the formula (3) now follows from (\ref{third-term}) and the fact $\langle\bar{\nu}',\bar{N}\rangle=-h^{\partial B}(Y,\bar{\nu}).$
\end{proof}

\begin{prop}\label{N-and-n}\
\begin{equation*}
\langle S_{1}(Y_{1}),Y_{1}\rangle-\cot\theta\,\langle\tilde{\nabla}\varphi,Y_{1}\rangle=\cos\theta\,\,\langle Y, \bar{\nu}'\rangle+\sin\theta \,h^{\partial B}(Y_{1},Y_{1})\quad \text{along} \quad\partial M.\quad
\end{equation*}
\end{prop}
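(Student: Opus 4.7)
The plan is to compute $\langle Y,\bar\nu'\rangle$ from Proposition \ref{N'-and-v'}(3), then use the relation $\mu=\sin\theta\,\bar N+\cos\theta\,\bar\nu$ from \eqref{mu} to express the shape operator $S_1$ in terms of $S_2$ and $h^{\partial B}$, and finally combine the two identities.

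First I would substitute $Y=Y_1+\frac{\varphi}{\sin\theta}\bar\nu$ from \eqref{Y-2} into the formula
\[
\bar\nu' = -h^{\partial B}(Y,\bar\nu)\,\bar N + S_2(Y_1) - \tfrac{1}{\sin\theta}\,\tilde{\nabla}\varphi
\]
and take the inner product with $Y$. Since the variation is admissible we have $Y\in T(\partial B)$, so $\langle Y,\bar N\rangle=0$ and the $\bar N$-term drops out. Moreover $S_2(Y_1)$ and $\tilde\nabla\varphi$ both lie in $T(\partial M)$, hence are orthogonal to $\bar\nu$, so the $\frac{\varphi}{\sin\theta}\bar\nu$-piece of $Y$ contributes nothing. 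This yields
\[
\langle Y,\bar\nu'\rangle = \langle S_2(Y_1),Y_1\rangle - \tfrac{1}{\sin\theta}\langle\tilde\nabla\varphi,Y_1\rangle.
\]

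Next I would relate $S_1$ and $S_2$. For $X,Y_1\in T(\partial M)$, we have $\langle S_1(Y_1),X\rangle=\langle\bar\nabla_{Y_1}\mu,X\rangle$ (the $\nu$-component of $\bar\nabla_{Y_1}\mu$ is irrelevant since $X\in TM$), and similarly $\langle S_2(Y_1),X\rangle=\langle\bar\nabla_{Y_1}\bar\nu,X\rangle$ (the $\bar N$-component is killed by $X\in T(\partial B)$). Using $\mu=\sin\theta\,\bar N+\cos\theta\,\bar\nu$ with $\theta$ constant and recalling $h^{\partial B}(Y_1,X)=\langle\bar\nabla_{Y_1}\bar N,X\rangle$, this gives
\[
\langle S_1(Y_1),Y_1\rangle = \sin\theta\,h^{\partial B}(Y_1,Y_1) + \cos\theta\,\langle S_2(Y_1),Y_1\rangle.
\]

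Finally I would multiply the first identity by $\cos\theta$ and use the second to eliminate $\cos\theta\,\langle S_2(Y_1),Y_1\rangle$, which rearranges exactly to the claim
\[
\langle S_1(Y_1),Y_1\rangle - \cot\theta\,\langle\tilde{\nabla}\varphi,Y_1\rangle = \cos\theta\,\langle Y,\bar\nu'\rangle + \sin\theta\,h^{\partial B}(Y_1,Y_1).
\]
The only subtle point is the book-keeping of which vectors live in which normal bundle (normal to $M$ in $\bar M$, to $\partial M$ in $M$, to $\partial B$ in $\bar M$, and to $\partial M$ in $\partial B$); once the orthogonality relations $\langle\bar\nu,\bar N\rangle=0$, $\langle Y,\bar N\rangle=0$ along $\partial M$, and $S_2(Y_1),\tilde{\nabla}\varphi\in T(\partial M)\perp\bar\nu$ are used systematically, the computation is routine.
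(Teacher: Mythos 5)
Your argument is correct and is essentially the paper's own proof: the paper likewise pairs Proposition \ref{N'-and-v'}(3) with $Y$, drops the $\bar N$-term via $\langle Y,\bar N\rangle=0$, and converts $\cos\theta\,\langle S_2(Y_1),Y_1\rangle$ into $\langle S_1(Y_1),Y_1\rangle-\sin\theta\,h^{\partial B}(Y_1,Y_1)$ by writing $\cos\theta\,\bar\nu=\mu-\sin\theta\,\bar N$, which is exactly your $S_1$--$S_2$ relation. The only difference is presentational (you isolate the two identities before combining, the paper runs them in one chain), so there is nothing to add.
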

\begin{proof}
By Proposition \ref{N'-and-v'} (3) and the fact $\<Y,\bar N\>=0$, we have
\begin{alignat}{2}
\cos\theta\,\,\langle Y,\bar{\nu}'\rangle&=\cos\theta\,\langle Y,-h^{\partial B}(Y,\bar{\nu})\bar{N}+S_{2}(Y_{1})-\frac{1}{\sin\theta}\tilde{\nabla}\varphi\rangle\\
&=\langle D_{Y_{1}}(\cos\theta\,\bar{\nu}), Y_{1}\rangle-\cot\theta\,\langle\tilde{\nabla}\varphi,Y_{1}\rangle\nonumber\\
&=\langle D_{Y_{1}}(\mu-\sin\theta\bar{N}),Y_{1}\rangle-\cot\theta\,\langle\tilde{\nabla}\varphi,Y_{1}\rangle\nonumber\\
&=\langle S_{1}(Y_{1}),Y_{1}\rangle-\sin\theta\,h^{\partial B}(Y_{1},Y_{1})-\cot\theta\,\langle\tilde{\nabla}\varphi,Y_{1}\rangle.\nonumber
\end{alignat}
\end{proof}

Now we are ready to prove Proposition \ref{second-var}.

\noindent{\bf Proof of Proposition \ref{second-var}.}
Firstly, from (\ref{first-var}), we have
\begin{alignat*}{2}
A''(0)=\int_{M}&H'\varphi+H\vp'\, dA+\int_{\partial M}\langle Y', \mu\rangle+\langle Y, \mu'\rangle ds+\int_{\partial M}\langle Y,\mu\rangle\frac{d}{dt}\Big|_{t=0}ds_{t}.
\end{alignat*}

Notice that
$A'(0)=0$ for any wetting-area-preserving variations if and only if $H=0$ in $M$ and $\theta$ is constant. Moreover, we have the well known formula (see \cite{H.R})
\begin{equation*}
  H'=-(\Delta \vp+|h|^{2}\vp+\overline{\rm Ric}(\nu,\nu)\vp).
\end{equation*}
It follows that
\begin{alignat}{2}\label{variation-formula}
A''(0)=-\int_{M}\varphi(\Delta \vp+|h|^{2}\vp+\overline{\rm Ric}(\nu,\nu)\vp)\, dA+\int_{\partial M}\langle Y', \mu\rangle+\langle Y, \mu'\rangle ds+\int_{\partial M}\langle Y,\mu\rangle\frac{d}{dt}\Big|_{t=0}ds_{t}.
\end{alignat}
So to prove the formula for $A''(0)$ we need to compute
\begin{equation}\label{boundary-term}
\int_{\partial M}\langle Y', \mu\rangle+\langle Y, \mu'\rangle ds+\int_{\partial M}\langle Y,\mu\rangle\frac{d}{dt}\Big|_{t=0}ds_{t}.
\end{equation}
Firstly, we compute the first term of (\ref{boundary-term}).
By using (\ref{mu}), we have
\begin{alignat}{2}
\langle Y',\mu\rangle&=\langle Y',\sin\theta\, \bar{N}+\cos\theta\, \bar{\nu}\rangle\label{Y'}\\
&=\sin\theta\, \langle D_{Y}Y,\bar{N}\rangle+\cos\theta\,\langle Y',\bar{\nu}\rangle\nonumber\\
&=-\sin\theta \,  h^{\partial B}(Y,Y)+\cos\theta\,\langle Y',\bar{\nu}\rangle.\nonumber
\end{alignat}
By (\ref{Y-2}), we obtain that
\begin{equation}\label{pi}
   h^{\partial B}(Y,Y)= h^{\partial B}(Y_{1},Y_{1})+\frac{2\varphi}{\sin\theta}\, h^{\partial B}(Y_{1},\bar{\nu})+\frac{\varphi^{2}}{\sin^{2}\theta}\,  h^{\partial B}(\bar{\nu},\bar{\nu}).
\end{equation}
Applying (\ref{Y'}) and (\ref{pi}), we have
\begin{alignat}{2}
\int_{\partial M}\langle Y', \mu\rangle ds&=-\sin\theta\int_{\partial M}h^{\partial B}(Y,Y)ds+\cos\theta\,\int_{\partial M}\langle Y',\bar{\nu}\rangle ds\label{1.5-term}\\
&=-\sin\theta\, \int_{\partial M}h^{\partial B}(Y_{1},Y_{1})+\frac{2\varphi}{\sin\theta} \, h^{\partial B}(Y_{1},\bar{\nu})+\frac{\varphi^{2}}{\sin^{2}\theta}\,  h^{\partial B}(\bar{\nu},\bar{\nu})ds+\cos\theta\,\int_{\partial M}\langle Y',\bar{\nu}\rangle ds.\nonumber
\end{alignat}
On the other hand, using (\ref{mu}) and (\ref{nu}), we get
\begin{alignat}{2}
h(Y_{1},\mu)=\langle\mu,\bar\n_{Y_{1}}\nu\rangle&=\langle \sin\theta\bar{N}+\cos\theta\,\bar{\nu}, \bar\n_{Y_{1}}(-\cos\theta\,\bar{N}+\sin\theta\bar{\nu})\rangle\label{first-term1}\\
&=-\langle\bar{\nu}, \bar\n_{Y_{1}}\bar{N}\rangle=-h^{\partial B}(Y_{1}, \bar{\nu})\nonumber
\end{alignat}
Therefore, inserting (\ref{first-term1}) into (\ref{1.5-term}), we get the first term of (\ref{boundary-term})
\begin{alignat}{2}
\int_{\partial M}&\langle Y', \mu\rangle ds=-\sin\theta\int_{\partial M}h^{\partial B}(Y,Y)\,ds+\cos\theta\,\int_{\partial M}\langle Y',\bar{\nu}\rangle \,ds\label{first-term}\\
&=-\sin\theta\int_{\partial M}h^{\partial B}(Y_{1},Y_{1})-\frac{2\varphi}{\sin\theta} \,h(Y_{1},\mu)+\frac{\varphi^{2}}{\sin^{2}\theta}  \,h^{\partial B}(\bar{\nu},\bar{\nu})ds+\cos\theta\int_{\partial M}\langle Y',\bar{\nu}\rangle  \,ds\nonumber
\end{alignat}

\noindent By Proposition \ref{N'-and-v'} (2), Proposition \ref{N-and-n} and (\ref{Y-1}), we can compute the second term of (\ref{boundary-term}) as follow
\begin{alignat}{2}
&\int_{\partial M}\langle Y, \mu'\rangle\, ds\label{second-terms}\\
&=\int_{\partial M}(-h(Y_{0},\mu)+\nabla_{\mu}\varphi)\langle Y,\nu\rangle-\varphi\langle S_{0}(\mu), Y\rangle+\varphi h(\mu,\mu)\langle Y,\mu\rangle+\langle S_{1}(Y_{1}),Y\rangle-\cot\theta\,\langle\tilde{\nabla}\varphi, Y\rangle \,ds\nonumber\\
&=\int_{\partial M}\varphi\nabla_{\mu}\varphi-2h(Y_{0},\mu)\varphi+\varphi h(\mu,\mu)\langle Y_{1}+\cot\theta\, \varphi\mu+\varphi\nu,\mu\rangle+\langle S_{1}(Y_{1}),Y\rangle-\cot\theta\,\langle\tilde{\nabla}\varphi, Y\rangle\, ds\nonumber\\
&=\int_{\partial M}\varphi\nabla_{\mu}\varphi-2h(Y_{0},\mu)\varphi+\cot\theta\, \varphi^{2}h(\mu,\mu)+\langle S_{1}(Y_{1}),Y\rangle-\cot\theta\,\langle\tilde{\nabla}\varphi, Y\rangle \,ds\nonumber\\
&=\int_{\partial M}\varphi\nabla_{\mu}\varphi-2h(Y_{1}+\cot\theta\, \varphi\mu,\mu)\varphi+\cot\theta\, \varphi^{2}h(\mu,\mu)+\cos\theta\,\langle Y,\bar{\nu}'\rangle+\sin\theta\,h^{\partial B}(Y_{1},Y_{1})\,ds\nonumber\\
&=\int_{\partial M}\varphi\nabla_{\mu}\varphi-2h(Y_{1}, \mu)\varphi-\cot\theta\, \varphi^{2}h(\mu,\mu)+\cos\theta\,\langle Y,\bar{\nu}'\rangle+\sin\theta\,h^{\partial B}(Y_{1},Y_{1})\,ds.\nonumber
\end{alignat}

Next, we compute the third boundary term of (\ref{boundary-term}) by using (\ref{mu})
\begin{equation}\label{third-terms}
\int_{\partial M}\langle Y,\mu\rangle\frac{d}{dt}\Big|_{t=0}ds_{t}=\int_{\partial M}\langle Y, \sin\theta \bar{N}+\cos\theta\,\bar{\nu}\rangle\frac{d}{dt}\Big|_{t=0}ds_{t}=\cos\theta\,\int_{\partial M}\langle Y,\bar{\nu}\rangle\frac{d}{dt}\Big|_{t=0}ds_{t}.
\end{equation}

Therefore, putting (\ref{first-term}), (\ref{second-terms}), (\ref{third-terms}) into (\ref{variation-formula}), we get
\begin{alignat}{2}
 A''(0)&=-\int_{M}\varphi(\Delta \varphi+(|h|^{2}+\overline{\rm Ric}(\nu,\nu))\varphi)dA+\int_{\partial M}\varphi(\nabla_{\mu}\varphi-q\varphi)ds+\cos\theta\,\frac{d}{dt}\Big|_{t=0}\left(\int_{\partial M}\langle Y, \bar{\nu}\rangle ds_{t}\right)\nonumber\label{variation-formula}\\
&=-\int_{M}\varphi(\Delta \varphi+(|h|^{2}+\overline{\rm Ric}(\nu,\nu))\varphi)dA+\int_{\partial M}\varphi(\nabla_{\mu}\varphi-q\varphi)ds\nonumber
\end{alignat}
where in the last equality we have used the wetting-area-preserving condition
$$A_W'(0)=\frac{d}{dt}\Big|_{t=0}\left(\int_{\partial M}\langle Y, \bar{\nu}\rangle ds_{t}\right)=0$$
and the expression \eqref{q} for $q$.
The proof is completed.
\qed

\

\end{document}